\newcommand{\R}{\mathbb{R}}
\newcommand{\var}{\mathbf{var\,}}
\newcommand{\E}{\mathbf{E}}
\renewcommand{\P}{\mathbf{P}}
\newcommand{\I}{\mathbbm{1}}
\newcommand{\D}{\mathbb{D}}
\newcommand{\one}[1]{\mathbbm{1}_{\{#1\}}}
\renewcommand{\d}{\mathrm{d}}
\newcommand{\stcomp}{\mathsf{c}}
\newcommand{\cl}{-}
\newcommand{\lip}{\mathrm{Lip}}
\newcommand{\chck}{{\color{red} [check again] }}
\renewcommand{\chck}{}
\newcommand{\rvout}[1]{{\color{red}{\sout{#1}}}}
\renewcommand{\rvout}[1]{}
\newcommand{\cf}[1]{$ $\\\fbox{\begin{minipage}{0.95\linewidth}{\large\textup{\texttt{[SUPPLEMENT] !!!THESE BOXES ARE ONLY FOR THE PURPOSE OF FACILITATING THE INSPECTION AND WILL BE DELETED BEFORE WE DISTRIBUTE THE PAPER!!!\newline}}\smallskip} #1\end{minipage}}\\} % print cf in box
\renewcommand{\cf}[1]{} % omit cf
\newcommand\hcancel[2][red]{\setbox0=\hbox{$#2$}%
\rlap{\raisebox{.25\ht0}{\textcolor{#1}{\rule{\wd0}{1pt}}}}#2}
\renewcommand\hcancel[2][red]{}
\newcommand{\vertiii}[1]{{\left\vert\kern-0.25ex\left\vert\kern-0.25ex\left\vert #1
    \right\vert\kern-0.25ex\right\vert\kern-0.25ex\right\vert}}
\numberwithin{equation}{section}
\theoremstyle{plain}
\newtheorem{definition}{Definition}
\newtheorem{theorem}{Theorem}[section]
\newtheorem{lemma}{Lemma}[section]
\newtheorem{proposition}{Proposition}[section]
\newtheorem{corollary}{Corollary}[section]
\newtheorem{remark}{Remark}[section]
\numberwithin{equation}{section}
\theoremstyle{plain}
\begin{document}

\begin{frontmatter}
\title{Sample path large deviations for L\'evy processes\\ and random walks with Weibull increments}
\runtitle{Sample path large deviations: Weibull case}
%\thankstext{T1}{Footnote to the title with the ``thankstext'' command.}

\begin{aug}
\author{\fnms{Mihail} \snm{Bazhba}
\thanksref{m1}
\ead[label=e1]{bazhba@cwi.nl}},
\author{\fnms{Jose} \snm{Blanchet}
\thanksref{m2}
\ead[label=e2]{jblanche@stanford.edu}},
\author{\fnms{Chang-Han} \snm{Rhee}
\thanksref{m3}
\ead[label=e3]{chang-han.rhee@northwestern.edu}}
\and
\author{\fnms{Bert} \snm{Zwart}
\thanksref{m1}
\ead[label=e4]{bert.zwart@cwi.nl}
%\ead[label=u1,url]{http://www.foo.com}
}

%\thankstext{t1}{Some comment}
%\thankstext{t2}{First supporter of the project}
%\thankstext{t3}{Second supporter of the project}
\runauthor{Bazhba et al.}

\affiliation{Centrum Wiskunde \& Informatica \thanksmark{m1}, Stanford University \thanksmark{m2}, and Northwestern University \thanksmark{m3}}

\address{Science Park 123, Amsterdam, The Netherlands\\
%Usually a few lines long\\
\printead{e1,e4}\\
\phantom{E-mail:\ }
%\printead*{}
}

\address{475 Via Ortega, Stanford, California, USA\\
%Usually a few lines long\\
%Usually a few lines long\\
\printead{e2}\\
%\printead{u1}
}

\address{2145 Sheridan Road, Evanston, Illinois, USA\\
%Usually a few lines long\\
\printead{e3}\\
\phantom{E-mail:\ }
%\printead*{}
}

\end{aug}

\begin{abstract}
We study sample path large deviations for L\'evy processes and random walks with heavy-tailed jump-size distributions that are of Weibull type.
Our main results include an extended form of an LDP (large deviations principle) in the $J_1$ topology, and a full LDP in the $M_1'$ topology. The rate function can be represented as the solution to a quasi-variational problem.
The sharpness and applicability of these results are illustrated by a counterexample proving the nonexistence of a full LDP in the $J_1$ topology, and by an application to a first passage problem.
%and an application to the buildup of a large queue length in a queue with multiple servers.
\end{abstract}

\begin{keyword}[class=MSC]
\kwd[Primary ]{60F10}
%\kwd{60G17}
\kwd[; secondary ]{60G17}
\end{keyword}

\begin{keyword}
\kwd{sample path large deviations}
\kwd{L\'evy processes}
\kwd{random walks}
\kwd{heavy tails}
\end{keyword}

\end{frontmatter}

\section{Introduction}

In this paper, we develop sample path large deviations for
 L\'evy processes and random walks, assuming the jump sizes
have a semi-exponential distribution. Specifically, let $X(t),t\geq 0,$ be a
centered L\'evy process with positive jumps and L\'evy measure $\nu$ { which has non-negative support}. Assume
that $-\log \nu [x,\infty )$ is regularly varying of index $\alpha \in (0,1)$
and define $\bar X_n= \{\bar X_n(t), t\in [0,1]\}$, with $\bar X_n(t) =
X(nt)/n, t\geq 0$. We are interested in large deviations of $\bar X_n$.

The investigation of tail estimates of the one-dimensional distributions of $%
\bar X_n$ (or random walks with heavy-tailed step size distribution) was
initiated in \cite{Nagaev69,Nagaev77}. The state of the art of such results
is well summarized in \cite{BorovkovBorovkov, DiekerDenisovShneer,
EmbrechtsKluppelbergMikosch97, FKZ}. In particular, \cite%
{DiekerDenisovShneer} describe in detail how fast $x$ needs to grow with $n$
for the asymptotic relation
\begin{equation}  \label{onebigjump}
\P( X(n) > x) = n \P(X(1)>x)(1+o(1))
\end{equation}
to hold, as $n\rightarrow\infty$. If (\ref{onebigjump}) is valid, the
so-called \emph{principle of one big jump} is said to hold. It turns out
that, if $x$ increases linearly with $n$, this principle holds if $\alpha<1/2$ and
does not hold if $\alpha>1/2$, and the asymptotic behavior of $\P( X(n) > x)
$ becomes more complicated. When studying more general functionals of $X$ it
becomes natural to consider logarithmic asymptotics, as is common in large
deviations theory, cf.\ \cite{DemboZeitouni, Gantert1998, Gantert14}.

The study of large deviations of sample paths of processes with Weibullian
increments is relatively limited. The only paper we are aware of is \cite%
{Gantert1998}, where the inverse contraction principle is applied to obtain
a large deviation principle in the $L_1$ topology. As noted in \cite%
{DuffySapozhnikov} this topology is not suitable for many
applications---ideally one would like to work with the $J_1$ topology, see equation~\eqref{eq:skorokhod-J1-metric}.

Let us now describe precisely our results. We first develop an extended LDP
(large deviations principle) in the $J_1$ topology, i.e.\ there exists a rate function $I(\cdot )$
such that
\begin{equation}
\liminf_{n\rightarrow \infty }\frac{\log \P (\bar{X}_{n}\in A)}{%
L(n)n^{\alpha }}\geq -\inf_{x\in A}I(x).
\end{equation}%
if $A$ is open, and
\begin{equation}
\limsup_{n\rightarrow \infty }\frac{\log \P (\bar{X}_{n}\in A)}{%
L(n)n^{\alpha }}\leq -\lim_{\epsilon \downarrow 0}\inf_{x\in A^{\epsilon
}}I(x).
\end{equation}%
if $A$ is closed. Here $A^{\epsilon }=\{x:d(x,A)\leq \epsilon \}$. The rate
function $I$ is given by
\begin{equation*}
I(\xi )=%
\begin{cases}
\sum_{t:\xi (t)\neq \xi (t^{-})}\big(\xi (t)-\xi (t-)\big)^{\alpha } &
\text{if }\xi \in \mathbb{D}_{\leqslant \infty }[0,1], \\
\infty  & otherwise,%
\end{cases}
\end{equation*}%
where $\mathbb{D}_{\leqslant \infty }[0,1]$ is the subspace of $\mathbb{D}[0,1]
$ consisting of non-decreasing pure jump functions vanishing at the origin
and continuous at $1$. (As usual, $\D[0,1]$ is the space of cadlag functions from $[0,1]$ to $\R$.)

The notion of an extended large deviations principle has been introduced by
\cite{borovkov2010large}. We derive this result as follows: we use a
suitable representation for the L\'evy process in terms of Poisson random
measures, allowing us to decompose the process into the contribution
generated by the $k$ largest jumps, and the remainder. The contribution
generated by the $k$ largest jumps is a step function for which we obtain
the large deviations behavior by  Bryc's inverse Varadhan lemma (see e.g.
Theorem 4.4.13 of \citealp{dembo2010large}). The remainder term is tamed by
modifying a concentration bound due to \cite{jelenkovic2003large}.

To combine both estimates we need to consider the $\epsilon$-fattening $A^{\epsilon}$ of
the set $A$, which precludes us from obtaining a full LDP. To show that our
approach cannot be improved, we construct a set $A$ that is closed in the
Skorokhod $J_1$ topology for which the large deviation upper bound does not
hold. In this sense, our extended large deviations principle can be seen as
optimal.
This is in line with the observation made for the regularly varying L\'evy processes and random walks \citep{rheeblanchetzwart2016},
for which the full LDP w.r.t.\ $J_1$ topology in a classical sense is shown to be unobtainable as well.

Following a similar proof strategy, we also derive an extended sample path LDP for random walks in $\D[0,1]$. There are however also some differences.  The distributional representation of our random walk is different from the continuous-time case.  More importantly, the resulting rate function differs at time 1, since the rescaled random walk always has a jump at time 1. We present an exact formulation of this result in Theorem~\ref{thm:random-walk}.

We derive several implications of our extended LDP that facilitate its use
in applications. First of all, if a Lipschitz functional $\phi$ of $\bar X_n$
is chosen for which the function $I_\phi(y) = \inf_{x: \phi(x)=y} I(x)$ is a
good rate function, then $\phi(X_n)$ satisfies an LDP. We illustrate this
procedure by considering an example concerning the probability of ruin for
an insurance company where large claims are reinsured.

A second implication is a sample path LDP in the $M_1^{\prime }$ topology.
%(which informally can be described as the $M_1$ topology modified to allow jumps at time $0$).
We show that the rate function $I$ is good in this topology, allowing us to
conclude $\lim_{\epsilon \downarrow 0} \inf_{x\in A^\epsilon} I(x) =
\inf_{x\in A} I(x)$, if $A$ is closed in the $M_1^{\prime }$ topology. An overview of the $M_1^{\prime}$ topology can be seen in Appendix~\ref{section:appendix-for-M_1p} or in \cite{puhalskii1997functional}. 
The LDP for the $M_1'$ topology in this paper is applied in \cite{BBRZ2} to obtain tail asymptotics of the queue length in the multiple server queue with heavy tailed Weibull service times. 
In that work, we show that the most likely number of big jobs that cause a large queue length is determined by the solution of an $L^\alpha$-norm minimization problem for $\alpha \in (0,1)$.
This result essentially answers a question posed by Sergey Foss at the
Erlang Centennial conference in 2009. For earlier conjectures on this
problem we refer to \cite{Whitt2000}.
% The
%above-mentioned application to the multiple server queue serves as an
%application of this result.

We note that both implications of our extended LDP constitute two complementary tools, and that
the two aforementioned applications are dealt with using precisely one of
these tools. In particular, the functional in the reinsurance example is not
continuous in the $M_1^{\prime }$ topology, and the most likely paths in the
queueing application are discontinuous at time 0, rendering the $J_1$ or $M_1
$ topologies useless.

%In both applications, the simplification of the rate function is done in a problem-specific way. In general, there appears to be an interesting relations with a class of control problems called impulse control problems. A more thorough understanding of this connection is beyond the scope of this paper.

Another application of our results, which will be pursued in detail elsewhere, arises in the large deviations
analysis of  Markov random walks. More precisely, when
studying $\bar{X}_{n}\left( t\right) =\sum_{k=1}^{\left\lfloor nt \right\rfloor }f\left( Y_{k}\right) /n$%
, where $Y_{k},k\geq 0$ is a geometrically ergodic Markov chain and $f\left(
\cdot \right) $ is a given measurable function. Classical large deviations
results pioneered by Donsker and Varadhan on this topic (see, for example,
\citealp{DV_3}) and the more recent treatment in \cite{KM_2}, impose certain
Lyapunov-type assumptions involving the underlying function, $f\left( \cdot
\right) $.

%Another potential application of our results arises in the large deviations
%analysis of the sum of a function of a Markov random walk. Precisely, when
%studying $\bar{X}_{n}\left( 1\right) =\sum_{k=1}^{n}f\left( Y_{k}\right) /n$%
%, where $Y_{k},k\geq 0$ is a geometrically ergodic Markov chain and $f\left(
%\cdot \right) $ is a given measurable function. Classical large deviations
%results pioneered by Donsker and Varadhan on this topic (see, for example,
%\citealp{DV_3}) and the more recent treatment in \cite{KM_2}, impose certain
%Lyapunov-type assumptions involving the underlying function, $f\left( \cdot
%\right) $. These assumptions are not merely technical requirements, they are
%basically necessary to obtain a large deviations theory with a linear (in $n$%
%) speed function (as opposed to sublinear, $L\left( n\right) n^{\alpha }$,
%as we obtain here).

These assumptions are not merely technical requirements, but are needed for
a large deviations theory with a linear (in $n$) speed function (as opposed to sublinear as we obtain here).
Even in simple cases (e.g.\ \citealp{BGM, DuffyMeyn}) the case of unbounded $f\left( \cdot \right) $ can result in a sublinear large
deviations scaling of the type considered here.
For Harris chains, this can be seen by splitting $\bar{X}_{n}\left( \cdot\right) $ into cycles. Each term corresponding to a cycle can be seen as the area under  a curve generated by $f\left( Y_{\cdot}\right)$.
For linear $f$, this results in a contribution towards $\bar{X}_{n}\left( \cdot \right) $ which often is roughly proportional to the square of the cycle.
Hence, the behavior of $\bar{X}_{n}\left( 1\right) $ is close to that of a sum of i.i.d.\ Weibull-type random variables.
To summarize, the main results of this paper can be applied to significantly
extend the classical Donsker-Varadhan theory to unbounded functionals of Markov chains.

This paper is organized as follows. Section 2 introduces notation and
presents extended LDP's. These are complemented in Section 3 by LDP's of
 Lipschitz functionals, LDP's in the $M_1^{\prime }$ topology, and a
counterexample. Section 4 is considering an application to boundary crossing
probabilities with moderate jumps. Additional proofs are presented
in Section 5. The appendix develops further details about the $M_1^{\prime }$
topology that are needed in the body of the paper.
	
%%%%%%%%%%%%%%%%%%%%%%%%%%%%%%%%%%%%%%%%%%%%%%%%%%%%%%%%%%%%%%%%%%%%%%%%%%%%%%%%%%%%%%%%%%
%
%
% Section Examples
%
%
%%%%%%%%%%%%%%%%%%%%%%%%%%%%%%%%%%%%%%%%%%%%%%%%%%%%%%%%%%%%%%%%%%%%%%%%%%%%%%%%%%%%%%%%%%

\section{Sample path LDPs}\label{section:sample-path-ldps}
In this section, we discuss sample path large deviations for L\'evy processes and random walks.
Before presenting the main results, we start with general background.
Let $(\mathbb S, d)$ be a metric space, and $\mathcal T$ denote the topology induced by the metric $d$.
Let $X_n$ be a sequence of $\mathbb S$-valued random variables.
Let $A^{\epsilon} \triangleq \{\xi\in \mathbb S: d(\xi, A) \leq \epsilon \}$ where $d(\xi, A) = \inf_{\zeta \in A} d(\xi, \zeta)$, and $A^{-\epsilon} \triangleq \{\xi\in \mathbb S:\, d(\xi,\zeta) \leq \epsilon \text{ implies }\zeta\in A\}$.
Let $I$ be a non-negative lower semi-continuous function on $\mathbb S$, and 	$a_n$ be a sequence of positive real numbers that tends to infinity as $n\to \infty$.
We say that $X_n$ satisfies the LDP in $(\mathbb S, \mathcal T)$ with speed $a_n$ and the rate function $I$ if
$$
-\inf_{x\in A^\circ}I(x)
\leq \liminf_{n\to\infty} \frac{\log \P(X_n\in A)}{a_n}
\leq
\limsup_{n\to\infty} \frac{\log \P(X_n\in A)}{a_n}
\leq
-\inf_{x\in A^\cl}I(x)
$$
for any measurable set $A$. Here, $A^\circ$ and $A^\cl$ are respectively the interior and the closure of the set $A$.
We say that $X_n$ satisfies the \emph{extended} LDP in $(\mathbb S, \mathcal T)$ with speed $a_n$ and the rate function $I$ if
$$
-\inf_{x\in A^\circ}I(x)
\leq \liminf_{n\to\infty} \frac{\log \P(X_n\in A)}{a_n}
\leq
\limsup_{n\to\infty} \frac{\log \P(X_n\in A)}{a_n}
\leq
-\lim_{\epsilon\to 0}\inf_{x\in A^\epsilon}I(x)
$$
for any measurable set $A$.
The next proposition provides the key framework for proving our main results.

\begin{proposition}\label{theorem:extended-ldp-from-decomposition}
Let $I$ and $I_k$, $k\geq 1$ be rate functions.
Suppose that
for each $n$, $X_n$ has a sequence of approximations $\{Y_n^k\}_{k=1,\ldots}$ such that
\begin{itemize}
\item[(i)]
For each $k$, $Y_n^k$ satisfies the extended LDP in $(\mathbb S, \mathcal T)$ with speed $a_n$ and the rate function $I_k$;

\item[(ii)] For each closed set $F$,
$$
\lim_{k\to\infty} \inf_{x\in F}I_k(x) \geq \inf_{x\in F} I(x);
$$

\item[(iii)]
For each $\delta > 0$ and each open set $G$, there exist $\epsilon>0$ and $K\ge 0$ such that $k\ge K$ implies
\begin{equation*}
\inf_{x \in G^{-\epsilon}}I_k(x) \le \inf_{x \in G}I(x)+\delta;
\end{equation*}

\item[(iv)]
For every $\epsilon>0$ it holds that
\begin{equation}
\lim_{k \rightarrow \infty}\limsup_{n \rightarrow \infty}\frac{1}{a_n}\log\P\left(d(X_n, Y_n^k)>\epsilon\right)= -\infty.
\end{equation}
\end{itemize}
Then, $X_n$ satisfies the extended LDP in $(\mathbb S, \mathcal T)$ with speed $a_n$ and the rate function $I$.
\end{proposition}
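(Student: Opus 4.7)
The plan is to establish the extended LDP lower bound for open sets $G$ and the upper bound for closed sets $F$ separately, since both inequalities depend only on $A^\circ$ and $A^\epsilon$. Two basic ingredients will do almost all of the work: the inequality $\log(p+q)\le \log 2+\max(\log p,\log q)$, and the continuity of $x\mapsto d(x,A)$, which ensures that $F^\epsilon$ is closed for every $F$ and that, when $G$ is open, $G^{-\epsilon}$ contains the open set $\{x:d(x,G^c)>\epsilon\}$.

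For the lower bound I would fix open $G$ with $\inf_G I<\infty$ (the other case being trivial) and $\delta>0$, then invoke (iii) to extract $\epsilon>0$, $K$, and for each $k\ge K$ a point $x_k\in G^{-\epsilon}$ with $I_k(x_k)\le \inf_G I+2\delta$. The key geometric observation is that the open ball $U_k:=B(x_k,\epsilon/2)$ is contained in $G^{-\epsilon/2}$, so $\{Y^k_n\in U_k\}\cap\{d(X_n,Y^k_n)\le\epsilon/2\}\subset\{X_n\in G\}$. The LDP lower bound for $Y^k_n$ on the open set $U_k$ gives $\liminf_n a_n^{-1}\log\P(Y^k_n\in U_k)\ge -I_k(x_k)\ge -\inf_G I-2\delta$; combining this with $\P(Y^k_n\in U_k)\le \P(X_n\in G)+\P(d(X_n,Y^k_n)>\epsilon/2)$ via the $\log(p+q)$ bound and the elementary identity $\liminf_n\max(\alpha_n,\beta_n)\le\max(\liminf_n\alpha_n,\limsup_n\beta_n)$ yields
\[
-\inf_G I-2\delta \;\le\; \max\!\Big(\liminf_n \tfrac{\log\P(X_n\in G)}{a_n},\ \limsup_n \tfrac{\log\P(d(X_n,Y^k_n)>\epsilon/2)}{a_n}\Big).
\]
Choosing $k$ so large that (iv) makes the second entry strictly less than the left-hand side forces the first entry to attain the maximum, and letting $\delta\downarrow 0$ completes the lower bound.

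The upper bound is cleaner. For closed $F$ and $\epsilon>0$, $F^\epsilon$ is closed; from the inclusion $\{X_n\in F\}\subset\{Y^k_n\in F^\epsilon\}\cup\{d(X_n,Y^k_n)>\epsilon\}$, the LDP upper bound for $Y^k_n$, the $\log(p+q)$ inequality, and $\limsup_n\max=\max\limsup_n$, I obtain for every $k$
\[
\limsup_n \frac{\log\P(X_n\in F)}{a_n}\;\le\; \max\!\Big(-\inf_{F^\epsilon}I_k,\ \limsup_n \frac{\log\P(d(X_n,Y^k_n)>\epsilon)}{a_n}\Big).
\]
Sending $k\to\infty$ and invoking (ii) and (iv) gives $\limsup_n a_n^{-1}\log\P(X_n\in F)\le -\inf_{F^\epsilon}I$, after which monotonicity of $\epsilon\mapsto\inf_{F^\epsilon}I$ lets me send $\epsilon\downarrow 0$ to produce the extended upper bound. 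The one technical wrinkle I foresee is that $G^{-\epsilon}$ need not be open, so the $Y^k_n$-LDP cannot be applied to it directly; shrinking to the open ball $U_k$ around a near-minimizer of $I_k$ absorbs this at the cost of halving $\epsilon$. Apart from that, the proof is a clean synthesis of the $Y^k_n$-LDPs with the approximation hypothesis (iv) and the rate-function control (ii)--(iii).
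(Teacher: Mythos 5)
Your proof is correct, and on the lower bound it actually improves on the paper's argument. The upper bound matches the paper essentially line for line: decompose $\{X_n\in F\}\subseteq\{Y_n^k\in F^\epsilon\}\cup\{d(X_n,Y_n^k)>\epsilon\}$, apply the principle of the largest term and the $Y_n^k$-LDP upper bound, send $k\to\infty$ using (ii) and (iv), then $\epsilon\downarrow 0$. For the lower bound the paper proceeds differently: it picks $\epsilon,k$ via (iii)--(iv), writes $\P(X_n\in A)\ge\P(Y_n^k\in A^{-\epsilon})-\P(d(X_n,Y_n^k)>\epsilon)$, shows that the ratio of the subtracted term to $\P(Y_n^k\in A^{-\epsilon})$ tends to zero, and then invokes the $Y_n^k$-LDP lower bound on the set $A^{-\epsilon}$. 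That last step treats $A^{-\epsilon}$ as though it were open, but it need not be; the LDP only controls $\inf_{(A^{-\epsilon})^\circ}I_k$, which can be strictly larger than $\inf_{A^{-\epsilon}}I_k$. You identify and repair exactly this point: by choosing a near-minimizer $x_k\in G^{-\epsilon}$ and passing to the open ball $U_k=B(x_k,\epsilon/2)\subseteq G^{-\epsilon/2}$, you apply the $Y_n^k$-LDP lower bound legitimately (to an open set) while retaining the inclusion $\{Y_n^k\in U_k\}\cap\{d(X_n,Y_n^k)\le\epsilon/2\}\subseteq\{X_n\in G\}$. Your replacement of the paper's ``ratio $\to0$'' step by the bound $\log(p+q)\le\log 2+\max(\log p,\log q)$ together with $\liminf_n\max(\alpha_n,\beta_n)\le\max(\liminf_n\alpha_n,\limsup_n\beta_n)$ is also a clean simplification; both devices serve the same purpose of discarding the $d(X_n,Y_n^k)>\epsilon$ event via (iv). In short, this is a valid proof that follows the same overall decomposition strategy but takes a slightly different, and more careful, route through the lower bound.
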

The proof of this proposition is provided in Section~\ref{section:proofs}.
We conclude this section with an immediate implication of Proposition~\ref{theorem:extended-ldp-from-decomposition}.

\begin{corollary}\label{corollary-to-prop21}
Suppose that $Y_n$ satisfies the extended LDP in $(\mathbb S, \mathcal T)$ with speed $a_n$ and the rate function $I$. If for each $\epsilon> 0$,
$$
\limsup_{n\to\infty} \frac 1 {a_n} \log \P(d(X_n, Y_n) > \epsilon) = -\infty,
$$
then $X_n$ satisfies the extended LDP in $(\mathbb S, \mathcal T)$ with speed $a_n$ and the rate function $I$.
\end{corollary}
\begin{proof}
Let $Y_n^k \triangleq Y_n$ and $I_k \triangleq I$ for $k=1,2,\ldots$. 
Then, (i) and (ii) of Proposition~\ref{theorem:extended-ldp-from-decomposition} are trivially satisfied. 
For (iii), we note that by the definition of $G^{-\epsilon}$, for each $\delta>0$ and $G$ an open set, there exists $\epsilon>0$ such that 
$$
\inf_{x\in G^{-\epsilon}} I(x) \leq \inf_{x \in G} I(x) + \delta,
$$
and hence, (iii) is satisfied for $I_k = I$.
Since (iv) is also satisfied by the assumption, all the conditions of Proposition~\ref{theorem:extended-ldp-from-decomposition} is satisfied and the conclusion of the corollary follows.
\end{proof}

\subsection{Extended sample path LDP for L\'evy processes}\label{subsection:extended-ldp-1-d-levy-processes}
%This section prove the large deviations results for scaled L\'evy processes with Weibull L\'evy measures.
%Let $X$ be a real-valued L\'evy process with a L\'evy measure $\nu$,
%We assume that $\nu$ has Weibull tail distribution with shape parameter in $(0,1)$. More specifically,
Throughout the rest of this paper we assume that
\begin{itemize} [leftmargin=1.5cm]
\item[A1.] $X$ is a real-valued L\'evy process with  L\'evy measure $\nu$ which has non-negative support satisfying $\nu[x,\infty) = \exp(-L(x)x^\alpha)$ where $\alpha \in (0,1)$ and $L(\cdot)$ is  slowly varying at infinity.
\item[A2.] The mapping $x \mapsto L(x)x^{\alpha-1}$ is non-increasing for sufficiently large $x$.
\end{itemize}

%. Note that $\nu$ has non-negative support.
%In addition, we assume that $L(x)x^{\alpha-1}$ is non-increasing for large enough $x$'s.
Let $\bar X_n(t)$ denote the centered and scaled process:
$$
\bar X_n(t) \triangleq \frac 1n X(nt) - t \E X(1).
$$
%Let $X(t) \triangleq \sum_{i=1}^{N(t)} (Z_i - \E Z)$ where $N(t)$ is a unit rate Poisson process and $Z_i$'s  are iid copies of a non-negative random variable $Z$. We assume that $Z$ has Weibull tail distribution---i.e., $\P(Z_1\geq x) = \exp(-L(x)x^\alpha)$ where $\alpha \in (0,1)$ and $L$ is a slowly varying function.
%Let $\bar X_n(t)$ denote the scaled process $\frac 1n X(nt)$.
%The main result of this section is the following theorem.
Let $\mathbb D[0,1]$ denote the Skorokhod space---space of c\`adl\`ag functions from $[0,1]$ to $\R$---and $\mathcal T_{J_1}$ denote the $J_1$ Skorokhod topology on $\mathbb D[0,1]$. 
That is, $\mathbb{D}[0,1]$ is metrized by the usual Skorokhod metric
\begin{equation}\label{eq:skorokhod-J1-metric}	 
	 d_{J_1}(\xi,\zeta) \triangleq \inf_{\lambda \in \Lambda}\left\{\max\{\|\xi(t)-\zeta(\lambda(t))\|_{\infty}, \|\lambda(t)-e(t)\|_{\infty}\}\right\},
	 \end{equation}
	  where $\xi, \zeta \in \mathbb{D}[0,1]$, $\lambda$ is a non-decreasing  homeomorphism  of $[0,1]$ onto itself, $\Lambda$ is the set of such homeomorphisms, $e(t)=t$ is the identity map, and $\|\xi\|_{\infty}=\sup_{t \in [0,1]}|\xi(t)|$ is the supremum norm.
We say that $\xi\in \D[0,1]$ is a pure jump function if $\xi = \sum_{i=1}^\infty x_i\I_{[u_i,1]}$ for some $x_i$'s and $u_i$'s such that $x_i\in \R$ and $u_i\in {(0,1)}$ for each $i$ and $u_i$'s are all distinct.
Let $\mathbb D_{\leqslant \infty}[0,1]$ denote the subspace of $\mathbb D[0,1]$ consisting of non-decreasing pure jump functions vanishing at the origin and continuous at $1$.
For the rest of the paper, if there is no confusion regarding the domain of a function space, we will omit the domain and simply write, for example, $\mathbb D_{\leqslant \infty}$ instead of $\mathbb D_{\leqslant \infty}[0,1]$.
The next theorem is the main result of this paper.

\begin{theorem}\label{theorem:1d-extended-ldp}
$\bar X_n$ satisfies the extended large deviation principle in $(\mathbb D, \mathcal T_{J_1})$ with speed $L(n)n^\alpha$ and rate function
\begin{equation}\label{eq:the-rate-function-1d}
I(\xi)=
\begin{cases}
\sum_{t: \xi(t) \neq \xi(t-)} \big(\xi(t)-\xi(t-)\big)^\alpha & \text{if }  \xi \in \mathbb D_{\leqslant \infty},\\
\infty, & otherwise.
\end{cases}
\end{equation}
That is, for any measurable $A$,
\begin{equation}\label{ineq:1d-extended-ldp}
-\inf_{\xi \in A^\circ} I(\xi)
\leq
\liminf_{n\to\infty} \frac{\log \P(\bar X_n \in A)}{L(n)n^{\alpha}}
\leq
\limsup_{n\to\infty} \frac{\log \P(\bar X_n \in A)}{L(n)n^{\alpha}}
\leq
-\lim_{\epsilon\to 0}\inf_{\xi \in A^\epsilon} I(\xi),
\end{equation}
where $A^{\epsilon} \triangleq \{\xi\in \mathbb D: d_{J_1}(\xi, \zeta) \leq \epsilon \text{ for some } \zeta\in A \}$.
\end{theorem}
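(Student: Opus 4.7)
The natural plan is to invoke Proposition~\ref{theorem:extended-ldp-from-decomposition} with a decomposition of $\bar X_n$ into its ``big jump'' part and a negligible remainder. Writing $X$ via its L\'evy--It\^o representation, let $N$ be the Poisson random measure on $(0,\infty)\times(0,\infty)$ with intensity $\mathrm{Leb}\otimes\nu$ associated with the positive jumps of $X$. Choose a threshold sequence $b_k\downarrow 0$ and define
\[
Y_n^k(t) \;=\; \frac{1}{n}\int_{(0,nt]\times(b_k n,\infty)} y\,N(\df s,\df y) \;-\; t\,\E\bigl[X(1)\I\{\Delta X(1)\le b_k n\}\bigr],
\]
so that $Y_n^k$ is a scaled compound Poisson drift process whose jumps are exactly the big jumps of $\bar X_n$ on $[0,1]$ exceeding $b_k$, while $\bar X_n - Y_n^k$ is a centered L\'evy process with jumps bounded by $b_k$.

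For condition (i), $Y_n^k$ has, on $[0,1]$, a Poisson number of jumps with mean $n\,\nu(b_k n,\infty) = n\exp\!\bigl(-L(b_k n)(b_k n)^{\alpha}\bigr)$, which is super-exponentially small at speed $L(n)n^{\alpha}$. Conditionally on $j$ jumps, the jump times are uniform on $[0,1]$ and the jump sizes are i.i.d.\ with tail $\exp(-L(\cdot)(\cdot)^{\alpha})$ restricted to $(b_k,\infty)$; Bryc's inverse Varadhan lemma (exactly as the introduction suggests) then yields an LDP on $(\D,\mathcal T_{J_1})$ for $Y_n^k$ at speed $L(n)n^{\alpha}$ with rate function
\[
I_k(\xi)=
\begin{cases}
\sum_{t:\,\xi(t)\neq\xi(t-)}(\xi(t)-\xi(t-))^{\alpha} &\text{if $\xi\in\D_\infty^\uparrow$ and all jumps of $\xi$ exceed $b_k$,}\\
\infty &\text{otherwise.}
\end{cases}
\]
Since $I_k\uparrow I$ pointwise as $k\to\infty$, condition (ii) follows from a standard monotone-limit argument, while condition (iii) reduces to showing that any $\xi\in\D_\infty^\uparrow$ with finitely many jumps and $I(\xi)<\infty$ can be perturbed in $J_1$ by $\epsilon$ into a path whose jumps all exceed $b_k$ for large $k$; this is straightforward by simply truncating the small jumps (their $\alpha$-power contribution is finite and can be made arbitrarily small by the dominated convergence of $I(\xi)$).

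The main obstacle is condition (iv), the super-exponential approximation of $\bar X_n$ by $Y_n^k$ in $J_1$. Since $\bar X_n-Y_n^k$ is a centered L\'evy process with jumps bounded by $b_k$, its $J_1$ distance to $0$ is controlled by its uniform norm on $[0,1]$, and we must show
\[
\lim_{k\to\infty}\limsup_{n\to\infty}\frac{1}{L(n)n^{\alpha}}\log\P\Bigl(\sup_{t\in[0,1]}\bigl|\bar X_n(t)-Y_n^k(t)\bigr|>\epsilon\Bigr)=-\infty.
\]
The plan here is to adapt the concentration inequality of \cite{jelenkovic2003large} for sums of centered bounded random variables with Weibull tails: splitting the remainder jumps at a second truncation level of order $n^{\beta}$ for a well-chosen $\beta<1$, one bounds the ``moderate'' jumps by a Bennett/Bernstein-type exponential martingale inequality and the ``tiny'' jumps by Doob's inequality, then optimizes so that the resulting bound decays like $\exp(-c_k L(n)n^{\alpha})$ with $c_k\to\infty$ as $b_k\downarrow 0$. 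Adapting this Weibull concentration bound to the L\'evy process remainder---in particular, tracking the dependence of the exponent on the truncation level $b_k$---is the key technical step; everything else assembles by feeding the four verified conditions into Proposition~\ref{theorem:extended-ldp-from-decomposition}.
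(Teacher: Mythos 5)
Your proposal shares the paper's high-level architecture: decompose $\bar X_n$ into a ``big-jump'' step process plus a remainder, derive the LDP for the step process from Bryc's inverse Varadhan lemma, and kill the remainder with a Jelenkovi\'c--Mom\v{c}ilovi\'c-type concentration estimate, feeding everything into Proposition~\ref{theorem:extended-ldp-from-decomposition}. There is, however, a genuine gap at condition (i). You apply Bryc's lemma directly to $Y_n^k$ on $(\mathbb D,\mathcal T_{J_1})$, but Bryc's lemma (Theorem~4.4.13 of \cite{dembo2010large}) requires exponential tightness, and exponential tightness \emph{fails} in $(\mathbb D,J_1)$ for these truncated step processes. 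Concretely, the sublevel sets of your candidate $I_k$ are not $J_1$-compact: the sequence $\xi_n=\mathbbm{1}_{[1/2,1]}+\mathbbm{1}_{[1/2+1/n,1]}$ satisfies $I_k(\xi_n)=2$ for all $n$ yet has no $J_1$-convergent subsequence, because two jumps whose locations coalesce cannot be merged into one jump in the $J_1$ metric. (This coalescence obstruction is exactly what drives the paper's $J_1$-counterexample in Section~3.3 and why only an \emph{extended} LDP is available.) So the invocation of Bryc's lemma on $\mathbb D$ is unjustified, and if it applied it would falsely deliver a good rate function.

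The paper's proof avoids this by truncating by \emph{rank}, not by threshold: $\bar J_n^k$ retains exactly the $k$ largest jumps, so the randomness reduces to the finite-dimensional vector $\big(Q_n^\gets(\Gamma_1)/n,\dots,Q_n^\gets(\Gamma_k)/n,U_1,\dots,U_k\big)$. Bryc's lemma is applied in $\R_+^k\times[0,1]^k$ (Lemma~\ref{lemma:ldp-for-k-biggest-jump-sizes} and Corollary~\ref{lemma:ldp-for-k-biggest-jump-sizes-and-their-times}), where the rate function has compact sublevel sets, and the resulting finite-dimensional LDP is then transported to $\mathbb D$ through the map $T_k(x,u)=\sum_{i=1}^k x_i\mathbbm{1}_{[u_i,1]}$. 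Because $T_k$ is \emph{not} continuous where jump times coincide, Lemma~\ref{lemma:sample-path-ldp-for-J-hat} proves the upper and lower LDP bounds directly, exploiting the permutation symmetry of $T_k$ and of $\hat I_k$ to show that passing to the closure/interior of $T_k^{-1}(F)$ or $T_k^{-1}(G)$ does not change the relevant infima. Your threshold-based $Y_n^k$ also carries a random, unbounded number of jumps, adding a further complication absent from the paper's fixed-$k$ set-up. The threshold decomposition could be made to work, but you would need to replace the direct Bryc-on-$\mathbb D$ step with a finite-dimensionalization of the same type and handle the random jump count; as written, the proof of condition (i) does not go through. (A secondary, minor imprecision: in your verification of (iii), the near-optimal path $\xi_0$ may have infinitely many jumps, and the truncation that produces a feasible $\xi_1$ must be at the level $b_k$, not at a fixed rank, so that the resulting path actually lies in the domain of your $I_k$.)
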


\begin{remark}
Note that it is straightforward to extend Theorem~\ref{theorem:1d-extended-ldp} to spectrally two-sided L\'evy processes. For instance, suppose that the L\'evy measure $\nu$ of the process $X$ has  Weibull tail $\nu[x,\infty)=exp(-L(x)x^{\alpha})$ where $\alpha \in (0,1)$, $L(x)x^{\alpha-1}$ satisfies assumption $A2$, and  $\nu(-\infty,x]$ is light-tailed. 
We can consider a decomposition of $\bar X_n$ into a spectrally positive part $\bar Y_n$ and a spectrally negative part $\bar X_n -\bar Y_n$. 
Then, $\bar Y_n$ satisfies the LDP in Theorem~\ref{theorem:1d-extended-ldp}. 
On the other hand, since $\bar X_n - \bar Y_n$ is light-tailed, it satisfies a sample path LDP with linear speed---see \cite{mogulskii1993large}. 
This implies that the condition Corollary~\ref{corollary-to-prop21} is satisfied, which, allows one to apply Corollary~\ref{corollary-to-prop21} with $Y_n$, and we conclude that $\bar X_n$ satisfies the same LDP as the one in Theorem~\ref{theorem:1d-extended-ldp}.
\end{remark}

Recall that $X_n(\cdot)\triangleq X(n\cdot)$ has It\^{o} representation
\begin{equation}\label{eq:ito_rep_onesided}
X_n(s) = nsa + B(ns) + \int_{x<1} x[\hat N([0,ns]\times dx) - ns\nu(dx)] + \int_{x\geq 1} x\hat N([0,ns]\times dx),
\end{equation}
with $a$ a drift parameter, $B$ a Brownian motion, and $\hat N$ a Poisson random measure with mean measure Leb$\times\nu$ on $[0,n]\times(0,\infty)$; Leb here denotes the Lebesgue measure. All terms in \eqref{eq:ito_rep_onesided} are independent.
We will see that the large deviation behavior is dominated by the last term of \eqref{eq:ito_rep_onesided}.
It turns out to be convenient to consider the following distributional representation of the centered and scaled version of the last term:
\begin{align*}
\bar Y_n(\cdot)
\triangleq
\frac1n\sum_{l=1}^{N(n\cdot)}(Z_l - \E Z)
\stackrel{\mathcal D}{=}
\frac1n\int_{x\geq 1} x \hat N([0,n\cdot]\times dx) - \frac1n \,(\E Z)\,{\hat N([0,n\cdot]\times[1,\infty))} 
\end{align*}
where $N(t) \triangleq \hat N([0,t]\times[1,\infty))$ is a Poisson process with arrival rate ${\nu_1 \triangleq \nu[1,\infty)}$, and $Z_i$'s are i.i.d.\ copies of $Z$ such that $\P(Z\geq x) = \nu[x\vee 1, \infty)/\nu_1$, independent of $N$.
To facilitate the proof of Theorem~\ref{theorem:1d-extended-ldp}, we consider a further decomposition of $\bar Y_n$ into two pieces, one of which consists of the big increments, and the other one keeps the residual fluctuations.
To be more specific, we introduce an extra notation for the rank of the increments.
Given $N(n)$, define $\mathbf S_{N(n)}$ to be the set of all permutations of $\{1,...,N(n)\}$.
Let $R_n:\{1,\ldots,N(n)\}\to\{1,\ldots,N(n)\}$ be a random permutation of $\{1,\ldots,N(n)\}$ sampled uniformly from $\Sigma_n\triangleq\{\sigma \in \mathbf S_{N(n)}: Z_{\sigma^{-1}(1)} \ge \cdots \ge Z_{\sigma^{-1}(N(n))} \}$.
In words, $R_n(i)$ is the rank of $Z_i$ among $\{Z_1, \ldots, Z_{N(n)}\}$ when sorted in decreasing order with the ties broken uniformly randomly.
Now, we see that
$$
\bar Y_n{ (t)}
=
\underbrace{\frac1n \sum_{i=1}^{N(nt)} Z_i \one{R_n(i) \leq k}}_{\triangleq \bar J_n^k{(t)}}
+
\underbrace{\frac1n \sum_{i=1}^{N(nt)} (Z_i \one{R_n(i)>k} - \E Z)}_{\triangleq \bar H_n^k{(t)}}.
$$
%Roughly speaking, our proof strategy is to show that the contribution of $\bar H_n^k$ can be made negligible (Lemma~\ref{lemma:negligence-K}) in the large deviations perspective by choosing $k$ large enough, while the rate function that controls the LDP of $\bar J_n^k$ becomes close to $I$ (Lemma~\ref{lemma:equivalence-J} and Lemma~\ref{lemma:Jk-approximates-J}) as $k \to \infty$.
The proof of Theorem~\ref{theorem:1d-extended-ldp} is straightforward once the following technical lemmas are in our hands; their proofs are provided in Section~\ref{section:proofs}.
Let $\mathbb D_{\leqslant k}$ denote the subspace of $\mathbb D_{\leqslant\infty}$ consisting of paths that have less than or equal to $k$ discontinuities.
\begin{lemma}\label{lemma:equivalence-J}
For each fixed $k$, $\bar J_n^k$ satisfies the LDP in $(\mathbb D, \mathcal T_{J_1})$ with speed $L(n)n^\alpha$ and the rate function
\begin{equation}\label{eq:rate-function-for-Ik}
I_k(\xi) =
\begin{cases}
\sum_{t\in[0,1]}\left(\xi(t)-\xi(t-)\right)^\alpha
&
\text{if \ $\xi\in \mathbb D_{\leqslant k}$},
\\
\infty
&
otherwise.
\end{cases}
\end{equation}

\end{lemma}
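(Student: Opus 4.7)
The plan is to reduce the statement to a finite-dimensional LDP via the contraction principle. Let $W_n=(W_n^{(1)},\ldots,W_n^{(k)})$ denote the top-$k$ values among $\{Z_i/n: i=1,\ldots,N(n)\}$ (ordered as $W_n^{(1)}\ge\cdots\ge W_n^{(k)}\ge 0$, with ties broken via $R_n$, and $W_n^{(j)}=0$ if $N(n)<j$), and let $U_n=(U_n^{(1)},\ldots,U_n^{(k)})\in[0,1]^k$ collect the corresponding normalised arrival times. Then $\bar J_n^k=\Psi(W_n,U_n)$, where
$$
\Psi:[0,\infty)^k\times[0,1]^k\to\mathbb D[0,1],\qquad \Psi(w,u)(t)=\sum_{j=1}^k w_j\,\mathbbm 1_{[u_j,1]}(t).
$$
A direct check shows that $\Psi$ is continuous in the $J_1$ topology at every $(w,u)$ whose nonzero coordinates $w_j$ have pairwise distinct times $u_j\in[0,1)$.

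Next I would establish a joint LDP for $(W_n,U_n)$ in $[0,\infty)^k\times[0,1]^k$ with speed $L(n)n^\alpha$ and rate function
$$
J_k(w,u)=\begin{cases}\sum_{j=1}^k w_j^\alpha & \text{if } w_1\ge w_2\ge\cdots\ge w_k\ge 0,\\ \infty & \text{otherwise}.\end{cases}
$$
The key analytic input is the Weibullian tail $\P(Z>nz)=\exp(-L(nz)(nz)^\alpha)/\nu_1$, combined with the slow variation of $L$ and the monotonicity of $L(x)x^{\alpha-1}$, yielding
$$
\lim_{n\to\infty}\frac{\log \P(Z>nz)}{L(n)n^\alpha}=-z^\alpha,
$$
uniformly on compact subsets of $(0,\infty)$. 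Since $N(n)$ is Poisson with mean $\nu_1 n$, and since, conditionally on $N(n)$, the marks $Z_i$ are independent of the arrival times (which are uniform on $[0,n]$), the large deviation asymptotics decouple: the jump sizes contribute $\sum w_j^\alpha$ by independence and by Poisson concentration (the combinatorial factor $\binom{N(n)}{k}$ is sub-exponential on our scale), while the uniform conditional distribution of the arrival times contributes nothing. I would prove matching bounds by (i) a lower bound obtained by forcing $k$ specific jumps to land in small boxes around the target configuration, and (ii) an upper bound obtained by partitioning the complement of an $\epsilon$-neighbourhood, bounding each piece via the tail estimate above, and truncating on the event $\{N(n)\le Cn\}$, whose complement contributes at sub-exponential scale.

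Finally, I would invoke the contraction principle. For a step function $\xi=\sum_{j=1}^m w_j\mathbbm 1_{[u_j,1]}\in\mathbb D_{\leqslant k}$, the infimum of $J_k$ over $\Psi^{-1}(\xi)$ equals $\sum_j w_j^\alpha=\sum_t(\xi(t)-\xi(t-))^\alpha=I_k(\xi)$, and for $\xi\notin\mathbb D_{\leqslant k}$ the preimage is empty, giving $\infty$, as required. The main obstacle is the presence of discontinuity points of $\Psi$ (coalescing arrival times or times equal to $1$): I would handle these either by passing through a version of the contraction principle that tolerates discontinuities on a $J_k$-null set, or, more directly, by comparing $J_1$ neighbourhoods and showing that the configurations in question can be perturbed at negligible cost in $J_k$. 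A secondary but routine point is to confirm that $I_k$ is lower semi-continuous on $\mathbb D$ in the $J_1$ topology; this is covered by the authors' Lemma~6.1/6.2 machinery on pure-jump functions.
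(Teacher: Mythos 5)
Your plan mirrors the paper's high-level outline---represent $\bar J_n^k$ via its $k$ largest jumps and their times, prove a finite-dimensional LDP for that $(w,u)$-vector, then push forward to $\mathbb D$---but the two key steps are implemented differently, and each place you've flagged as ``to be handled'' is where the real work sits. For the finite-dimensional LDP, the paper does not argue via order statistics and a combinatorial accounting of $\binom{N(n)}{k}$; instead it rewrites the top-$k$ jump sizes as $(Q_n^\gets(\Gamma_1)/n,\ldots,Q_n^\gets(\Gamma_k)/n)$ with $\Gamma_i=E_1+\cdots+E_i$, which gives an \emph{explicit} joint density, and then applies Bryc's inverse Varadhan lemma by computing $\Lambda_f^*$ directly from that density. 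Your order-statistics route is plausible, but the ``sub-exponential combinatorial factor'' claim needs a genuine uniform upper bound over the $k$-tuple of thresholds, which is exactly the delicacy that the explicit density sidesteps. For the transfer to $\mathbb D$, your observation that $\Psi$ fails to be continuous is the crux, but your characterisation of the discontinuity set is slightly off (a jump time at $0$ with positive size is also a $J_1$-discontinuity point, so you need $u_j\in(0,1)$, not $[0,1)$) and, more importantly, those discontinuity points have \emph{finite} rate $J_k$, so the standard ``almost-continuous contraction principle'' (discontinuities confined to an $I$-exceptional set) does not apply. Your fallback---direct comparison of $J_1$-neighbourhoods---is the correct route and is what the paper actually does in its Lemma~\ref{lemma:sample-path-ldp-for-J-hat}: it first proves the LDP on $\mathbb D_{\leqslant k}$ by showing $\inf_{T_k^{-1}(F)}\hat I_k = \inf_{\overline{T_k^{-1}(F)}}\hat I_k$ for closed $F$ (exploiting permutation invariance of the rate function to replace an approximating point by a sorted one lying in $T_k^{-1}(F)$) and $\inf_{T_k^{-1}(G)^\circ}\hat I_k\le\inf_{T_k^{-1}(G)}\hat I_k$ for open $G$ (merging coalescing times and relocating superfluous ones at no cost in $\hat I_k$); it then passes to $\mathbb D$ via Lemma~4.1.5 of Dembo--Zeitouni since $\mathbb D_{\leqslant k}$ is closed and carries the process a.s. One place your setup is actually a bit cleaner: by defining $W_n^{(j)}=0$ when $N(n)<j$ you get $\bar J_n^k=\Psi(W_n,U_n)$ on the nose, so you avoid the paper's $\check J_n^{\leqslant k}$ correction term and the ensuing comparison with $\P(N(n)<k)$---a modest simplification, at the price of not having the explicit $\Gamma_i$-density available for Bryc.
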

Recall that $A^{-\epsilon} \triangleq \{\xi\in \mathbb D:\, d_{J_1}(\xi,\zeta) \leq \epsilon \text{ implies }\zeta\in A\}$.
\begin{lemma}\label{lemma:Jk-approximates-J}
For each $\delta > 0$ and each open set $G$, there exist $\epsilon>0$ and $K\ge 0$ such that for any $k\ge K$
\begin{equation}\label{inf-G-epsilon-delta}
\inf_{\xi \in G^{-\epsilon}}I_k(\xi) \le \inf_{\xi \in G}I(\xi)+\delta.
\end{equation}
\end{lemma}

Let $B_{J_1}(\xi,\epsilon)$ be the open ball w.r.t.\ the $J_1$ Skorokhod metric centered at $\xi$ with radius $\epsilon$ and $B_\epsilon \triangleq B_{J_1}(0,\epsilon)$.
\begin{lemma}\label{lemma:negligence-K}
For every $\epsilon>0$ it holds that
\begin{equation}
\lim_{k \rightarrow \infty}\limsup_{n \rightarrow \infty}\frac{1}{L(n)n^\alpha}\log\P\left(\|\bar H_n^k\|_\infty >\epsilon\right)= -\infty.
\end{equation}
\end{lemma}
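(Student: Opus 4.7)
The plan is to split $\P(\|\bar K_n^k\|_\infty > \epsilon)$ via a truncation threshold $b=c_k n$, chosen so that, with high probability, at most $k$ of the $Z_i$'s exceed $b$. On that event every surviving jump of $\bar K_n^k$ is bounded by $b$, and I can reduce to controlling a centered compound-Poisson martingale with bounded increments.

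Concretely, let $M_b = \#\{i\le N(n) : Z_i > b\}$. By the marking property of Poisson processes, $M_b$ is $\mathrm{Poisson}(n e^{-L(b)b^\alpha})$, and on $\{M_b\le k\}$ every jump larger than $b$ is absorbed into the top-$k$ block. A union bound gives
\begin{equation*}
\P(\|\bar K_n^k\|_\infty > \epsilon) \le \P(M_b > k) + \P\bigl(\|\bar K_n^k\|_\infty > \epsilon,\, M_b \le k\bigr).
\end{equation*}
For the first term, a Markov bound on the Poisson tail yields $\P(M_b > k) \le (n e^{-L(b)b^\alpha})^{k+1}/(k+1)!$, and using slow variation of $L$ together with $b=c_k n$, one obtains
\begin{equation*}
\limsup_{n\to\infty}\frac{1}{L(n)n^\alpha}\log\P(M_b > k) \le -(k+1)c_k^\alpha,
\end{equation*}
which tends to $-\infty$ as $k\to\infty$ provided $(k+1)c_k^\alpha \to \infty$.

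For the second term, on $\{M_b\le k\}$ I would write $\bar K_n^k$ as a centered truncated compound-Poisson process $\bar W_n^b(t) := n^{-1}\sum_{i=1}^{N(nt)}\bigl(Z_i\mathbf 1_{\{Z_i\le b\}} - \E[Z\mathbf 1_{\{Z\le b\}}]\bigr)$, plus a deterministic correction of magnitude at most $k c_k + \nu_1 \E[Z\mathbf 1_{\{Z>b\}}] + o(1)$ arising from (i) the at most $k$ top-ranked jumps of size $\le b$, (ii) the discrepancy between $\E Z$ and $\E[Z\mathbf 1_{\{Z\le b\}}]$, and (iii) the $O(k/n)$ fluctuation in the Poisson count. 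Choosing $c_k$ so that both $(k+1)c_k^\alpha \to \infty$ and $k c_k \to 0$ (feasible when $\alpha<1$, e.g.\ $c_k=k^{-\beta}$ with $\beta\in(1,1/\alpha)$), the correction is below $\epsilon/2$ for $k$ and $n$ large, and it then suffices to show that for each fixed $k$, $\limsup_n \frac{1}{L(n)n^\alpha}\log\P(\|\bar W_n^b\|_\infty > \epsilon/2) = -\infty$.

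The main obstacle is precisely this concentration step. A direct Bennett/Bernstein bound applied to the bounded-jump martingale $\bar W_n^b$ only yields $\exp\bigl(-\Theta(\epsilon\log n/c_k)\bigr)$, a polylogarithmic-in-$n$ rate that is manifestly insufficient at speed $L(n)n^\alpha$. The remedy is to exploit the Weibullian shape of the increments inside the truncation interval $[0,b]$ via a refined Fuk--Nagaev-type inequality: iterating the truncation argument over a cascade of intermediate levels $b > b_1 > \cdots > b_m$ (with Poisson-count bounds at every level) and finally applying a Gaussian-scale Bennett at the lowest level $b_m = o(n^{1-\alpha}/L(n))$ produces the desired super-exponential bound. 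This is essentially the modification of the concentration bound of \cite{jelenkovic2003large} alluded to in the introduction, and combining it with the estimate on $\P(M_b>k)$ completes the proof.
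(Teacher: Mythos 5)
Your overall decomposition is structurally the same as the paper's: truncate at a level proportional to $n$ that shrinks with $k$, bound the number of exceedances by a Poisson tail estimate, and control the truncated residual. So far so good, and the Poisson-tail piece is handled correctly. The problem is the last step, which is where the real work lies and where your proposal contains a claim that is false.

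You reduce to showing, for each fixed $k$ (hence fixed $c_k>0$), that
\begin{equation*}
\limsup_{n\to\infty}\frac{1}{L(n)n^\alpha}\log\P\bigl(\|\bar W_n^b\|_\infty>\epsilon/2\bigr)=-\infty,
\qquad b=c_kn.
\end{equation*}
This cannot be true. For a truncation level proportional to $n$, the event $\{\sup_t\bar W_n^b(t)>\epsilon/2\}$ is realized by having roughly $m=\lceil \epsilon/(2c_k)\rceil$ jumps each of size close to $b$; the probability of that configuration is of order $\exp\bigl(-m\,L(c_kn)(c_kn)^\alpha\bigr)\approx\exp\bigl(-\tfrac{\epsilon}{2}\,c_k^{\alpha-1}L(n)n^\alpha\bigr)$, i.e.\ the speed-$L(n)n^\alpha$ rate is a \emph{finite} negative constant, not $-\infty$. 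No cascade of intermediate truncation levels and no Bennett bound at the bottom can beat this lower bound, because it is a lower bound on the probability itself. What is true, and what the argument actually needs, is that the finite rate tends to $-\infty$ as $k\to\infty$, precisely because $c_k^{\alpha-1}\to\infty$. That is exactly what the paper's Lemma~\ref{lemma:negligence-positive} delivers: a bound of the form $-(\epsilon/3)^\alpha(\epsilon/\delta)^{1-\beta}$ with $\alpha<\beta<1$, which diverges as $\delta\to0$. The paper obtains it in one shot via a Chernoff bound with a truncation-adapted tilting parameter (the Jelenkovi\'c--Mom\v{c}ilovi\'c trick), plus a separate Bennett bound for the lower tail (Lemma~\ref{lemma:negligence-negative}), followed by Etemadi's maximal inequality; no multi-level cascade is needed. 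If you reformulate your final step as ``$\limsup_n\le -C(c_k)$ with $C(c_k)\to\infty$ as $c_k\to0$'' and supply such a uniform-in-$n$ Chernoff estimate, the rest of your decomposition goes through; as written, the proof rests on an unprovable intermediate claim.
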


\begin{proof}[Proof of Theorem~\ref{theorem:1d-extended-ldp}]
For this proof, we use the following representation of $\bar X_n$:
\begin{equation}\label{eq:Poisson-Jump-representation}
\bar X_n \stackrel{\mathcal D}{=} \bar Y_n +   \bar R_n = \bar J_n^k + \bar H_n^k + \bar R_n,
\end{equation}
where $\bar R_n(s) = \frac1n B(ns) + \frac1n\int_{|x|\leq 1} x[N([0,ns]\times dx) - ns\nu(dx)] + \frac{1}n(\E Z) \hat N([0,n\cdot]\times[1,\infty))  - \nu_1t$.
Next, we verify the conditions of Proposition~\ref{theorem:extended-ldp-from-decomposition}.
Lemma~\ref{theorem:lower-semi-continuity} confirms that $I$ is lower semi-continuous.
Lemma~\ref{lemma:equivalence-J} verifies (i).
To see that (ii) is satisfied, note that $I_k(\xi) \geq I(\xi)$ for any $\xi \in \D$.
Lemma~\ref{lemma:Jk-approximates-J} verifies (iii).
Since $d_{J_1}(\bar X_n, \bar J_n^k) \leq \|\bar H_n^k\|_\infty + \|\bar R_n\|_\infty$, Lemma~\ref{lemma:negligence-K} and $\limsup_{n\to\infty} \frac{1}{L(n)n^\alpha}\log \P(\|\bar R_n \|_\infty >\epsilon) = -\infty$ implies (iv). Note that $\bar R_n$ is a L\'evy process whose moment generating function is finite everywhere, and hence, the LDP upper bound in Theorem~2.5 of \cite{mogulskii1993large} applies (with linear speed) to $\P(d_{J_1}(0,\bar R_n) > \epsilon)$. 
This, in turn, implies that $\limsup_{n\to\infty} \frac{1}{L(n)n^\alpha}\log \P(\|\bar R_n \|_\infty >\epsilon) = -\infty$.
Now, the conclusion of the theorem follows from Proposition~\ref{theorem:extended-ldp-from-decomposition}.
\end{proof}

\subsection{Extended LDP for random walks}\label{subsec:extended-ldp-for-random-walks}
Let $S_n \triangleq Z_1+\cdots+Z_n$ where $Z_i$'s are non-negative random variables.
Consider the centered and scaled process $\bar S_n = \{\bar S_n(t), t\in [0,1]\}$ where 
$\bar S_n(t) \triangleq  \frac1n \sum_{i=1}^{[nt]} (Z_i- \E Z),\, t\in[0,1].$
We assume that $\P(Z \geq x) = \exp(-L(x)x^\alpha)$  where $\alpha \in (0,1)$ and $L(\cdot)$ is a slowly varying function. 
We also assume A2 is in force as in Section~\ref{subsection:extended-ldp-1-d-levy-processes}.
The goal of this section is to prove an extended LDP for $\bar S_n$.
Let $\tilde \D_{\leqslant \infty}$ denote the subspace of $\D$ consisting of non-decreasing pure jump functions vanishing at the origin (not necessarily continuous at 1, though). 
Let $\tilde \D_{\leqslant k}$ denote the subspace of $\tilde \D_{\leqslant \infty}$ consisting of paths that have less than or equal to $k$ discontinuities. 
Define  $\tilde I$  as follows:
\begin{equation*}%\label{eq:the-rate-function-1d}
\tilde I(\xi)=
\begin{cases}
\sum_{t: \xi(t) \neq \xi(t-)} \big(\xi(t)-\xi(t-)\big)^\alpha & \text{if }  \xi \in \tilde \D_{\leqslant \infty},\\
\infty, & otherwise.
\end{cases}
\end{equation*}

\begin{theorem}\label{thm:random-walk}
$\bar S_n$ satisfies the extended large deviation principle in $(\mathbb D, \mathcal T_{J_1})$ with speed $L(n)n^\alpha$ and  rate function $\tilde I$.
\end{theorem}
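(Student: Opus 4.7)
The plan is to mirror the proof of Theorem~\ref{theorem:1d-extended-ldp} by applying Proposition~\ref{theorem:extended-ldp-from-decomposition} to a decomposition of $\bar S_n$ analogous to \eqref{eq:Poisson-Jump-representation}. Let $X_i \triangleq S_i - S_{i-1}$ denote the i.i.d.\ increments, and let $R_n(i)$ be the rank of $X_i$ among $\{X_1,\ldots,X_n\}$ when sorted in decreasing order (ties broken uniformly at random). Decompose $\bar S_n = \bar J_n^k + \bar K_n^k$ with
$$\bar J_n^k(t) = \frac1n\sum_{i=1}^{\lfloor nt\rfloor} X_i\one{R_n(i)\le k}, \qquad \bar K_n^k(t) = \frac1n\sum_{i=1}^{\lfloor nt\rfloor} X_i\one{R_n(i)>k}.$$
Compared with \eqref{eq:Poisson-Jump-representation}, there is no residual Brownian/small-jump term $\bar R_n$, so the setup is in fact slightly simpler.

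To invoke Proposition~\ref{theorem:extended-ldp-from-decomposition}, I would establish random-walk analogues of Lemmas~\ref{lemma:equivalence-J} and \ref{lemma:negligence-K}; Lemma~\ref{lemma:Jk-approximates-J} depends only on the rate functions and carries over verbatim. For the analogue of Lemma~\ref{lemma:equivalence-J}, $\bar J_n^k$ is a step function with at most $k$ jumps of heights $X_{(j)}/n$ occurring at times $\sigma(j)/n$, where $X_{(1)}\ge\cdots\ge X_{(k)}$ are the top-$k$ order statistics of $X_1,\ldots,X_n$ and $\sigma(j)$ their positions. Conditional on the values, the positions form a uniformly random size-$k$ subset of $\{1,\ldots,n\}$, so their rescalings converge to $k$ i.i.d.\ $\mathrm{Unif}(0,1)$ random variables. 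The joint tail of the top order statistics on the scale $n$, combined with slow variation of $L$, gives
$$\P\bigl(X_{(1)}\ge nx_1,\ldots,X_{(k)}\ge nx_k\bigr) = \exp\bigl(-L(n)n^\alpha(x_1^\alpha+\cdots+x_k^\alpha)(1+o(1))\bigr)$$
for $x_1\ge\cdots\ge x_k>0$. Combining the two ingredients, either via Bryc's inverse Varadhan lemma as in the L\'evy proof or by a direct cylinder-set computation in $\mathbb D$, yields an LDP for $\bar J_n^k$ with the same rate function $I_k$ of \eqref{eq:rate-function-for-Ik}.

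The main obstacle is the analogue of Lemma~\ref{lemma:negligence-K}, namely
$$\lim_{k\to\infty}\limsup_{n\to\infty}\frac{1}{L(n)n^\alpha}\log\P\bigl(\|\bar K_n^k\|_\infty>\epsilon\bigr)=-\infty,$$
since the concentration bound of \cite{jelenkovic2003large} used in the L\'evy case is tailored to Poisson random measures rather than discrete-time sums. My plan is a truncation-plus-concentration argument: fix a slowly decreasing threshold $c_k\downarrow 0$ with $(k+1)c_k^\alpha\to\infty$, and split on the event $\mathcal E_{n,k}\triangleq\{\#\{i\le n:X_i>c_kn\}\le k\}$. On $\mathcal E_{n,k}$, each summand of $\bar K_n^k$ is bounded by $c_k$ after the $1/n$-rescaling (modulo suitable control on the left tail of $X_1$), so a Bennett or Fuk--Nagaev inequality applied to the running maximum yields super-Weibull decay at rate $L(n)n^\alpha$. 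On the complement, a union bound gives a probability at most $\binom{n}{k+1}\exp\bigl(-(k+1)L(c_kn)(c_kn)^\alpha\bigr)=\exp\bigl(-(k+1)c_k^\alpha L(n)n^\alpha(1+o(1))\bigr)$, which is negligible on the LDP scale by the choice of $c_k$. Once these three pieces are in place, conditions (i)--(iv) of Proposition~\ref{theorem:extended-ldp-from-decomposition} are verified exactly as in the proof of Theorem~\ref{theorem:1d-extended-ldp}, and the extended LDP for $\bar S_n$ follows.
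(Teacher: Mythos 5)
Your approach is genuinely different from the paper's, and it has a real gap in the concentration step. The paper does not redo any of the random-walk analysis: it introduces an independent unit-rate Poisson process $N$, defines the L\'evy process $X(t)\triangleq S_{N(t)}$, and observes that $d_{J_1}(\bar S_n,\bar X_n)$ is bounded by $\sup_{t\in[0,1]}|N(tn)/n-t|$, which decays geometrically in $n$ and hence is negligible on the scale $L(n)n^\alpha$. It then applies Proposition~\ref{theorem:extended-ldp-from-decomposition} with the \emph{L\'evy-process} $\bar J_n^k$ as the approximation to $\bar S_n$, reusing Lemmas~\ref{lemma:equivalence-J}--\ref{lemma:negligence-K} verbatim; condition (iv) follows from $d_{J_1}(\bar S_n,\bar J_n^k)\le d_{J_1}(\bar S_n,\bar X_n)+\|\bar K_n^k\|_\infty+\|\bar R_n\|_\infty$. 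This Poissonization is a five-line reduction, whereas you propose to redo the whole program for the walk, including the order-statistics LDP and a fresh concentration estimate.

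The concentration step is where your sketch breaks. You split on $\mathcal E_{n,k}=\{\#\{i\le n:X_i>c_kn\}\le k\}$ and claim that on $\mathcal E_{n,k}$ a Bennett or Fuk--Nagaev inequality for the $c_kn$-truncated sum ``yields super-Weibull decay at rate $L(n)n^\alpha$.'' It does not. With truncation at $M=c_kn$ over $j\le 2n$ terms and deviation $n\epsilon$, Bennett gives an exponent of order $(\epsilon/c_k)\log(\epsilon c_k n/\E X^2)$, and Fuk--Nagaev-type bounds give at best polynomial-in-$n$ plus $\exp(-\Theta(\epsilon/c_k))$; dividing by $L(n)n^\alpha$ (which grows like $n^\alpha$ up to slow variation) sends both to $0$, not to a finite negative limit, as $n\to\infty$ with $k$ fixed. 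So the quantity you need, $\limsup_n L(n)^{-1}n^{-\alpha}\log\P(\|\bar K_n^k\|_\infty>\epsilon,\mathcal E_{n,k})$, would be $0$ for every $k$, not something tending to $-\infty$. The tool that does work is exactly Lemma~\ref{lemma:negligence-positive} (and its companion Lemma~\ref{lemma:negligence-negative}): an exponential-Chebyshev estimate with tilting parameter $s=\gamma q(n\epsilon)/(n\epsilon)$ chosen proportional to $L(n)n^{\alpha-1}$, exploiting the standing assumption that $L(x)x^{\alpha-1}$ is eventually non-increasing. This is \emph{not} a standard Bennett/Fuk--Nagaev bound.

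There is also a factual misconception that sent you down this harder road. You say the concentration bound of \cite{jelenkovic2003large} used in the L\'evy case is ``tailored to Poisson random measures rather than discrete-time sums.'' It is not: Lemma~\ref{lemma:negligence-positive} and Lemma~\ref{lemma:negligence-negative} are stated and proved for i.i.d.\ partial-sum maxima $\max_{1\le j\le 2n}\sum_{i=1}^j(\cdot)$, with the Poisson process entering the L\'evy proof only through the random index $N(nt)$, which is separately controlled by its own exponential concentration. If you realized this, you could invoke those lemmas directly for the walk increments $X_i$ (with minor care for the left tail and the absence of the $-\E Z$ centering term, since $\E X_1=0$), and your direct decomposition would become rigorous --- but it would then essentially reproduce the L\'evy-case proof while missing the shortcut the paper actually uses.
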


Similarly to the case of L\'evy processes, the proof of Theorem~\ref{thm:random-walk} is facilitated by Lemma~\ref{lemma:rw-lem1}, Lemma~\ref{lemma:rw-lem3}, and Lemma~\ref{lemma:rw-lem2}; their proofs are deferred to Section~\ref{sec:proof-for-rw-lemmas}.
Let $\tilde Q^\gets(x) = \inf\{y\geq 0: \P(Z \geq y) < x\}$. 
Set
$$
\tilde J_n^k(t) \triangleq \frac1n \sum_{i=1}^k \tilde Q^\gets (V_{(i)}) \I_{[U_i,1]}(t)+\frac1n Z \I_{\{1\}}(t) 
$$ 
and
$$
\tilde H_n^k(t) \triangleq \frac1n \sum_{i=k+1}^{n-1} \tilde Q^\gets (V_{(i)}) \I_{[U_i,1]}(t)
 -\frac 1n \E Z \sum_{i=1}^{n-1}\I_{[U_i,1]}(t) - \frac1n \E Z \I_{\{1\}}(t)
 $$
where $V_{(1)}, V_{(2)},\ldots, V_{(n-1)}$ are the order statistics (in ascending order) of $V_1,V_2,\ldots, V_{n-1}$, which are i.i.d.\ Uniform$[0,1]$ and independent of $Z$.
(In the proof of Theorem~\ref{thm:random-walk}, we will show that $\tilde S_n \triangleq \tilde J_n^k + \tilde H_n^k$ satisfies the extended LDP with speed $L(n)n^\alpha$  and is \chck exponentially equivalent to $\bar S_n$ so that Corollary~\ref{corollary-to-prop21} applies, and hence, in turn, $\bar S_n$ satisfies the same extended LDP.)
Let $\tilde I_k$ be defined as follows:
\begin{equation}\label{eq:rate-function-for-Ik-tilde}
\tilde I_k(\xi) =
\begin{cases}
\sum_{t: \xi(t) \neq  \xi(t-)}\left(\xi(t)-\xi(t-)\right)^\alpha
&
\text{if \ $\xi\in \tilde\D_{\leqslant k}$},
\\
\infty
&
otherwise.
\end{cases}
\end{equation}
\begin{lemma}\label{lemma:rw-lem1}
For each fixed $k$, $\tilde J_n^k$ satisfies the LDP in $(\mathbb D, \mathcal T_{J_1})$ with speed $L(n)n^\alpha$ and the rate function $\tilde I_k$.
\end{lemma}
\begin{lemma}\label{lemma:rw-lem3}
For each $\delta > 0$ and each open set $G$, there exist $\epsilon>0$ and $K\ge 0$ such that for any $k\ge K$
\begin{equation*}
\inf_{\xi \in G^{-\epsilon}}\tilde I_k(\xi) \le \inf_{\xi \in G}\tilde I(\xi)+\delta.
\end{equation*}
\end{lemma}
%The next lemma shows that $\tilde H_n^k$ is asymptotically negligible.
\begin{lemma}\label{lemma:rw-lem2}
For every $\epsilon>0$ it holds that
$$\lim_{k\to\infty}\limsup_{n\to\infty} \frac{1}{L(n)n^\alpha} \log\P(\| \tilde H_n^k\|_\infty > \epsilon) = -\infty.$$ 
\end{lemma}

With these lemmas in our hands, we are ready to prove Theorem~\ref{thm:random-walk}.
\begin{proof}[Proof of Theorem~\ref{thm:random-walk}]
%The idea is to consider (a centered and scaled version of) a subordinated random walk $S_{N(t)}, t \geq 0$ where $N(t), t \geq 0$ is an independent unit rate Poisson process. 
%Let $T_k$ denote the $k$\textsuperscript{th} jump time of $N(\cdot)$.
Let $\tilde R_i \triangleq |\{j\in \mathbb N: U_j \leq U_i, \ 1\leq j \leq n-1\} |$. Then, we claim that
$$
\bar S_n \stackrel{\mathcal D}{=} \frac 1n \sum_{i=1}^{n-1}\big(\tilde Q^\gets(V_{(i)}) - \E Z\big) \I_{[\tilde R_i/n,1]} + \frac 1n (Z-\E Z) \I_{\{1\}}.
$$
To see why this distributional equality holds, note that $\{\tilde R_1,\ldots, \tilde R_{n-1}\}$ is a uniformly random permutation of $\{1,
\ldots, n-1\}$, and $\{\tilde Q^\gets(V_{(1)}),\ldots, \tilde Q^\gets(V_{(n-1)})\}$ has the same distribution as the order statistics (in descending order) of $Z_1,\ldots, Z_{n-1}$ since $\tilde Q^\gets(V_i)$ has the same distribution as $Z$ for each $i$.
Now, we move on to showing that $\bar S_n$ is close to $\tilde S_n$---i.e., $\P(d_{J_1}(\tilde S_n, \bar S_n) > \epsilon)$ is asymptotically negligible. 
Recall that
$$
\tilde S_n = \frac1n \sum_{i=1}^{n-1} (\tilde Q^\gets(V_{(i)})- \E Z)\I_{[U_i,1]} + \frac1n(Z-\E Z) \I_{\{1\}}.
$$
First observe that
$\tilde R_i$ is the rank of $U_i$ among $U_1,\ldots, U_{n-1}$,
and hence, $\tilde R_i$\textsuperscript{th} earliest jump of $\bar S_n$ and $\tilde S_n$ are both $\tilde Q^\gets(V_{(i)})$.
Therefore, the jumps associated with $\tilde Q^\gets(V_{(1)}),\ldots, \tilde Q^\gets(V_{(n-1)}), Z$ are arranged in the same order for $\bar S_n$ and $\tilde S_n$ with probability 1.
Moreover, the jump times of $\bar S_n$ and $\tilde S_n$ are $\frac1n, \frac2n, \ldots, \frac{n-1}n, \frac nn$ and $U_{(1)}, U_{(2)}, \ldots, U_{(n-1)}, 1$, respectively.
Since $0<U_{(1)}<\cdots<U_{(n-1)}<1$ with probability 1, the piecewise linear time change $\lambda:[0,1]\to[0,1]$ defined by the linear interpolation of $\lambda(0) = 0$, $\lambda(1) = 1$, and $\lambda(i/n) = U_{(i)}$ for $i=1,\ldots,n-1$ is a homeomorphism with probability 1. Therefore, the $J_1$ distance between $\tilde S_n$ and $\bar S_n$ is bounded by 
$$
 \sup_{1\leq i\leq n-1} |i/n -U_{(i)}|
$$
with probability 1.
%
%Let $N(t), t\geq 0,$ be an independent unit rate Poisson process. Define  the L\'evy process $X(t) \triangleq S_{N(t)}, t\geq 0$, and set $\bar X_n(t) \triangleq X(nt)/n, t\geq 0$.
%Then the L\'evy measure $\nu$ of $X$ is $\nu[x,\infty) = \P(S_1 \geq x)$.
%We first note that the $J_1$ distance between $\bar S_n$ and $\bar X_n$ is bounded by $\sup_{t\in [0,1]}  |\lambda_n(t)-t|$ which, in turn, is bounded by $\sup_{t\in [0,1]} |N(tn)/n - t|$.
%
%Note that from  Etemadi's theorem for L\'evy processes (see, for example, Lemma~A.4 in \cite{rheeblanchetzwart2016}),
%\begin{equation*}
%\P(\sup_{t\in [0,1]} |N(tn)/n - t|)>\epsilon) \leq 3\sup_{t\in [0,1]}\P( |N(tn)/n - t|)>\epsilon/3),
%\end{equation*}
%where $\P( |N(tn)/n - t|)>\epsilon/3)$ vanishes at a geometric rate w.r.t.\ $n$ uniformly in $t\in [0,1]$. Hence, $\limsup_{n\to\infty} \frac{1}{L(n)n^\alpha} \log \P(d_{J_1}(\bar S_n,\bar X_n)>\epsilon) = -\infty$.
%
On the other hand, by the sample path LDP for the order statistics of uniform random variables \citep{duffy2011sample}, \chck 
 $\P(\sup_{1\leq i \leq n-1} |i/n - U_{(i)}|>\epsilon)$ decays at an exponential rate, and
hence, 
$$\limsup_{n\to\infty} \frac{1}{L(n)n^\alpha} \log \P(d_{J_1}(\tilde S_n,\bar S_n)>\epsilon) \leq 
\limsup_{n\to\infty} \frac{1}{L(n)n^\alpha} \log \P(\sup_{1\leq i \leq n-1} |i/n - U_{(i)}|>\epsilon)= -\infty.$$
%Now we consider the decomposition \eqref{eq:Poisson-Jump-representation} again.
In view of Corollary~\ref{corollary-to-prop21}, the proof is done if we show that 
 $\tilde S_n$ satisfies the extended LDP with speed $L(n)n^\alpha$ and the rate function $\tilde I$.
To do so, we apply Proposition~\ref{theorem:extended-ldp-from-decomposition}.
Note that Lemma~\ref{lemma:rw-lem1} verifies condition (i) of Proposition~\ref{theorem:extended-ldp-from-decomposition}; (ii) is trivially satisfied since $\tilde I_k \geq \tilde I$; Lemma~\ref{lemma:rw-lem3} verifies (iii); Lemma~\ref{lemma:rw-lem2} verifies (iv). 
Therefore Proposition~\ref{theorem:extended-ldp-from-decomposition}  applies to $\tilde J_n^k + \tilde H_n^k$, and the proof of Theorem~\ref{thm:random-walk} is complete. 
%This concludes the proof of Theorem~\ref{thm:random-walk}.
\end{proof}

%%%%%%%%%%%%%%%%%%%%%%%%%%%%%%%%%%%%%%%%%%%%%%%%%%%%%%%%%%%%%%%%%%%%%%%%%%%%%%%%%%%%%%%%%%
%
% Section Multiple dimension
%
%%%%%%%%%%%%%%%%%%%%%%%%%%%%%%%%%%%%%%%%%%%%%%%%%%%%%%%%%%%%%%%%%%%%%%%%%%%%%%%%%%%%%%%%%%
\subsection{Multi-dimensional processes}\label{subsec:multidimensional-processes}
Let $X^{(1)},\ldots,X^{(d)}$ be independent processes satisfying assumptions $A1$ and $A2$.
 %real-valued L\'evy processes with a L\'evy measure $\nu$.
%As in the previous sections, we assume that $\nu$ has  non-negative support, Weibull tail distribution with shape parameter $\alpha$ in $(0,1)$, and that $L(x)x^{\alpha-1}$ is non-increasing for large enough $x$'s.
Let $\bar X_n^{(i)}(t)$ denote the centered and scaled processes:
$$
\bar X_n^{(i)}(t) \triangleq \frac 1n X^{(i)}(nt) - t \E X^{(i)}(1).
$$
The next theorem establishes the extended LDP for $(\bar X_n^{(1)},\ldots,\bar X_n^{(d)})$.

\begin{theorem}\label{theorem:multi-d-levy-processes}
$\left(\bar{X}^{(1)}_n,\bar{X}^{(2)}_n,\dots,\bar{X}^{(d)}_n\right)$ satisfies the extended LDP in $\big(\prod_{i=1}^{d}\mathbbm{D}\left([0,1],\R_+\right), \prod_{i=1}^d \mathcal T_{J_1}\big)$ with speed $L(n)n^\alpha$ and the rate function
\begin{equation}\label{eq:rate-function-Id}
I^d(\xi_{1},...,\xi_{d})=
\begin{cases}
\sum_{j=1}^{d}\sum_{t\in[0,1]}\left( \xi_{j}(t)- \xi_{j}(t-)\right)^\alpha  & \text{if } \xi_j \in \D_{\leqslant\infty}\ \text{ for each }j=1,\ldots,d,
 \\
\infty, & otherwise.
\end{cases}
\end{equation}
\end{theorem}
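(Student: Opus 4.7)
The plan is to apply Proposition~\ref{theorem:extended-ldp-from-decomposition} on the product space $\prod_{i=1}^d \D$ equipped with the product $J_1$ topology (metrized, for concreteness, by $d((\xi_i)_i,(\eta_i)_i) = \max_i d_{J_1}(\xi_i,\eta_i)$), taking as approximating sequence $Y_n^k \triangleq (\bar J_n^{k,(1)},\ldots,\bar J_n^{k,(d)})$, where $\bar J_n^{k,(i)}$ is the ``$k$ largest jumps'' extraction from $\bar X_n^{(i)}$ defined exactly as in the proof of Theorem~\ref{theorem:1d-extended-ldp}. The central structural fact is that the coordinates are independent, so conditions (i)--(iv) will be lifted from the one-dimensional statements of Lemmas~\ref{lemma:equivalence-J}--\ref{lemma:negligence-K} by combining them coordinate-wise.

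For condition (i), independence of $X^{(1)},\ldots,X^{(d)}$ transfers to independence of $\bar J_n^{k,(1)},\ldots,\bar J_n^{k,(d)}$, and Lemma~\ref{lemma:equivalence-J} gives an LDP for each coordinate with rate function $I_k$. A brief check shows that $I_k$ is a \emph{good} rate function: its sublevel set $\{I_k\leq c\}$ is the continuous image in $(\D,\mathcal T_{J_1})$ of the compact parameter region $\{(t_1,\ldots,t_k,x_1,\ldots,x_k)\in[0,1]^k\times[0,c^{1/\alpha}]^k\}$. The standard independent-product LDP (e.g.\ Exercise~4.2.7 in \citealp{dembo2010large}) then yields that $Y_n^k$ satisfies the LDP on the product space with speed $L(n)n^\alpha$ and rate function $I_k^d(\xi_1,\ldots,\xi_d) = \sum_{j=1}^d I_k(\xi_j)$. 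Condition (ii) is immediate because $I_k^d(\xi_1,\dots,\xi_d)=\sum_j I_k(\xi_j)\geq \sum_j I(\xi_j)=I^d(\xi_1,\dots,\xi_d)$.

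For condition (iii), fix an open $G\subseteq \prod_i \D$ and $\delta > 0$, and choose $\xi^*=(\xi_1^*,\ldots,\xi_d^*)\in G$ with $I^d(\xi^*) < \inf_G I^d + \delta/2$ (if no such $\xi^*$ exists the claim is trivial). Since products of $J_1$-open balls form a base for the product topology, pick $r_i>0$ with $\prod_i B_{J_1}(\xi_i^*,r_i) \subseteq G$. Applying Lemma~\ref{lemma:Jk-approximates-J} to the one-dimensional open set $B_{J_1}(\xi_i^*,r_i)$ with tolerance $\delta/(2d)$ produces $\epsilon_i>0$ and $K_i$ such that for $k\ge K_i$ some $\eta_i\in B_{J_1}(\xi_i^*,r_i)^{-\epsilon_i}$ satisfies $I_k(\eta_i)\le I(\xi_i^*)+\delta/(2d)$. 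Setting $\epsilon=\min_i \epsilon_i$ and $K=\max_i K_i$, the product point $\eta=(\eta_1,\ldots,\eta_d)$ lies in $G^{-\epsilon}$ under the max metric, and $I_k^d(\eta) \le I^d(\xi^*)+\delta/2 < \inf_G I^d + \delta$.

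Condition (iv) reduces via a union bound to the one-dimensional estimate: since $d((\bar X_n^{(i)})_i,(\bar J_n^{k,(i)})_i) = \max_i d_{J_1}(\bar X_n^{(i)}, \bar J_n^{k,(i)})$,
$$
\P\bigl(d((\bar X_n^{(i)})_i,(\bar J_n^{k,(i)})_i)>\epsilon\bigr) \le \sum_{i=1}^d \P\bigl(d_{J_1}(\bar X_n^{(i)}, \bar J_n^{k,(i)}) > \epsilon\bigr),
$$
and each term is controlled as in the proof of Theorem~\ref{theorem:1d-extended-ldp} by the bound $d_{J_1}(\bar X_n^{(i)}, \bar J_n^{k,(i)}) \le \|\bar K_n^{k,(i)}\|_\infty + \|\bar R_n^{(i)}\|_\infty$, combined with Lemma~\ref{lemma:negligence-K} and the exponential decay of the residual $\bar R_n^{(i)}$. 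The only step requiring genuine care is condition (iii), where the coordinate-wise Lemma~\ref{lemma:Jk-approximates-J} must be consolidated into a single $(\epsilon,K)$ by threading the construction through a basic product neighborhood inside $G$; the rest is a routine product lifting.
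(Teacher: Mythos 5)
The verification of condition (i) has a genuine gap. You claim that $I_k$ is a good rate function on $(\mathbb{D},\mathcal{T}_{J_1})$ because $\{I_k\le c\}$ is a ``continuous image'' of the compact set $[0,1]^k\times[0,c^{1/\alpha}]^k$ under the map $T_k(x,u)=\sum_{i=1}^k x_i\mathbbm{1}_{[u_i,1]}$. Both halves of this claim fail. First, $T_k$ is \emph{not} $J_1$-continuous: if two jump times collide, the image jumps discontinuously (e.g.\ $\mathbbm{1}_{[1/2,1]}+\mathbbm{1}_{[1/2+\epsilon,1]}$ stays at $J_1$-distance at least $1$ from $2\cdot\mathbbm{1}_{[1/2,1]}$ as $\epsilon\downarrow 0$). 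Second, and more fundamentally, $I_k$ is simply not good: the sequence $\zeta_n = c^{1/\alpha}\mathbbm{1}_{[1/n,1]}$ lies in $\{I_k\le c\}\subseteq\mathbb{D}_{\leqslant k}$ but has no $J_1$-convergent subsequence (its natural limit would place a jump at $t=0$, which $J_1$ keeps bounded away from any jump at $1/n>0$). Since exponential tightness together with an LDP forces goodness of the rate function, this also tells you that $\{\bar J_n^{k\,(i)}\}$ is \emph{not} exponentially tight, so the hypothesis of the ``standard independent-product LDP'' you invoke (which requires good rate functions, precisely to control the upper bound on non-rectangular closed sets) is not met. Conditions (ii)--(iv) of your write-up are fine, but (i) is not established by your argument.

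The paper avoids this trap by never invoking a product LDP at the path-space level. Instead, the independence is exploited at the \emph{finite-dimensional} level: the vectors $Z_n^{(i)}=(Q_n^\gets(\Gamma_1^{(i)})/n,\ldots,Q_n^\gets(\Gamma_k^{(i)})/n,U_1^{(i)},\ldots,U_k^{(i)})$ satisfy LDPs in $\R_+^k\times[0,1]^k$ with \emph{good} rate function $\hat I_k$ (Corollary~\ref{lemma:ldp-for-k-biggest-jump-sizes-and-their-times}), and there the product LDP (Theorem~4.14 of \citealp{ganesh2004big}) applies legitimately. The passage to path space is then done without the contraction principle, since $T_k$ is not continuous; the upper bound in Lemma~\ref{lemma:sample-path-ldp-for-J-hat} (mirrored in Lemma~\ref{lemma:multi-Jk-process}) carefully shows $\inf_{T_k^{-1}(F)}\hat I_k=\inf_{T_k^{-1}(F)^-}\hat I_k$ by a permutation/closure argument. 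If you want to salvage your outline, you should replace your step (i) by citing Lemmas~\ref{lemma:multi-d-equivalence-J} and~\ref{lemma:multi-Jk-process} directly, or redo the product-LDP step at the finite-dimensional level as the paper does; you cannot bootstrap it from goodness of $I_k$ in path space.
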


For each $i$, we consider the same distributional decomposition of $\bar X_n^{(i)}$ as in Section~\ref{subsection:extended-ldp-1-d-levy-processes}:
\begin{equation*}%\label{eq:Poisson-Jump-representation}
\bar X_n^{(i)} \stackrel{\mathcal D}{=}  \bar J_n^{k\,(i)} + \bar H_n^{k\,(i)} + \bar R_n^{(i)}.
\end{equation*}
The proof of the theorem is immediate as in the one dimensional case, from Proposition~\ref{theorem:extended-ldp-from-decomposition}, Lemma~\ref{lemma:negligence-K}, and the following lemmas that parallel Lemma~\ref{lemma:equivalence-J} and Lemma~\ref{lemma:Jk-approximates-J}.

\begin{lemma}\label{lemma:multi-d-equivalence-J}
$\left(\bar J_n^{k\,(1)},\ldots,\bar J_n^{k\,(d)}\right)$ satisfy the LDP in $\big(\prod_{i=1}^d\mathbb D, \prod_{i=1}^d\mathcal T_{J_1}\big)$ with speed $L(n)n^\alpha$ and the rate function $I_k^d:\prod_{i=1}^d\mathbb D\to [0,\infty]$
\begin{equation}
I_k^d(\xi_1,\ldots,\xi_d)
\triangleq
\begin{cases}
\sum_{i=1}^d\sum_{t\in[0,1]}\left(\xi_i(t)-\xi_i(t-)\right)^\alpha
&
\text{if \ $\xi_i\in \mathbb D_{\leqslant k}$ for $i=1,\ldots,d$},
\\
\infty
&
otherwise.
\end{cases}
\end{equation}

\end{lemma}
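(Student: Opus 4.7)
The plan is to reduce the multidimensional statement to the one-dimensional LDP of Lemma~\ref{lemma:equivalence-J} by exploiting independence of the coordinate processes. Because $X^{(1)},\ldots,X^{(d)}$ are independent L\'evy processes, their truncated big-jump processes $\bar J_n^{k\,(1)},\ldots,\bar J_n^{k\,(d)}$ are independent $\mathbb D$-valued random elements. Lemma~\ref{lemma:equivalence-J} asserts that each marginal $\bar J_n^{k\,(i)}$ satisfies the LDP in $(\mathbb D,\mathcal T_{J_1})$ with the same speed $L(n)n^\alpha$ and rate function $I_k$.

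The joint LDP then follows from the standard product principle for LDPs of independent random variables: if the coordinates are independent and each satisfies an LDP with good rate function $I_k$ at speed $a_n$, then the vector satisfies an LDP on the product space $\prod_{i=1}^d \mathbb D$ equipped with the product topology, with rate function $\sum_{i=1}^d I_k(\xi_i)$ (see, e.g., Exercise~4.2.7 of \citealp{dembo2010large}). Since $\sum_{i=1}^d I_k(\xi_i)$ is $+\infty$ unless every $\xi_i \in \mathbb D_{\leqslant k}$, in which case it equals $\sum_{i=1}^d \sum_{t\in[0,1]}(\xi_i(t)-\xi_i(t-))^\alpha$, the resulting rate function is precisely $I_k^d$.

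The only condition requiring explicit verification is goodness of $I_k$. Lower semicontinuity has the same proof as in the one-dimensional case handled in Lemma~\ref{lemma:equivalence-J} (and is a consequence of the lower semicontinuity of $I$ combined with the restriction to at most $k$ jumps). For compactness of a sublevel set $\{\xi : I_k(\xi)\leq a\}$, note that every such $\xi$ is a non-decreasing step function with at most $k$ jumps in $[0,1)$ and individual jump sizes bounded by $a^{1/\alpha}$ (since $(\xi(t)-\xi(t-))^\alpha \leq a$). Parametrizing $\xi$ by its ordered jump times and sizes in the compact set
\[
K_a \triangleq \bigl\{(t_1,\ldots,t_k,s_1,\ldots,s_k) : 0\leq t_1\leq\cdots\leq t_k\leq 1,\; s_i\geq 0,\; \textstyle\sum_i s_i^\alpha\leq a\bigr\},
\]
the map from $K_a$ to $(\mathbb D,\mathcal T_{J_1})$ sending parameters to the associated pure jump function is continuous, so the sublevel set is the continuous image of a compact set and hence $J_1$-compact.

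I expect no real obstacle: the entire content of the multidimensional statement is the one-dimensional Lemma~\ref{lemma:equivalence-J} plus a routine invocation of the product LDP, and the mildly technical point—goodness of $I_k$—is straightforward because at most $k$ jumps on a compact interval with bounded $\alpha$-sum of sizes parametrize a genuinely finite-dimensional compact family.
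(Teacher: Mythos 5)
There is a genuine gap: the rate function $I_k$ is \emph{not} good on $(\mathbb D,\mathcal T_{J_1})$, so the product LDP for independent sequences — which requires either goodness of the rate functions or exponential tightness — cannot be invoked. The compactness argument is the false step: the map from $K_a$ to $\mathbb D$ sending ordered jump times and sizes to $\sum_i s_i\I_{[t_i,1]}$ is \emph{not} $J_1$-continuous at parameter points where two jump times coincide (or where a jump time touches $\{0,1\}$). Concretely, take $\xi_n = \I_{[1/2,1]} + \I_{[1/2+1/n,1]}$: each $\xi_n\in\mathbb D_{\leqslant 2}$ with $I_k(\xi_n)=2$, the parameters converge to $(1/2,1/2,1,1)$, but $d_{J_1}(\xi_n,\, 2\I_{[1/2,1]})\geq 1/2$ for all $n$, and one can check $\{\xi_n\}$ has no $J_1$-convergent subsequence at all. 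Hence $\Psi_{I_k}(2)$ is not compact, $I_k$ is not good, and — since the jump times $U_1,\ldots,U_k$ are uniform and can be arbitrarily close with probability independent of $n$ — $\bar J_n^k$ is not exponentially tight in $J_1$ either, so no variant of the product LDP on $\mathbb D$ applies directly.

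The paper avoids this exactly because of the above. It applies the product LDP one level down, to the finite-dimensional vector of scaled jump sizes and times $(Q_n^\gets(\Gamma_j)/n, U_j)_{j\le k}$ for each coordinate process, living in $\R_+^k\times[0,1]^k$ where the rate function $\hat I_k$ \emph{is} good (Corollary in Section 6.6, via Theorem 4.14 of Ganesh). It then transfers the LDP to $\prod_{i=1}^d\mathbb D$ through the (non-continuous) map $T_k$ using the closure/interior arguments of Lemma~\ref{lemma:sample-path-ldp-for-J-hat}, which is the content of Lemma~\ref{lemma:multi-Jk-process}. Your reduction would work if you first established the product LDP on $\prod_i(\R_+^k\times[0,1]^k)$ and then pushed forward as the paper does; the shortcut of invoking the product principle directly on $\mathbb D$ is not available.
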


\begin{lemma}\label{eq:extspldplbt}
For each $\delta > 0$ and each open set $G$, there exists $\epsilon >0$ and $K \ge 0$ such that for any $k \ge K$
\begin{equation}
\inf_{(\xi_{1},\ldots,\xi_{d}) \in G^{-\epsilon}}I_{k}^d(\xi_1,\ldots,\xi_d) \le \inf_{(\xi_1,\ldots,\xi_d) \in G} I^d(\xi_1,\ldots,\xi_d)+\delta.
\end{equation}
\end{lemma}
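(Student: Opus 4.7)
The statement is the direct $d$-dimensional analogue of Lemma~\ref{lemma:Jk-approximates-J}, and I would prove it by running the one-dimensional truncation argument coordinate-by-coordinate and gluing through the product topology. First, dispose of the trivial case $\inf_G I^d=\infty$. Otherwise fix $\delta>0$ and pick a near-minimizer $\xi^*=(\xi_1^*,\ldots,\xi_d^*)\in G$ with $I^d(\xi^*)\le \inf_{G}I^d+\delta/2$. Because $I^d(\xi^*)<\infty$, each $\xi_j^*\in \mathbb D_\infty^\uparrow$ is a nondecreasing pure jump function with $\sum_t(\xi_j^*(t)-\xi_j^*(t-))^\alpha<\infty$; since $\alpha\in(0,1)$, this forces $\sum_t(\xi_j^*(t)-\xi_j^*(t-))<\infty$, so the jumps of each $\xi_j^*$ are absolutely summable.

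Next, for each $k\ge 1$ and each $j$ let $\xi_j^{(k)}\in \mathbb D_{\leqslant k}$ be the path obtained from $\xi_j^*$ by retaining only its $k$ largest jumps (ties broken arbitrarily). Restricting a sum of nonnegative terms to a subset of indices gives
\begin{equation*}
I_k^d\bigl(\xi_1^{(k)},\ldots,\xi_d^{(k)}\bigr)=\sum_{j=1}^d\sum_{\text{top $k$ jumps of }\xi_j^*}\bigl(\Delta\xi_j^*\bigr)^\alpha\le I^d(\xi^*)\le \inf_G I^d+\tfrac{\delta}{2}.
\end{equation*}
On the topological side, $\xi_j^*-\xi_j^{(k)}$ is itself a nondecreasing step function whose total variation equals the sum of the dropped jumps, so by absolute summability
\begin{equation*}
\bigl\|\xi_j^*-\xi_j^{(k)}\bigr\|_\infty \le \sum_{\text{dropped jumps of }\xi_j^*}\Delta\xi_j^*\ \xrightarrow[k\to\infty]{}\ 0,
\end{equation*}
and since $d_{J_1}\le\|\cdot\|_\infty$ on $\mathbb D$, each coordinate converges in $J_1$.

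Finally I would exploit the openness of $G$. In the product topology there exists $\eta>0$ with $\prod_{j=1}^d B_{J_1}(\xi_j^*,\eta)\subseteq G$. Choose $K$ large enough that $k\ge K$ forces $\|\xi_j^*-\xi_j^{(k)}\|_\infty<\eta/2$ for every $j$. With the standard max-metric on the product space, the ball of radius $\eta/2$ around $(\xi_1^{(k)},\ldots,\xi_d^{(k)})$ is contained in $\prod_{j=1}^d B_{J_1}(\xi_j^*,\eta)\subseteq G$, i.e.\ $(\xi_1^{(k)},\ldots,\xi_d^{(k)})\in G^{-\eta/2}$. Setting $\epsilon:=\eta/2$ combines this with the earlier rate bound to yield the desired inequality.

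The main obstacle, as already in the one-dimensional Lemma~\ref{lemma:Jk-approximates-J}, is arranging a single truncation scheme that simultaneously (a) keeps $I_k^d$ close to $I^d(\xi^*)$ and (b) approximates $\xi^*$ uniformly; both rely crucially on the fact that $\alpha<1$ upgrades the finiteness of $\sum(\Delta\xi_j^*)^\alpha$ to absolute summability of the raw jumps. Once that step is in place, the passage from one dimension to the $d$-fold product is routine because the product $J_1$ topology is generated by the max-metric, for which the $\eta$-ball factorizes as the product of coordinate $\eta$-balls.
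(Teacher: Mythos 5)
Your proof is correct and is essentially the argument the paper intends: the paper explicitly omits the proof with the remark that it is ``completely analogous to the one-dimensional case,'' and your proposal is exactly that—pick a near-minimizer, truncate each coordinate to its $k$ largest jumps, invoke $\alpha<1$ for absolute summability of the raw jump sizes, and glue via the max-metric on the $d$-fold product. The only cosmetic deviation is your use of a $\delta/2$ slack where the paper's one-dimensional Lemma~\ref{lemma:Jk-approximates-J} uses the full $\delta$, which changes nothing substantive.
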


We conclude this section with the extended LDP for multidimensional random walks.
Let $S_n^{(i)} = Z_1^{(i)}+\cdots+Z_n^{(i)}$ be a random walk with non-negative increments for each $i=1,\ldots,d$.
Consider $\bar S_n^{(i)} = \{\bar S_n^{(i)}(t), t\in [0,1]\}$ where $\bar S_n^{(i)}(t) = \frac 1n\sum_{j=1}^{[nt]} \big(Z_j^{(i)} - \E Z^{(i)}\big)$.
We assume that $\P(Z_j^{(i)} \geq x) = \exp(-L(x)x^\alpha)$ where $\alpha \in (0,1)$ and $L(\cdot)$ is a slowly varying function, and $A2$ is in force.
The following theorem can be obtained by ``lifting'' Lemma~\ref{lemma:rw-lem1} and Lemma~\ref{lemma:rw-lem3}
%Lemma~\ref{lemma:rw-lem2} 
to the multi-dimensional context---in the same way Lemma~\ref{lemma:equivalence-J} and Lemma~\ref{lemma:Jk-approximates-J}
%Lemma~\ref{lemma:negligence-K} 
were lifted to 
the multi-dimensional counterparts in the proof of Theorem~\ref{theorem:multi-d-levy-processes}---and then applying Proposition~\ref{theorem:extended-ldp-from-decomposition}.
%  Lemma~\ref{eq:extspldplbt}, (the multivariate versions of) Lemma~\ref{lemma:rw-lem1}~and~\ref{lemma:rw-lem2}, together with Proposition~\ref{theorem:extended-ldp-from-decomposition} in the same way as in Section~\ref{subsec:extended-ldp-for-random-walks}.
Let 
\begin{equation}\label{eq:rate-function-tilde-Id}
\tilde I^d(\xi_{1},...,\xi_{d})=
\begin{cases}
\sum_{j=1}^{d}\sum_{t\in[0,1]}\left( \xi_{j}(t)- \xi_{j}(t-)\right)^\alpha  & \text{if } \xi_j \in \tilde \D_{\leqslant \infty} \text{ for each }j=1,\ldots,d,
 \\
\infty & otherwise.
\end{cases}
\end{equation}
\begin{theorem}\label{thm:d-dim-random-walk}
$\left(\bar S_n^{(1)},\bar S_n^{(2)},\ldots,\bar S_n^{(d)}\right)$ satisfies the extended LDP in $(\prod_{i=1}^d\mathbb D, \prod_{i=1}^d\mathcal T_{J_1})$ with speed $L(n)n^\alpha$ and the rate function $\tilde I^d$.
\end{theorem}

\begin{remark}
Note that Theorem~\ref{theorem:multi-d-levy-processes} and Theorem~\ref{thm:d-dim-random-walk} can be extended to heterogeneous processes.
For example, if the L\'evy measure $\nu^{(i)}$ of the process $X^{(i)}$ has Weibull tail distribution $\nu^{(i)}[x,\infty) = \exp(-c_i L(x) x^\alpha)$ where $c_i \in (0,\infty)$ for each $i \leq d_0  \leq d$, and all the other processes have lighter tails---i.e., $L(x)x^{\alpha} = o(L_i(x)x^{\alpha_i})$ for $i>d_0$---then it is straightforward to check that $(\bar X_n^{(1)},\ldots, \bar X_n^{(d)})$ satisfies the extended LDP with the rate function
\begin{equation*}%\label{eq:hetero-rate-function-Id}
I^d(\xi_{1},...,\xi_{d})=
\begin{cases}
\sum_{j=1}^{d_0}c_j\sum_{t\in[0,1]}\left( \xi_{j}(t)- \xi_{_j}(t-)\right)^\alpha  
& \text{if } \xi_j \in \D_{\leqslant\infty}[0,1] \text{ for }j=1,\ldots,d_0
\\
&\hfill\text{and } \xi_j \equiv 0 \text{ for }j>d_0,
 \\
\infty & otherwise.
\end{cases}
\end{equation*}
Similarly, $(\bar S_n^{(1)},\ldots,\bar S_n^{(d)})$ satisfies the extended LDP with the rate function $\tilde I^d$ defined by replacing $\D_{\leqslant \infty}$ with $\tilde \D_{\leqslant \infty}$ in the definition of $I^d$ above under the corresponding conditions on the tail distribution of $Z_1^{(i)}$'s.
\end{remark}

%%%%%%%%%%%%%%%%%%%%%%%%%%%%%%%%%%%%%%%%%%%%%%%%%%%%%%%%%%%%%%%%%%%%%%%%%%%%%%%%%%%%%%%%%%
%
%
% Section Implications and Further Discussions
%
%
%%%%%%%%%%%%%%%%%%%%%%%%%%%%%%%%%%%%%%%%%%%%%%%%%%%%%%%%%%%%%%%%%%%%%%%%%%%%%%%%%%%%%%%%%%
\section{Implications and further discussions}

%%%%%%%%%%%%%%%%%%%%%%%%%%%%%%%%%%%%%%%%%%%%%%%%%%%%%%%%%
%
% Subsection LDP for Lipschitz functions
%
%%%%%%%%%%%%%%%%%%%%%%%%%%%%%%%%%%%%%%%%%%%%%%%%%%%%%%%%%
\subsection{LDP for Lipschitz functions of L\'evy processes}
Let $\bar{\mathbf  X}_n$ denote the scaled L\'evy processes $(\bar X_n^{(1)},\ldots,\bar X_n^{(d)})$, and $\bar {\mathbf S}_n$ denote the scaled random walks $(\bar S_n^{(1)},\ldots,\bar S_n^{(d)})$ as defined in Section~\ref{section:sample-path-ldps}. 
Recall also the rate functions $I^d$ defined in (\ref{eq:rate-function-Id}) and $\tilde I^d$ defined in (\ref{eq:rate-function-tilde-Id}).
\begin{corollary}\label{theorem:contraction-principle}
Let $(\mathbb S, d)$ be a metric space and $\phi: \prod_{i=1}^d \D \to \mathbb S$ be a Lipschitz continuous mapping w.r.t.\ the $J_1$ Skorokhod metric.
%\what{[$d_{J_1}^d$ is the product metric of $J_1$ Skorokhod metric. We may want a better notation.]}
Set
$$I'(x) \triangleq \inf_{\phi(\xi) = x}I^d(\xi) \qquad \text{and} \qquad \tilde I'(x) \triangleq \inf_{\phi(\xi) = x}\tilde I^d(\xi) $$
and suppose that $I'$ (or $\tilde I'$) is a good rate function---i.e., $\Psi_{I'}(a)\triangleq \{x\in \mathbb S: I'(s) \leq a \}$ (or $ \Psi_{\tilde I'}(a)\triangleq \{x\in \mathbb S: \tilde I'(s) \leq a \}$) is compact for each $a\in [0,\infty)$.
Then, $\phi\left(\bar {\mathbf X}_n \right)$ (or $\phi\left(\bar {\mathbf S}_n\right)$) satisfies the large deviation principle in $(\mathbb S, d)$ with speed $L(n)n^\alpha$ and the good rate function $I'$ (or $\tilde I'$).
\end{corollary}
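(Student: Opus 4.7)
The plan is to deduce this corollary from the extended LDP of Theorem~\ref{theorem:multi-d-levy-processes} (and Theorem~\ref{thm:random-walk} for the random walk case) by exploiting Lipschitz continuity of $\phi$ to upgrade the extended LDP into a full LDP on the image side. Since the L\'evy and random walk arguments are identical once the extended LDP is in hand, I would present only the case of $\bar{\mathbf X}_n$.

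First, I would dispose of the lower bound. For any open $G \subseteq \mathbb S$, continuity of $\phi$ makes $\phi^{-1}(G)$ open in $\prod_{i=1}^d \mathbb D$ under the product $J_1$ topology, and the extended LDP's lower bound together with the definition of $I'$ gives
\begin{equation*}
\liminf_{n\to\infty}\frac{\log \P(\phi(\bar{\mathbf X}_n) \in G)}{L(n)n^\alpha}
\;\ge\; -\inf_{\xi \in \phi^{-1}(G)} I^d(\xi)
\;=\; -\inf_{x \in G} I'(x).
\end{equation*}

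The real content is the upper bound. Let $F \subseteq \mathbb S$ be closed and let $L_\phi$ denote the Lipschitz constant of $\phi$. Then $\phi^{-1}(F)$ is closed and the extended LDP upper bound yields
\begin{equation*}
\limsup_{n\to\infty}\frac{\log \P(\phi(\bar{\mathbf X}_n) \in F)}{L(n)n^\alpha}
\;\le\; -\lim_{\epsilon \downarrow 0}\inf_{\xi \in (\phi^{-1}(F))^\epsilon} I^d(\xi).
\end{equation*}
The key step is to convert the path-space fattening into an image-space fattening: if $\xi \in (\phi^{-1}(F))^\epsilon$, pick $\xi' \in \phi^{-1}(F)$ with $d_{J_1}(\xi,\xi') \le \epsilon$; Lipschitz continuity then forces $d(\phi(\xi),\phi(\xi')) \le L_\phi \epsilon$, i.e., $\phi(\xi) \in F^{L_\phi \epsilon}$. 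Consequently $(\phi^{-1}(F))^\epsilon \subseteq \phi^{-1}(F^{L_\phi\epsilon})$, and by the definition of $I'$,
\begin{equation*}
\inf_{\xi \in (\phi^{-1}(F))^\epsilon} I^d(\xi)
\;\ge\; \inf_{\xi \in \phi^{-1}(F^{L_\phi\epsilon})} I^d(\xi)
\;=\; \inf_{x \in F^{L_\phi\epsilon}} I'(x).
\end{equation*}
Finally, since $I'$ is a good rate function and $F$ is closed, a standard compactness argument (pick near-minimizers $x_\epsilon \in F^{L_\phi \epsilon}$ with bounded $I'$-values, extract a subsequence converging to some $x^\star \in F$ by goodness of $I'$, and invoke lower semicontinuity) gives $\lim_{\epsilon \downarrow 0}\inf_{x \in F^{L_\phi\epsilon}} I'(x) = \inf_{x \in F} I'(x)$, yielding the matching upper bound.

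The main obstacle is the conversion of the $\epsilon$-fattening, and it is precisely where the Lipschitz hypothesis is used in an essential way: mere continuity of $\phi$ does not control how large $(\phi^{-1}(F))^\epsilon$ can be relative to any preimage $\phi^{-1}(F^{\delta(\epsilon)})$, so the $\lim_{\epsilon \downarrow 0}$ on the right-hand side of the extended LDP could fail to collapse to $\inf_{x \in F} I'(x)$. The assumption that $I'$ is a good rate function is the second indispensable ingredient, as it is what allows the fattened image-space infima to be passed to the limit. The argument for $\phi(\bar{\mathbf S}_n)$ proceeds verbatim from Theorem~\ref{thm:random-walk}.
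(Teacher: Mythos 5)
Your proof is correct, and it takes a cleaner route than the paper's. Where the paper unpacks the decomposition $\bar{\mathbf X}_n \stackrel{\mathcal D}{=} \bar{\mathbf J}_n^k + \bar{\mathbf K}_n^k + \bar{\mathbf R}_n$ again inside the proof of the corollary --- using Lipschitz continuity to control $d_{J_1}(\phi(\bar{\mathbf J}_n^k),\phi(\bar{\mathbf X}_n))$, applying the LDP for $\bar{\mathbf J}_n^k$, the principle of the largest term, and Lemma~\ref{lemma:negligence-K} to send $k\to\infty$ --- you instead treat the extended LDP of Theorem~\ref{theorem:multi-d-levy-processes} as a black box and only then invoke Lipschitz continuity, via the set inclusion $(\phi^{-1}(F))^\epsilon \subseteq \phi^{-1}(F^{L_\phi\epsilon})$, to transfer the $\epsilon$-fattening from path space to $\mathbb S$. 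Both approaches arrive at the same intermediate bound $\limsup_n \frac{1}{L(n)n^\alpha}\log\P(\phi(\bar{\mathbf X}_n)\in F) \le -\inf_{x\in F^{c\epsilon}} I'(x)$ (with $c = 2\|\phi\|_\lip$ in the paper, $c = L_\phi$ in yours) and close it with the same standard fact (Lemma 4.1.6 of Dembo--Zeitouni, which your compactness sketch reproduces). The advantage of your argument is modularity: it would apply verbatim to any sequence satisfying an extended LDP in a metric space, without re-entering the internals of the L\'evy/random-walk decomposition. The paper's argument is heavier because it effectively re-proves the extended LDP upper bound along the way, though this does have the minor virtue of making the role of Lemma~\ref{lemma:negligence-K} visible.
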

\begin{proof}
Since the argument for $\phi(\bar {\mathbf S}_n)$ is very similar, we only prove the LDP for $\phi(\bar {\mathbf X}_n)$.
We start with the upper bound.
Suppose that the Lipschitz constant of $\phi$ is $\|\phi\|_\lip$.
Note that since the $J_1$ distance is dominated by the supremum distance, $\|\bar {\mathbf H}_n^k\|_\infty\leq\epsilon$ and $\|\bar {\mathbf R}_n\|_\infty \leq \epsilon$ implies $d_{J_1}\big(\phi(\bar {\mathbf J}_n^{k}),\,\phi(\bar {\mathbf X}_n)\big) \leq 2\epsilon \|\phi\|_\lip$, where $\bar {\mathbf J}_n^k \triangleq  (\bar J_n^{k\,(1)}, \ldots, \bar J_n^{k\,(d)})$, $\bar {\mathbf H}_n^k \triangleq  (\bar H_n^{k\,(1)}, \ldots, \bar H_n^{k\,(d)})$, and $\bar {\mathbf R}_n \triangleq  (\bar R_n^{(1)}, \ldots, \bar R_n^{(d)})$.
Therefore, for any closed set $F$,
\begin{align}
%&
&\P\left(\phi(\bar {\mathbf X}_n)\in F\right)
\nonumber
\\
&
%=\P\left(\phi\left( \bar{\mathbf J}^{k}_n+\bar{\mathbf X}^k_n\right)\in F\right)\nonumber
%\\
%&
\leq
\P\left(\phi\left( \bar{\mathbf X}_n\right)\in F,d_{J_1}\big(\phi(\bar {\mathbf J}_n^{k}),\,\phi(\bar {\mathbf X}_n)\big) \leq 2\epsilon \|\phi\|_\lip\right)+\P\left(d_{J_1}\big(\phi(\bar {\mathbf J}_n^{k}),\,\phi(\bar {\mathbf X}_n)\big) > 2\epsilon \|\phi\|_\lip\right)\nonumber
\\
&		
\leq
\P\left(\phi\left( \bar{\mathbf J}^{k}_n\right)\in F^{2\epsilon\|\phi\|_\lip}\right)
+
\P\left(d_{J_1}\big(\phi(\bar {\mathbf J}_n^{k}),\,\phi(\bar {\mathbf X}_n)\big) > 2\epsilon \|\phi\|_\lip\right)\nonumber
\\
%&		
%\leq
%\P\left( \bar{\mathbf J}^{k}_n\in \phi^{-1}\big(F^{2\epsilon\|\phi\|_\lip}\big)\right)+\P\left(d_{J_1}\big(\phi(\bar {\mathbf J}_n^{k}),\,\phi(\bar {\mathbf X}_n)\big) > 2\epsilon \|\phi\|_\lip\right)
%\nonumber
%\\
&		
\leq
\P\left( \bar{\mathbf J}^{k}_n\in \phi^{-1}\big(F^{2\epsilon\|\phi\|_\lip}\big)\right)
+\P\left(\|\bar {\mathbf H}_n^{k}\|_\infty > \epsilon\right)
+\P\left(\|\bar {\mathbf R}_n\|_\infty > \epsilon\right)
.\nonumber
\end{align}
Since $\P\left(\|\bar {\mathbf R}_n\|_\infty > \epsilon\right)$ decays at an exponential rate and $\P\left(\|\bar {\mathbf H}_n^k\|_\infty > \epsilon\right) \leq \sum_{i=1}^d\P\big(\|\bar { H}_n^{k\,(i)}\|_\infty > \epsilon/d \big)$, we get the following bound by applying the principle of the maximum term and Theorem~\ref{theorem:multi-d-levy-processes}:
\begin{align*}
&
\limsup_{n \rightarrow \infty}\frac{1}{L(n)n^\alpha}\log\P\left(\phi\left(\bar {\mathbf X}_n \right)\in F\right)
\\
&
\le
\limsup_{n \rightarrow \infty}\frac{1}{L(n)n^\alpha}\log \Big\{
\P\left( \bar{\mathbf J}^{k}_n\in \phi^{-1}\big(F^{2\epsilon\|\phi\|_\lip}\big)\right)
+\P\left(\|\bar {\mathbf H}_n^{k}\|_\infty > \epsilon\right)
+\P\left(\|\bar {\mathbf R}_n\|_\infty > \epsilon\right)
\Big\} \nonumber
\\
&
=
\max\left\{
\limsup_{n \rightarrow \infty} \frac{\log  \P\left( \bar{\mathbf J}^{k}_n\in \phi^{-1}\big(F^{2\epsilon\|\phi\|_\lip}\big)\right)}{L(n)n^\alpha},\
\limsup_{n\rightarrow \infty} \frac{ \log \P\left(\|\bar {\mathbf H}_n^{k}\|_\infty > \epsilon\right)}{L(n)n^\alpha}
\right\}
\nonumber
\\
&
\le
\max\Bigg\{-\inf_{(\xi_1,\ldots,\xi_d) \in \phi^{-1}\big(F^{2\epsilon\|\phi\|_\lip}\big)}I_k^d(\xi_1,\ldots,\xi_d),\,\max_{i=1,\ldots,d}\limsup_{n \rightarrow \infty}\frac{\log \P\big(\|\bar{H}^{k\,(i)}_n\|_\infty>\epsilon/d \big)}{L(n)n^\alpha} \Bigg\}\nonumber
\\
&
\le
\max\Bigg\{-\inf_{(\xi_1,\ldots,\xi_d) \in \phi^{-1}\big(F^{2\epsilon\|\phi\|_\lip}\big)}I^d(\xi_1,\ldots,\xi_d),\,\limsup_{n \rightarrow \infty}\frac{ \log \P\big(\|\bar{H}^{k\,(1)}_n\|_\infty>\epsilon/d \big) }{L(n)n^\alpha}\Bigg\}.\nonumber
\end{align*}
 From Lemma~\ref{lemma:negligence-K}, we can take $k\rightarrow \infty$ to get
\begin{equation}\label{ineq:phi-Xbar-almost-upper-bound}
\limsup_{n \rightarrow \infty}\frac{\log\P\left(\phi\left(\bar {\mathbf X}_n \right)\in F\right) }{L(n)n^\alpha}
\leq
-\inf_{(\xi_1,\ldots,\xi_d) \in \phi^{-1}\left(F^{2\epsilon\|\phi\|_\lip}\right)}I^d(\xi_1,\ldots,\xi_d)
=
-\inf_{x\in F^{2\epsilon\|\phi\|_\lip}} I'(x)
\end{equation}		
%Since $\phi$ is Lipschitz continuous, $\phi^{-1}\left(F^{\epsilon\|\phi\|_\lip}\right)$ are closed, and  $\bigcap_{\epsilon > 0}\phi^{-1}\left(F^{\epsilon\|\phi\|_\lip}\right)=\phi^{-1}\left(F\right)$.
 From Lemma 4.1.6 of \cite{dembo2010large},
$\lim_{\epsilon\rightarrow 0}\inf_{x  \in F^{\epsilon\|\phi\|_\lip}}I'(x)=\inf_{x \in F}I'(x)$.
Letting $\epsilon\to 0$ in \eqref{ineq:phi-Xbar-almost-upper-bound}, we arrive at the desired large deviation upper bound.

Turning to the lower bound, consider an open set $G$. Since $\phi^{-1}(G)$ is open, from Theorem~\ref{theorem:multi-d-levy-processes},
\begin{align*}
\liminf_{n \rightarrow \infty}\frac{1}{L(n)n^\alpha}\log
\P\left(\phi(\bar{\mathbf X}_n  )\in G\right)
&=
\liminf_{n \rightarrow \infty}\frac{1}{L(n)n^\alpha}\log
\P\left(\bar{\mathbf X}_n \in \phi^{-1}(G)\right)
\\
&
\ge -\inf_{(\xi_1,\ldots,\xi_d) \in \phi^{-1}\left(G\right)}I(\xi)=-\inf_{x \in G}I'(x).
\end{align*}
\end{proof}

%%%%%%%%%%%%%%%%%%%%%%%%%%%%%%%%%%%%%%%%%%%%%%%%%%%%%%%%%
%
% Subsection Sampla path LDP w.r.t. M_1' topology
%
%%%%%%%%%%%%%%%%%%%%%%%%%%%%%%%%%%%%%%%%%%%%%%%%%%%%%%%%%
\subsection{Sample path LDP w.r.t.\ $M_1'$ topology}\label{subsec:ldp-wrt-M1p}
Recall that
$
\bar X_n(t) \triangleq \frac 1n X(nt) - t \E X(1)
$
and 
$
\bar S_n(t) = \frac 1n \sum_{i=1}^{[nt]} (Z_i - \E Z).
$
In this section, we establish the full LDP for $\bar X_n$ and $\bar S_n$ w.r.t.\ the $M_1'$ topology.
For the definition of the $M_1'$ topology, see Appendix~\ref{section:appendix-for-M_1p}.
%One consequence of Corollary~\ref{theorem:contraction-principle} is the full large deviation principle w.r.t.\ $M_1'$ topology.
%Let $\tilde {\mathbb D}_\infty^\uparrow \triangleq \{\xi\in\mathbb D: \, \xi \text{ is a non-decreasing pure jump function}, \xi(0) \geq 0\}$.
\begin{corollary}\label{corollary:LDP-wrt-M_1-prime}
$\bar X_n$ and $\bar S_n$ satisfy the LDP in $(\mathbb D, \mathcal T_{M_1'})$ with speed $L(n)n^\alpha$ and the good rate function $I_{M_1'}$.
$$
I_{M_1'}(\xi)
\triangleq
\begin{cases}
\sum_{t\in[0,1]}\big(\xi(t) - \xi(t-)\big)^\alpha
&
%\text{if } \xi \in \tilde {\mathbb D}_\infty^\uparrow
\text{if $\xi$ is a non-decreasing pure jump function with } \xi(0) \geq 0,
\\
\infty
&
\text{otherwise}.
\end{cases}.
$$
\end{corollary}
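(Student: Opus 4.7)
My plan is to derive the full $M_1'$-LDP by transferring the extended $J_1$-LDP (Theorems~\ref{theorem:1d-extended-ldp} and~\ref{thm:random-walk}) using two facts about the topologies: (a) $\mathcal{T}_{M_1'}$ is coarser than $\mathcal{T}_{J_1}$ on $\mathbb{D}$, so $J_1$-convergence implies $M_1'$-convergence, and every $M_1'$-open (resp.\ closed) set is $J_1$-open (resp.\ closed); and (b) $I_{M_1'}$ is a good rate function on $(\mathbb{D},\mathcal{T}_{M_1'})$—I will invoke this from the appendix results on the $M_1'$ topology. I will also use the pointwise relations $I_{M_1'}(\xi)\le I(\xi)$ for all $\xi$, with equality whenever $\xi\in\mathbb{D}_\infty^{\uparrow}$ (no jumps at $0$ or $1$). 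The same argument applies to both $\bar X_n$ and $\bar S_n$, since each satisfies the extended $J_1$-LDP with the same rate $I$, so I write it once for $\bar X_n$.

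For the \emph{upper bound}, fix an $M_1'$-closed $F$; by (a) it is $J_1$-closed, so Theorem~\ref{theorem:1d-extended-ldp} yields
\[
\limsup_{n\to\infty}\frac{\log\P(\bar X_n\in F)}{L(n)n^\alpha}\le -\lim_{\epsilon\downarrow 0}\inf_{\xi\in F^\epsilon_{J_1}}I(\xi).
\]
It then suffices to show $\lim_{\epsilon\downarrow 0}\inf_{\xi\in F^\epsilon_{J_1}}I(\xi)\ge \inf_{\xi\in F}I_{M_1'}(\xi)$. Assume, for contradiction, that there are $\epsilon_m\downarrow 0$ and $\xi_m\in F^{\epsilon_m}_{J_1}$ with $I(\xi_m)\le c-\delta$, where $c=\inf_F I_{M_1'}$. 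Since $I_{M_1'}\le I$, the $\xi_m$ lie in the $M_1'$-compact level set $\{I_{M_1'}\le c-\delta\}$, so along a subsequence $\xi_m\to\xi^{*}$ in $M_1'$. Pick $\zeta_m\in F$ with $d_{J_1}(\xi_m,\zeta_m)\le\epsilon_m$; since $J_1$-convergence implies $M_1'$-convergence, $\zeta_m\to\xi^{*}$ in $M_1'$, whence $\xi^{*}\in F$ by $M_1'$-closedness. Lower semicontinuity of $I_{M_1'}$ yields $I_{M_1'}(\xi^{*})\le c-\delta<\inf_F I_{M_1'}$, a contradiction.

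For the \emph{lower bound}, fix an $M_1'$-open $G$ and $\xi\in G$ with $I_{M_1'}(\xi)<\infty$, so $\xi$ is a non-decreasing pure-jump function with $\xi(0)\ge 0$. I construct a $J_1$-approximant $\xi_\eta\in \mathbb{D}_\infty^{\uparrow}$ by shifting any jump of $\xi$ at $0$ to time $\eta$ and any jump at $1$ to time $1-\eta$, leaving the other jumps in place. Then $\xi_\eta\to\xi$ in $M_1'$ as $\eta\downarrow 0$ (this is the standard $M_1'$-continuity of such time-shifts, handled in the appendix), and $I(\xi_\eta)=I_{M_1'}(\xi)$. Since $G$ is $M_1'$-open hence $J_1$-open, for all small enough $\eta$ there is a $J_1$-open ball $B_{J_1}(\xi_\eta,\rho)\subset G$. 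The extended $J_1$-LDP lower bound gives
\[
\liminf_{n\to\infty}\frac{\log\P(\bar X_n\in G)}{L(n)n^\alpha}\ge -\inf_{\zeta\in B_{J_1}(\xi_\eta,\rho)}I(\zeta)\ge -I(\xi_\eta)=-I_{M_1'}(\xi).
\]
Taking the infimum over $\xi\in G$ completes the proof for $\bar X_n$; the argument for $\bar S_n$ is identical, using Theorem~\ref{thm:random-walk} in place of Theorem~\ref{theorem:1d-extended-ldp}.

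The main obstacle is establishing the $M_1'$-goodness of $I_{M_1'}$ and the continuity properties of the shifting map $\xi\mapsto\xi_\eta$ in the $M_1'$-topology, which will rely on the technical lemmas developed in Appendix~\ref{section:appendix-for-M_1p}. Once these are in hand, the transfer of the LDP from $J_1$ to $M_1'$ proceeds by the compactness/approximation dichotomy outlined above.
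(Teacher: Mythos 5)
Your proposal is correct and takes essentially the same route as the paper: transfer the extended $J_1$-LDP via the goodness of $I_{M_1'}$, the pointwise bound $I_{M_1'}\le I$, and (for the lower bound) the construction that pushes boundary jumps slightly into the interior. The only real difference is presentational: you spell out the upper bound via a compactness/contradiction argument, while the paper condenses it into an inequality chain (implicitly invoking Lemma 4.1.6 of Dembo--Zeitouni after comparing the $J_1$- and $M_1'$-fattenings); and in the lower bound the paper proves the equivalent identity $\inf_G I_{M_1'}=\inf_G I$ instead of arguing pointwise. One small point to make explicit in your shift construction is that $\eta$ and $1-\eta$ must be chosen to avoid the interior jump times of $\xi$ (the paper does this), so that $\xi_\eta$ is a genuine pure-jump path with $I(\xi_\eta)=I_{M_1'}(\xi)$.
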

\begin{proof}
Since the proof for $\bar S_n$ is nearly identical, we only provide the proof for $\bar X_n$.
From Proposition~\ref{thm:M_1p-goodness}
we know that $I_{M_1'}$ is a good rate function.
For the LDP upper bound, suppose that $F$ is a closed set w.r.t.\ the $M_1'$ topology.
Then, it is also closed w.r.t.\ the $J_1$ topology.
From the upper bound of Theorem~\ref{theorem:1d-extended-ldp} and the fact that $I_{M_1'}(\xi) \leq I(\xi)$ for any $\xi \in \D$,
$$
\limsup_{n\to\infty} \frac{\log \P(\bar X_n \in F)}{L(n)n^\alpha}
\leq
-\lim_{\epsilon\to 0} \inf_{\xi\in F^\epsilon} I(\xi) 
\leq -\lim_{\epsilon\to 0} \inf_{\xi\in F^\epsilon} I_{M_1'}(\xi) 
= -\inf_{\xi\in F} I_{M_1'}(\xi).
$$
Turning to the lower bound, suppose that $G$ is an open set w.r.t.\ the $M_1'$ topology.
We claim that
$$\inf_{\xi \in G}I_{M_1'}(\xi) = \inf_{\xi \in G}I(\xi).$$
To show this, we only have to show that the RHS is not strictly larger than the LHS.
Suppose that $I_{M_1'}(\xi) < I(\xi)$ for some $\xi \in G$.
Since $I$ and $I_{M_1'}$ differ only if the path has a jump at either 0 or 1, this means that $\xi$ is a non-negative pure jump function of the following form:
$$
\xi = \sum_{i=1}^\infty z_i \I_{[u_i,1]},
$$
where $u_1=0$, $u_2=1$, $u_i$'s are all distinct in $(0,1)$ for $i\geq 3$ and $z_i \geq 0$ for all $i$'s.
Note that one can pick an arbitrarily small $\epsilon$ so that $\sum_{i\in\{n: u_n < \epsilon\}}z_i < \epsilon$, $\sum_{i\in\{n: u_n > 1-\epsilon\}}z_i < \epsilon$, $\epsilon \neq u_i$  for all $i\geq 2$, and $1-\epsilon \neq u_i$ for all $i\geq 2$.
For such $\epsilon$'s, if we set
$$
\xi_\epsilon \triangleq z_1 \I_{[\epsilon, 1]} + z_2 \I_{[1-\epsilon,1]} + \sum_{i=3}^\infty z_i \I_{[u_i,1]}
,
$$
then $d_{M_1'}(\xi, \xi_\epsilon) \leq \epsilon$ while $I(\xi_\epsilon) = I_{M_1'}(\xi)$.
That is, we can find an arbitrarily close element $\xi_\epsilon$ from $\xi$ w.r.t.\ the $M_1'$ metric by pushing the jump times at 0 and 1 slightly to the inside of $(0,1)$; at such an element, $I$ assumes the same value as $I_{M_1'}(\xi)$.
Since $G$ is open w.r.t.\ $M_1'$, one can choose $\epsilon$ small enough so that $\xi_\epsilon \in G$.
This proves the claim.
Now, the desired LDP lower bound is immediate from the LDP lower bound in Theorem~\ref{theorem:1d-extended-ldp} since $G$ is also an open set in the $J_1$ topology.
%This is an immediate result of applying Corollary~\ref{theorem:contraction-principle} to the identity map between $(\mathbb D, \mathcal T_{J_1})$ and $(\mathbb D, \mathcal T_{M_1'})$,
%since the identity map is Lipschitz continuous between the two spaces and the resulting rate function is good (Corollary~\ref{thm:goodness}).
\end{proof}

%%%%%%%%%%%%%%%%%%%%%%%%%%%%%%%%%%%%%%%%%%%%%%%%%%%%%%%%%
%
% Subsection Non-existence of LDP w.r.t. J_1 topology
%
%%%%%%%%%%%%%%%%%%%%%%%%%%%%%%%%%%%%%%%%%%%%%%%%%%%%%%%%%
\subsection{Nonexistence of large deviation principle in the $J_1$ topology}
Consider a compound Poisson process with arrival rate equal to 1  whose jump distribution is Weibull with the shape parameter $1/2$.
More specifically, $\bar X_n (t) \triangleq \frac1n \sum_{i=1}^{N(nt)}Z_i-t$ with $\P(Z_i\geq x) \sim \exp(-x^{\alpha})$, $\E Z_i = 1$, and $\alpha = 1/2$.
If $\bar X_n$ satisfies a full LDP w.r.t.\ the $J_1$ topology, the rate function that controls the LDP (with speed $n^\alpha$) associated with $\bar X_n$ should be of the same form as the one that controls  the extended LDP:
$$
I(\xi) =
\left\{
\begin{array}{ll}
\sum_{t\in[0,1]}\big(\xi(t) - \xi(t-)\big)^{\alpha}
&
\text{if }\xi\in \D_{\leqslant\infty},\\
\infty,
&
\text{otherwise.}
\end{array}
\right.
$$
To show that such a LDP is fundamentally impossible, we construct a closed set $A$ for which
\begin{equation}\label{ineq:the-counter-example}
\limsup_{n\to\infty} \frac{\log\P(\bar X_n\in A)}{n^\alpha} > -\inf_{\xi \in A} I(\xi).
\end{equation}
Let
$$
\varphi_{s,t} (\xi)
\triangleq
\lim_{\epsilon\to 0}
\sup_{0\vee(s-\epsilon) \leq u \leq v \leq 1\wedge (t + \epsilon)} (\xi(v) - \xi(u)).
$$
Let $A_{c;s,t} \triangleq \{\xi: \varphi_{s,t}(\xi) \geq c\}$ be (roughly speaking) the set of paths which increase at least by $c$ between time $s$ and $t$.
Then $A_{c;s,t}$ is a closed set for each $c$, $s$, and $t$.\\

\cf{
To see this, suppose that $\xi \notin A_{c;s,t}$, i.e., $\varphi_{s,t}(\xi) < a$.
Then there exists $\epsilon>0$ such that
$$\sup_{s-\epsilon \leq u \leq v \leq t + \epsilon} \xi(v) - \xi(u) < a - 2\epsilon.$$
If $d(\zeta, \xi) < \epsilon$, there exists $\lambda$ such that
$$\|\xi\circ \lambda - \zeta\| < \epsilon\qquad\text{and}\qquad\|\lambda - e \| < \epsilon.$$
For any $u,v\in [s-\epsilon, t+\epsilon]$
$$\zeta(v) - \zeta(u) \leq \xi\circ \lambda(v) - \xi\circ \lambda(u) + 2\epsilon
\leq \sup_{s-\epsilon\leq u\leq v\leq t+\epsilon} \xi(v)  - \xi(u) + 2\epsilon
< a.
$$
That is, $B(\xi; \epsilon) \subseteq A_{c;s,t}^\stcomp $.
}
Let
$$A_m \triangleq
\Big(A_{1;\frac{m+1}{m+2},\frac{m+1}{m+2}+mh_m}\Big)
\cap
\Big(A_{1;\frac{m}{m+2},\frac{m}{m+2}+mh_m}\Big)
\cap
\Big(\bigcap_{j=0}^{m-1} A_{\frac{1}{m^2};\frac{j}{m+2},\frac{j}{m+2}+mh_m}\Big)$$
where
$h_m= \frac{1}{(m+1)(m+2)}$, and let
$$
A \triangleq \bigcup_{m=1}^\infty A_m.
$$

\cf{That is,
\begin{equation}\label{def:Am}
A_m \triangleq A_{1;\frac{(m+1)(m+1)}{(m+1)(m+2)},\frac{(m+1)(m+1)+m}{(m+1)(m+2)}} \cap A_{1;\frac{(m+1)m}{(m+1)(m+2)},\frac{(m +1)m+m }{(m +1)(m+2)}} \cap \bigcap_{j=0}^{m-1} A_{\frac{1}{m^2};\frac{(m +1)j}{(m +1)(m+2)},\frac{(m +1)j+m}{(m +1)(m+2)}}.
\end{equation}
In words, paths that belong to $A_m$ increases $m$ times by $m^{-2}$, and 2 times by 1, while the two increases of size 1 get closer to each other as $m$ increases.
In this example, the times of the two big increases get closer and closer to time 1 as $m$ increases; but there is nothing special about time 1.
For example, we can make the times of the two big increases get close to 1/2 (instead of 1) as $m$ increases and proceed similarly as follows.
We pick the time 1 here for the sake of notational convenience.
}
To see that $A$ is closed, we first claim that $\zeta \in \mathbb D\setminus A$ implies the existence of $\epsilon>0$ and $N\geq 0$ such that $B(\zeta; \epsilon) \cap A_m = \emptyset$ for all $m\geq N$.
To prove this claim, suppose not.
It is straightforward to check that for each $n$, there has to be $s_n,t_n \in [1-1/n, 1)$ such that $s_n\leq t_n$ and $\zeta(t_n) - \zeta(s_n) \geq 1/2$, which in turn implies that $\zeta$ must possess infinite number of increases of size at least 1/2 in $[1-\delta, 1)$ for any $\delta>0$.
This implies that $\zeta$ cannot possess a left limit, which is contradictory to the assumption that $\zeta \in \mathbb D\setminus A$.
On the other hand, since each $A_m$ is closed, $\bigcup_{i=1}^N A_i$ is also closed, and hence, there exists $\epsilon'>0$ such that $B(\zeta; \epsilon') \cap A_m = \emptyset$ for $m=1,\ldots, N$.
Now, from the construction of $\epsilon$ and $\epsilon'$, $B(\zeta, \epsilon \vee \epsilon') \cap A = \emptyset$, proving that $A$ is closed.

Next, we show that $A$ satisfies (\ref{ineq:the-counter-example}).
First note that if $\xi$ is a pure jump function that belongs to $A_m$, $\xi$ has to possess $m$ upward jumps of size $1/m^2$ and 2 upward jumps of size $1$, and hence,
\begin{equation}\label{bnd:infI}
\inf_{\xi\in A} I(\xi) \geq \inf_m \Big(1^{1/2} + 1^{1/2} + m (1/m^2)^{1/2}\Big) = 3.
\end{equation}
On the other hand, letting $\Delta \xi(t) \triangleq \xi(t) - \xi(t-)$,
\begin{align*}
&
\P(\bar X_{(n+1)(n+2)} \in A_n )
\\
&
\geq
\prod_{j=0}^{n-1} \P\bigg(\sup_{t\in[0,1]} \Big\{\bar X_{(n+1)(n+2)}\Big({\textstyle\frac{(n+1)j + nt}{(n +1)(n+2)}}\Big)-\bar X_{(n +1)(n+2)}\Big({\textstyle\frac{(n +1)j }{(n +1)(n+2)}}\Big)\Big\}\geq \frac1{n^2} \bigg)
\\
&
\qquad
\cdot
\P\bigg(\sup_{t\in(0,1]} \Big\{\Delta\bar X_{(n +1)(n+2)}\Big({\textstyle\frac{(n +1)n + n t}{(n +1)(n+2)}}\Big)\Big\}\geq 1 \bigg)
\cdot
\P\bigg(\sup_{t\in(0,1]} \Big\{\Delta\bar X_{(n+1)(n+2)}\Big({\textstyle\frac{(n+1)(n+1) + n t}{(n +1)(n+2)}}\Big)\Big\}\geq 1 \bigg)
\\
&
=
\P\bigg(\sup_{t\in[0,1]} \Big\{\bar X_{(n +1)(n+2)}\Big({\textstyle\frac{n t}{(n +1)(n+2)}}\Big)\Big\}\geq \frac1{n^2} \bigg)^n
\cdot
\P\bigg(\sup_{t\in[0,1]} \Big\{\Delta\bar X_{(n +1)(n+2)}\Big({\textstyle\frac{n t}{(n +1)(n+2)}}\Big)\Big\}\geq 1 \bigg)^2
\\
&
=
\P\bigg(\sup_{t\in[0,1]} \Big\{X(nt)\Big\}\geq \frac{(n+1)(n+2)}{n^2} \bigg)^n
\cdot
\P\bigg(\sup_{t\in[0,1]} \Big\{\Delta X(nt)\Big\}\geq (n+1)(n+2) \bigg)^2
\\
&
\geq
\P\bigg(\sup_{t\in[0,1]} \Big\{X(nt)\Big\}\geq 6 \bigg)^n
\cdot
\P\bigg(\sup_{t\in[0,1]} \Big\{\Delta X(nt)\Big\}\geq (n+1)(n+2) \bigg)^2,
\end{align*}
and hence,
\begin{equation}\label{bnd:I-II}
\begin{aligned}
&
\limsup_{n\to\infty}\frac{\log \P(\bar X_{n} \in A )}{n^\alpha}
\geq
\limsup_{n\to\infty}\frac{\log \P(\bar X_{(n+1)(n+2)} \in A_n )}{\big((n +1)(n+2)\big)^\alpha}
\\
&
\geq
\limsup_{n\to\infty}\frac{\log\P\bigg(\sup_{t\in[0,1]} \Big\{X(n t)\Big\}\geq 6 \bigg)^n}{\big((n +1)(n+2)\big)^\alpha}  +2\limsup_{n\to\infty}\frac{ \log \P\bigg(\sup_{t\in[0,1]} \Big\{\Delta X(n t)\Big\}\geq (n+1)(n+2) \bigg)}{\big((n +1)(n+2)\big)^\alpha}
\\
&
=
\text{(I)} + \text{(II)}.
\end{aligned}
\end{equation}
Letting $p_n \triangleq \P\bigg(\sup_{t\in[0,n]} \Big\{X(t)\Big\}< 6 \bigg)$,
\begin{align}\label{bnd:I}
\text{(I)}
&
=
\limsup_{n\to\infty}\frac{\log(1-p_n)^n }{\big((n +1)(n+2)\big)^\alpha}
=
\limsup_{n\to\infty}\frac{np_n\log(1-p_n)^{1/p_n}}{\big((n +1)(n+2)\big)^\alpha}
=
\limsup_{n\to\infty}\frac{-np_n}{\big((n +1)(n+2)\big)^\alpha}
=
0
\end{align}
since $p_n\to0$ as $n\to\infty$.\\

%%%%%%%%%%%%%%%%%%%%%%%%%%%%%%%%%%%%%%%%%%%%%%%%%%%%%%%%%%%%%%%%%%%%
\cf{
This is probably a standard result, but I include the proof for
\begin{equation*}\label{eq:perhaps-standard}
p_n = \P\bigg(\sup_{t\in[0,1]} \Big\{X(nt)\Big\}\leq 6 \bigg) = O(n^{-1/2})
\end{equation*}
here just to make sure.

\begin{align*}
\P\bigg( \sup_{t\in [0,1]} X(nt) \leq 6\bigg)
&
=
\P\bigg(\sup_{t\in[0,1]} \sum_{i=1}^{N(nt)} Z_i - n t \leq 6\bigg)
=
\P\bigg(\sup_{t\in[0,n]} \sum_{i=1}^{N(t)} Z_i - t \leq 6\bigg)
\\
&
=
\P\bigg(\max_{j=0,1,\ldots,N(n)} \sum_{i=1}^{j} (Z_i - \tau_i) \leq 6\bigg)
\end{align*}
where $\tau_i$'s are the holding times---i.e.,
$\tau_i \triangleq \gamma_i - \gamma_{i-1}$, $\gamma_i \triangleq \inf\{t\geq \gamma_{i-1}: N(t) > N(\gamma_{i-1})\}$, and $\gamma_0 = 0$.
Therefore,
\begin{align*}
&\P\bigg( \sup_{t\in [0,1]} X(nt) \leq 6\bigg)
\\
&
\leq
\P\bigg(\max_{j=0,1,\ldots,N(n)} \sum_{i=1}^{j} (Z_i - \tau_i) \leq 6,\ N(n) \leq (1-\epsilon)n\bigg)
+
\P\bigg(\max_{j=0,1,\ldots,N(n)} \sum_{i=1}^{j} (Z_i - \tau_i) \leq 6,\ N(n) \geq (1-\epsilon)n\bigg)
\\
&
\leq
\P\bigg(N(n)/n \leq 1-\epsilon\bigg)
+
\P\bigg(\max_{j=0,1,\ldots,\lfloor(1-\epsilon)n\rfloor} \sum_{i=1}^{j} (Z_i - \tau_i) \leq 6\bigg)
= O(n^{-1/2})
\end{align*}
since $\P\bigg(N(n)/n \leq 1-\epsilon\bigg)$ decays at a geometric rate and
$$\P\bigg(\max_{j=0,1,\ldots,\lfloor(1-\epsilon)n\rfloor} \sum_{i=1}^{j} (Z_i - \tau_i) \leq 6\bigg) < c/n$$
for some $c>0$; see, for example, Theorem 5.1.7 of \cite{lawler2010random}.
}%%%%%%%%%%%%%%%%%%%%%%%%%%%%%%%%%%%%%%%%%%%%%%%%%%%%%%%%%%%%%%%%%%%%

{
For term \text{(II)}, notice that that $\left\{Z_1 \geq (n+1)(n+2)\right\} \implies  \left\{ \sup_{t\in[0,1]} \Big\{\Delta X(nt)\Big\}\geq (n+1)(n+2)\right\}$. That is,

\begin{align}\label{eq:ver2ce}
\text{(II)}
&=
2\limsup_{n \to \infty}\frac{\log \P\bigg(\sup_{t\in[0,1]} \Big\{\Delta X(nt)\Big\}\geq (n+1)(n+2) \bigg)}{\big((n+1)(n+2)\big)^\alpha} \nonumber
\\
&
\geq
2\limsup_{n\to\infty}\frac{\log \P\bigg(Z_1\geq (n+1)(n+2) \bigg)}{\big((n+1)(n+2)\big)^\alpha}=2\limsup_{n\to\infty}\frac{\log e^{-\big((n+1)(n+2)\big)^\alpha}}{\big((n+1)(n+2)\big)^\alpha}
\nonumber
\\
&
= -2.
\end{align}
}
%\begin{mdframed}[style=MyFrame]
%For the second term, from the generic inequality $1-e^{-x} \geq x(1-x)$,
%\begin{align}\label{bnd:II}
%\text{(II)}
%&=
%2\limsup_{n\to\infty}\frac{\log \P\bigg(\sup_{t\in[0,1]} \Big\{\Delta X(nt)\Big\}\geq (n+1)(n+2) \bigg)}{\big((n+1)(n+2)\big)^\alpha}
%\nonumber
%\\
%&
%=
%2\limsup_{n\to\infty}\frac{\log \P\bigg(Q^\gets_n(\Gamma_1)\geq (n+1)(n+2) \bigg)}{\big((n+1)(n+2)\big)^\alpha}
%=
%2\limsup_{n\to\infty}\frac{\log \P\bigg(\Gamma_1\le Q_n((n+1)(n+2)) \bigg)}{\big((n+1)(n+2)\big)^\alpha}
%\nonumber
%\\
%&
%=
%2\limsup_{n\to\infty}\frac{\log \P\bigg(\Gamma_1\leq n\exp(-((n+1)(n+2))^{\alpha}) \bigg)}{\big((n+1)(n+2)\big)^\alpha}
%\nonumber
%\\
%&
%=
%2\limsup_{n\to\infty}\frac{\log \Big\{1-\exp\Big(-n\exp\big(-((n+1)(n+2))^{\alpha}\big)\Big)\Big\}}{\big((n+1)(n+2)\big)^\alpha}
%\nonumber
%\\
%&
%\geq
%2\limsup_{n\to\infty}\frac{\log \Big\{\Big(n\exp\big(-((n+1)(n+2))^{\alpha}\big)\Big)\Big[1-\Big(n\exp\big(-((n+1)(n+2))^{\alpha}\big)\Big)\Big]\Big\}}{\big((n+1)(n+2)\big)^\alpha}
%\nonumber
%\\
%&
%= -2.
%\end{align}
%\end{mdframed}

From (\ref{bnd:I-II}), (\ref{bnd:I}), (\ref{eq:ver2ce}),
\begin{equation}\label{bnd:limsup}
\limsup_{n\to\infty} \frac{\log \P(\bar X_{n}\in A)}{n^\alpha} \geq -2.
\end{equation}
This along with (\ref{bnd:infI}),
$$
\limsup_{n\to\infty} \frac{\log \P(\bar X_{n}\in A)}{n^\alpha} \geq -2
>
-3 \geq -\inf_{\xi \in A} I(\xi),
$$
which means that $A$ indeed is a counterexample for the desired LDP.
\cf{
In case of general $\alpha$, considering
$$
A_m
\triangleq
A_{1;\frac{(\lfloor m^\beta\rfloor+1)(m+1)}{(\lfloor m^\beta\rfloor+1)(m+2)},\frac{(\lfloor m^\beta\rfloor+1)(m+1)+\lfloor m^\beta\rfloor}{(\lfloor m^\beta\rfloor+1)(m+2)}}
\cap
A_{1;\frac{(\lfloor m^\beta\rfloor+1)m}{(\lfloor m^\beta\rfloor+1)(m+2)},\frac{(\lfloor m^\beta\rfloor +1)m+\lfloor m^\beta\rfloor }{(\lfloor m^\beta\rfloor +1)(m+2)}}
\cap
\bigcap_{j=0}^{m-1}
A_{\frac{1}{m^2};\frac{(\lfloor m^\beta\rfloor +1)j}{(\lfloor m^\beta\rfloor +1)(m+2)},\frac{(\lfloor m^\beta\rfloor +1)j+\lfloor m^\beta\rfloor}{(\lfloor m^\beta\rfloor +1)(m+2)}}.
$$
for $\beta = 1/\alpha -1$ instead of (\ref{def:Am}) gives the counter example.
}

Note that a simpler counterexample can be constructed using the peculiarity of the $J_1$ topology at the boundary of the domain; that is, jumps (of size 1, say) at time 0 are bounded away from the jumps at arbitrarily close jump times.
For example, if $\bar Y_n(t) \triangleq \frac1n\sum_{i=1}^{N(nt)} Z_i + t$ where the right tail of $Z$ is Weibull and $\E Z = -1$, then the set $B = \{x: x(t) \geq t/2 \text{ for all }t \in [0,1]\}$ gives a counterexample for the LDP.
Note that the $M_1'$ topology we used in Section~\ref{subsec:ldp-wrt-M1p} is a treatment for the same peculiarity of the $M_1$ topology at time $0$.
However, it should be clear from the above counterexample $\bar X_n$ and $A$ that the LDP is fundamentally impossible w.r.t.\ $J_1$-like topologies---i.e., the ones that do not allow merging two or more jumps to approximate a single jump at any time---and hence, there is no hope for the full LDP in the case of the $J_1$ topology.

%%%%%%%%%%%%%%%%%%%%%%%%%%%%%%%%%%%%%%%%%%%%%%%%%%%%%%%%%%%%%%%%%%%%%%%%%%%%%%%%%%%%%%%%%%
%
%
% Section Examples
%
%
%%%%%%%%%%%%%%%%%%%%%%%%%%%%%%%%%%%%%%%%%%%%%%%%%%%%%%%%%%%%%%%%%%%%%%%%%%%%%%%%%%%%%%%%%%

\section{Boundary crossing with moderate jumps}
In this section, we illustrate the value of Corollary~\ref{theorem:contraction-principle}; for an application of Corollary~\ref{corollary:LDP-wrt-M_1-prime} we refer to \cite{BBRZ2}.
We consider level crossing probabilities of L\'evy processes where the jump sizes are conditioned to be moderate. Specifically, we apply %Theorem~\ref{theorem:1d-extended-ldp} and
Corollary~\ref{theorem:contraction-principle} in order to provide large-deviations estimates for
\begin{equation}
\label{moderate-prob}
\P\left(\sup_{t\in [0,1]}\bar{X}_n(t)\geq c, \sup_{t \in [0,1]}\big|\bar{X}_n(t)-\bar{X}_n(t-)\big|\leq b\right).
\end{equation}
We emphasize that this type of rare events are difficult to analyze with the tools developed previously.
In particular, the sample path LDP proved in \cite{Gantert1998} is w.r.t.\ the $L_1$ topology.
Since the closure of the sets in (\ref{moderate-prob}) w.r.t.\ the $L_1$ topology contains the zero function, the LDP upper bound would not provide any information.
%On the other hand, we will see that our extended LDP w.r.t.\ the $J_1$ topology can successfully provide a useful asymptotics.

Functionals like (\ref{moderate-prob}) appear in actuarial models, in case excessively large insurance claims are reinsured and therefore do not play a role in the ruin of an insurance company.
\cite{AsmussenPihlsgaard}, for example, studied various estimates of infinite-time ruin probabilities with analytic methods.
\cite{rheeblanchetzwart2016} studied the finite-time ruin probability, using probabilistic techniques in case of regularly varying L\'evy measures and confirmed that the conventional wisdom ``the principle of a single big jump'' can be extended to ``the principle of the minimal number of big jumps'' in such a context.
Here we show that a similar result---with subtle differences---can be obtained in case the L\'evy measure has a Weibull tail.
% for this type of probabilities.
%It should be noted that this is not a confirmation of the principle of minimal big jumps in Weibull context.
%In the next section, we will see that there are rare events for which a larger number of big jumps can lead to a more likely scenario even when the rare event can be driven by a smaller number of big jumps,
%and hence, for which the intuition from the conventional wisdom can potentially lead to incorrect conclusions.

Define the function $\phi:\mathbbm{D} \to \R^2$ as
\begin{equation}
\phi(\xi)
=
\left(\phi_{1}(\xi), \phi_{2}(\xi)\right) \triangleq \left(\sup_{t \in [0,1]}\xi(t), \sup_{t \in [0,1]} |\xi(t)-\xi(t-)|\right)\nonumber.
\end{equation}
In order to apply Corollary~\ref{theorem:contraction-principle}, we will validate  that $\phi$ is Lipschitz continuous and that $I'(x,y) \triangleq \inf_{\{\xi\in  \D: \phi(\xi) = (x,y)\}}I(\xi)$ is a good rate function.

For the Lipschitz continuity of $\phi$,
we claim that each component of $\phi$ is Lipschitz continuous.
We first examine $\phi_1$.
Let $\xi, \zeta\in \D$ and suppose w.l.o.g.\ that $\sup_{t \in [0,1]}\xi(t) > \sup_{t \in [0,1]}\zeta(t) $.
For an arbitrary non-decreasing homeomorphism $\lambda:[0,1]\to[0,1]$,
\begin{align}\label{eq:phi1-is-Lipschitz}
|\phi_1(\xi)-\phi_1(\zeta)|
&
= |\sup_{t \in [0,1]}\xi(t)-\sup_{t \in [0,1]}\zeta(t)|
=
|\sup_{t \in [0,1]}\xi(t)-\sup_{t \in [0,1]}\zeta\circ\lambda(t)|
\\
&
\leq \sup_{t \in [0,1]}|\xi(t)-\zeta\circ\lambda(t)|
\leq
\sup_{t \in [0,1]}|\xi(t)-\zeta\circ\lambda(t)|\vee|\lambda(t)-t|.\nonumber
\end{align}
Taking the infimum over $\lambda$, we conclude that
$$
|\phi_1(\xi)-\phi_1(\zeta)| \leq \inf_{\lambda\in \Lambda}\sup_{t \in [0,1]}\{|\xi(t)-\zeta(\lambda(t))|\vee|\lambda(t)-t|\}=
d_{J_1}(\xi,\zeta).
$$
Therefore, $\phi_1$ is Lipschitz with the Lipschitz constant 1.

Now, in order to prove that $\phi_{2}(\xi)$ is Lipschitz,
fix two distinct paths $\xi, \zeta \in \D$ and assume w.l.o.g.\ that $\phi_2(\zeta) > \phi_2(\xi)$.
Let $c\triangleq \phi_2(\zeta) - \phi_2(\xi) > 0$, and let $t^*\in[0,1]$ be the maximum jump time of $\xi$, i.e., $\phi_{2}(\xi)=|\xi(t^*)-\xi(t^*-)|$.
For any $\epsilon>0$ there exists $\lambda^*$ so that
\begin{align}\label{eq:ineq-1}
d_{J_1}(\xi,\zeta)
&
\triangleq
\inf_{\lambda \in \Lambda}\{\|\xi-\zeta\circ \lambda\|_{\infty} \vee \|\lambda-e\|_{\infty}\}
\ge
\|\xi-\zeta\circ \lambda^{*}\|_{\infty} \vee \|\lambda^{*}-e\|_{\infty}-\epsilon.
\nonumber
\\
&
\geq
|\xi(t^*)-\zeta\circ\lambda^{*}(t^*)|\vee|\xi(t^*-)-\zeta\circ\lambda^{*}(t^*-)|-\epsilon.
\end{align}
%Consider the case $\zeta\circ\lambda_n^*(t^*)=\zeta\circ\lambda_n^{*}(t_{1^-})=d$.
From the general inequality $|a-b| \vee |c-d| \geq \frac12(|a-c|-|b-d|)$,
\begin{align}\label{eq:maximum}
|\xi(t^*)-\zeta\circ\lambda^{*}(t^*)|\vee|\xi(t_{1^-})-\zeta\circ\lambda^{*}(t^*-)|
&
\ge
\frac 1 2\big(|\xi(t^*)-\xi(t^*-)|-|\zeta\circ\lambda^*(t^*) - \zeta\circ\lambda^*(t^*-)|\big)
\nonumber
\\
&
=
\frac 1 2\big(\phi_2(\xi)-\phi_2(\zeta)\big)
=
c/2.
\end{align}
In view of \eqref{eq:ineq-1} and \eqref{eq:maximum},
$
d_{J_1}(\xi, \zeta) \geq \frac12 |\phi(\xi) - \phi(\zeta)|-\epsilon.
$
Since $\epsilon$ is arbitrary, we get the desired Lipschitz bound with Lipschitz constant 2.
Therefore,
$
|\phi(\xi)-\phi(\zeta)| = |\phi_1(\xi)-\phi_1(\zeta)|\vee|\phi_2(\xi)-\phi_2(\zeta)|\leq 2d_{J_1}(\xi,\zeta)
$
and hence, $\phi$ is Lipschitz with Lipschitz constant 2.

Now, we claim that $I'$ is of the following form:
\begin{equation}\label{eq:rate-function-RW-moderate-jumps}
I'(c,b)=
\begin{cases}
\left\lfloor \frac c b \right\rfloor b^\alpha +\left(c-\left\lfloor \frac c b \right\rfloor b\right)^\alpha & \text{if} \ 0<b \leq c, \\
0
& \text{if} \ b=c=0,\\
\infty
& \text{otherwise.}
\end{cases}
\end{equation}
Note first that \eqref{eq:rate-function-RW-moderate-jumps} is obvious except for the first case, and hence, we will assume that $0<b\leq c$ from now on.
Note also that $I'(c,b)=\inf\{I(\xi): \xi\in \mathbbm{D}_{\leqslant\infty}, \ \phi(\xi)=(c,b)\}$ since $I(\xi)=\infty$ if $\xi\notin \D_{\leqslant\infty}$.
Set $\mathcal{C}\triangleq \{\xi \in \mathbbm{D}_{\leqslant\infty}, (c,b)=\phi(\xi)\}$ and remember that any $\xi\in \D_{\leqslant\infty}$ admits the following representation:
\begin{equation}\label{eq:representation-non-decrea-pure-jump-function}
\xi=\sum_{i=1}^{\infty}x_i\mathbbm{1}_{[u_i,1]},
\end{equation}
where $u_i$'s are distinct in $(0,1)$ and $x_i$'s are non-negative and sorted in a decreasing order.
%Due to the symmetrical form of the rate function $I$, for any permutation $\sigma:\mathbb{N}\to\mathbb{N}$ so that $\xi^{\sigma}\triangleq \sum_{i=1}^{\infty}x_{\sigma(i)}\mathbbm{1}_{[u_i,1]}$, $I(\xi^{\sigma})=I(\xi)$. For the previous reason we can assume that the jumps of every $\xi \in \mathcal{C}$ are written in a decreasing order.
Consider a step function $\xi_0 \in \mathcal{C}$, with $\left\lfloor \frac c b \right\rfloor$ jumps of size $b$ and one jump of size $c-\left\lfloor \frac c b \right\rfloor b$, so that $\xi_0=\sum_{i=1}^{\left\lfloor \frac c b \right\rfloor} b \mathbbm{1}_{[u_i,1]}+(c-\left\lfloor \frac c b \right\rfloor) \mathbbm{1}_{[u_{ \left\lfloor \frac c b \right\rfloor +1},1]}.$ Clearly, $\phi(\xi_0)=(c,b)$ and  $I(\xi_0)=\left\lfloor \frac c b \right\rfloor b^\alpha +\left(c-\left\lfloor \frac c b \right\rfloor b\right)^\alpha$. Since  $\xi_0 \in \mathcal{C}$, the infimum of $I$ over $\mathcal{C}$ should be at most $I(\xi_0) $ i.e., $I(\xi_0) \geq  I'(c,b)$.

To prove that $\xi_0$ is the minimizer of $I$ over $\mathcal C$,
we will show that $I(\xi) \geq I(\xi_0)$ for any $\xi = \sum_{i=1}^\infty x_i\I_{[u_i,1]} \in \mathcal C$  by constructing $\xi'$ such that $I(\xi) \geq I(\xi')$ while $I(\xi') = I(\xi_0)$.
There has to be an integer $k$ such that $x_k'\triangleq \sum_{i=k}^\infty x_i \leq b$.
Let $\xi^1 \triangleq \sum_{i=1}^k x_i^1 \I_{[u_i,1]}$ where $x_i^1$ is the $i$\textsuperscript{th} largest element of $\{x_1,\ldots,x_{k-1},x_k'\}$.
Then, $\xi^1 \in \mathcal C$ and $I(\xi^1) \leq I(\xi)$
due to the sub-additivity of $x\mapsto x^\alpha$.
Now, given $\xi^j = \sum_{i=1}^k x_i^j\I_{[u_i,1]}$, we construct $\xi^{j+1}$ as follows.
Find the first $l$ such that $x_l^j < b$.
If $x_l^j = 0$ or $x_{l+1}^j = 0$, set $\xi^{j+1} \triangleq \xi^j$.
Otherwise, find the first $m$ such that $x_{m+1}^j = 0$ and merge the $l$\textsuperscript{th} jump and the $m$\textsuperscript{th} jump.
More specifically, set
$x_{l}^{j+1} \triangleq x_l^j + x_m^j \wedge (b-x_l^j)$,
$x_{m}^{j+1} \triangleq x_m^j - x_m^j \wedge (b-x_l^j)$,
$x_i^{j+1} \triangleq x_i^j$ for $i\neq l,m$, and
$\xi^{j+1} \triangleq \sum_{i=1}^k x_i^{j+1}\I_{[u_i,1]}$.
Note that $x_{l}^{j+1} + x_{m}^{j+1} = x_{l}^{j} + x_{m}^{j}$ while either $x_l^{j+1} = b$ or $x_m^{j+1} = 0$.
That is, compared to $\xi^j$, $\xi^{j+1}$ has either one less jump or one more jump with size $b$, while the total sum of the jump sizes and the maximum jump size remain the same.
From this observation and the concavity of $x\mapsto x^\alpha$, it is straightforward to check that $I(\xi^{j+1}) \leq I(\xi^j)$.
By iterating this procedure $k$ times, we arrive at $\xi' \triangleq \xi^k$ such that all the jump sizes of $\xi'$ are $b$, or there is only one jump whose size is not $b$.
From this, we see that $\xi^k$ has to coincide with $\xi_0$.
We conclude that
$I(\xi) \geq I(\xi^1)\geq \cdots\geq I(\xi^k) = I(\xi') = I(\xi_0)$, proving that $\xi_0$ is indeed a minimizer.

Now we check that $\Psi_{I'}(\gamma)\triangleq\{(c,b):I'(c,b) \le \gamma\}$ is compact for each $\gamma \in [0, \infty)$ so that $I'$ is a good rate function.
It is clear that $\Psi_{I'}(\gamma)$ is bounded.
To see that $\Psi_{I'}(\gamma)$ is closed,
suppose that $(c_1,b_1) \notin \Psi_{I'}(\gamma)$.
%In cases $c_1 < b_1$, $b_1 = 0$, $b_1=c_1$, it is straightforward to find an open ball around $(c_1,b_1)$ such a way that it doesn't intersect with $\Psi_{I'}(\gamma)$.
In case $0 < b_1 < c_1$, note that $I'$ can be written as
$I'(c,b) = b^\alpha\Big\{\big(c/b - \lfloor c/b \rfloor \big)^\alpha + \lfloor c/b \rfloor \Big\}$,
from which it is easy to see that $I'$ is continuous at such $(c_1,b_1)$'s.
Therefore, one can find an open ball around $(c_1,b_1)$ in such a way that it doesn't intersect with $\Psi_{I'}(\gamma)$.
By considering the cases $c_1 < b_1$, $b_1 = 0$, $b_1=c_1$ separately, the rest of the cases are straightforward to check.
%Therefore, $B((c_1,b_1);\epsilon) \subseteq \Psi_{I'}(\gamma)^\stcomp$.
We thus conclude that $I'$ is a good rate function.
Now we can apply Corollary~\ref{theorem:contraction-principle}.
Note that
$$\inf_{(x,y)\in [c,\infty)\times[0,b]} I'(x,y) = \inf_{(x,y)\in (c,\infty)\times[0,b)} I'(x,y) =
I'(c,b).$$
{That is the large deviation lower and upper bound coincide
and hence,}
\begin{align*}
&{\lim_{n\to\infty} \frac{\log\P\left(\sup_{t\in [0,1]}\bar{X}_n(t)\geq c, \sup_{t \in [0,1]}\big|\bar{X}_n(t)-\bar{X}_n(t-)\big|\leq b\right)}{L(n)n^\alpha}}
\\
&
\qquad
=
\begin{cases}
\left\lfloor \frac c b \right\rfloor b^\alpha +\left(c-\left\lfloor \frac c b \right\rfloor b\right)^\alpha & \text{if} \ 0<b \leq c, \\
0
& \text{if} \ b=c=0,\\
\infty
& \text{otherwise.}
\end{cases}
\end{align*}
From the expression of the rate function, it can be inferred that the most likely way level $c$ is reached is due to $\left\lfloor \frac c b \right\rfloor$ jumps of size $b$ and one jump of size
$\left(c-\left\lfloor \frac c b \right\rfloor b\right)$.
If we compare this with the insights obtained from the case of truncated regularly varying tails in \cite{rheeblanchetzwart2016}, we see that the total number of jumps is identical, but the size of the jumps are deterministic and non-identical, while in the regularly varying case, they are identically distributed with Pareto distribution.

\section{Proofs}\label{section:proofs}
This section provides technical proofs.

\subsection{Lower semi-continuity of $I$ and $I^d$}
Recall the definition of $I$ in \eqref{eq:the-rate-function-1d} and $I^d$ in \eqref{eq:rate-function-Id}.
\begin{lemma}\label{theorem:lower-semi-continuity}
$I$ and $I^d$ are lower semi-continuous, and hence, rate functions.
\end{lemma}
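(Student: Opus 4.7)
The plan is to reduce lower semi-continuity of $I$ to a partition-based bound together with some preservation properties of the $J_1$ topology; the argument for $I^d$ then follows componentwise, since $I^d(\xi_1,\dots,\xi_d) = \sum_j I(\xi_j)$ and $J_1$-convergence in the product topology is componentwise. Suppose $\xi_n \to \xi$ in $(\D,\mathcal T_{J_1})$. If $\liminf_n I(\xi_n) = +\infty$ there is nothing to show, so, passing to a subsequence, I assume $I(\xi_n) \leq M$ for all $n$, which forces $\xi_n \in \D_\infty^\uparrow$.

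First I verify $\xi \in \D_\infty^\uparrow$. Non-decreasingness is preserved because $J_1$-convergence implies pointwise convergence at continuity points of $\xi$, which are dense, and $\xi$ is cadlag. The identity $\xi(0) = 0$ follows because $J_1$-reparametrizations fix $0$, so $\xi_n(0) \to \xi(0)$. Continuity at $1$ follows similarly from the fact that $J_1$-reparametrizations $\lambda_n$ fix $1$: evaluating the uniform bound $\|\xi_n\circ\lambda_n - \xi\|_\infty\to 0$ at $t=1$ gives $\xi_n(1)\to\xi(1)$, while letting $t\to 1^-$ along continuity points of $\xi$ and using $\lambda_n(t)\to\lambda_n(1)=1$ gives $\xi_n(1-)\to \xi(1-)$; hence $\Delta\xi_n(1)\to\Delta\xi(1)$, and since $\xi_n$ is continuous at $1$ so is $\xi$. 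Finally, to rule out a continuous part in $\xi$, assume $\xi$ has a continuous increase of total size $c>0$ on some $[a,b]$. By the intermediate value theorem applied to the continuous component of $\xi$, I can choose a partition $a=s_0<\dots<s_m=b$ of continuity points of $\xi$ with $\xi(s_i)-\xi(s_{i-1})\geq c/m$ for each $i$, giving a partition sum of order $c^\alpha m^{1-\alpha}$; the key bound below together with $\xi_n(s_i)\to\xi(s_i)$ then forces $I(\xi_n)\to\infty$, contradicting $I(\xi_n)\leq M$.

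The key bound is that for any non-decreasing pure jump function $\eta$ and any partition $0=s_0<\dots<s_m=1$,
\begin{equation*}
\sum_{i=1}^m \bigl(\eta(s_i)-\eta(s_{i-1})\bigr)^\alpha \;\leq\; \sum_{t}\Delta\eta(t)^\alpha \;=\; I(\eta),
\end{equation*}
since each increment decomposes into a sum of jumps and $(a_1+\cdots+a_k)^\alpha\leq a_1^\alpha+\cdots+a_k^\alpha$ for $\alpha\in(0,1)$. For the main LSC inequality, given $\xi\in\D_\infty^\uparrow$ and $\epsilon>0$, I pick finitely many jump times $0<t_1<\cdots<t_k<1$ of $\xi$ with $\sum_j\Delta\xi(t_j)^\alpha\geq I(\xi)-\epsilon$, and a partition $\{s_i\}$ of continuity points of $\xi$ isolating each $t_j$ in its own interval. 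By the bound above applied to each $\xi_n$, and by $\xi_n(s_i)\to\xi(s_i)$ at continuity points of $\xi$,
\begin{equation*}
\liminf_n I(\xi_n)\;\geq\;\liminf_n\sum_i\bigl(\xi_n(s_i)-\xi_n(s_{i-1})\bigr)^\alpha\;=\;\sum_i\bigl(\xi(s_i)-\xi(s_{i-1})\bigr)^\alpha\;\geq\;\sum_{j=1}^k\Delta\xi(t_j)^\alpha\;\geq\; I(\xi)-\epsilon,
\end{equation*}
and letting $\epsilon\to 0$ yields $\liminf_n I(\xi_n)\geq I(\xi)$. The most delicate step is the preservation of continuity at the boundary $t=1$ (and $t=0$), which relies crucially on the fact that $J_1$-reparametrizations fix the endpoints; the other steps are standard manipulations with subadditivity and pointwise convergence at continuity points.
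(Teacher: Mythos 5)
Your proof is correct, and it takes a genuinely different formulation from the paper's, though the substantive content is parallel. The paper works with the closed-sublevel-set characterization of lower semi-continuity: for fixed $\xi \notin \Psi_I(\gamma)$ it performs a case analysis on which defining condition of $\D_\infty^\uparrow$ (resp.\ the rate bound) fails, and in each case exhibits an explicit $\delta$ such that the $J_1$ ball $B(\xi,\delta)$ misses $\Psi_I(\gamma)$; the rate-bound case is handled by comparing jump sizes at the reparametrized jump times, using $\Delta(\zeta\circ\lambda)(u_i)\geq \Delta\xi(u_i)-2\delta$. You instead use the sequential $\liminf$ characterization, and your central device is the partition bound $\sum_i(\eta(s_i)-\eta(s_{i-1}))^\alpha\leq I(\eta)$ from subadditivity of $x\mapsto x^\alpha$. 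The advantage of your formulation is that the only information extracted from $J_1$-convergence is pointwise convergence at continuity points of $\xi$ plus at the endpoints, rather than an explicit reparametrization; this also makes the step ruling out a continuous part cleaner, since you partition by the intermediate value theorem applied to the continuous component $\xi_c$ (the paper's phrasing ``an interval on which $\xi$ is continuous'' technically requires justification when jump times are dense, a gap your version sidesteps). Two small points you should tighten: the argument that $\xi$ is continuous at $1$ is best phrased as noting that $J_1$-reparametrizations fix the endpoint and therefore preserve the jump size $\Delta\zeta(1)$, so $|\Delta\xi(1)|\leq 2\limsup_n d_{J_1}(\xi_n,\xi)+\limsup_n|\Delta\xi_n(1)|=0$; and in the IVT step you should allow a small perturbation of the level-crossing times so that the $s_i$ land on continuity points of $\xi$ while keeping $\xi_c(s_i)-\xi_c(s_{i-1})\geq c/(2m)$, say. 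Neither affects the validity of the argument.
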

\begin{proof}
We start with $I$.
To show that the sub-level sets $\Psi_I(\gamma)$ are closed for each $\gamma<\infty$,
let $\xi$ be any given path that does not belong to $\Psi_I(\gamma)$.
We will show that there exists an $\epsilon>0$ such that $d_{J_1}(\xi,\Psi_I(\gamma)) \geq \epsilon$.
Note that $\Psi_I(\gamma)^\stcomp=(A \cap B\cap C\cap  D)^\stcomp = (A^\stcomp) \cup (A\cap B^c) \cup (A \cap B \cap C^\stcomp)\cup (A \cap B\cap C\cap D^\stcomp)$ where
\begin{align*}
&
A=\{\xi \in \mathbbm{D}: \xi(0) = 0 \text{ and } \xi(1) = \xi(1-)\},
&&
B=\{\xi \in \mathbbm{D}: \xi \text{ is non-decreasing}\},
\\
&
C=\{\xi \in \mathbbm{D}: \xi \text{ is a pure jump function}\},
&&
D=\{\xi \in \mathbbm{D}: \textstyle\sum_{t\in [0,1]}(\xi(t)-\xi(t-))^\alpha\leq \gamma\}.
\end{align*}
For $\xi \in A^\stcomp$, we will show that $d_{J_1}(\xi,\Psi_I(\gamma))\geq\delta$ where $\delta = \frac12\max\{|\xi(0)|, |\xi(1) - \xi(1-)|\}$.
Suppose not so that there exists $\zeta \in \Psi_I(\gamma)$ such that $d_{J_1}(\xi,\zeta) < \delta$.
Then $|\zeta(0)|  >|\xi(0)|-2\delta$ {and} $|\zeta(1)-\zeta(1-)| > |\xi(1)-\xi(1-)|-2\delta$.
That is, $\max\{|\zeta(0)|, |\zeta(1) - \zeta(1-)| \} > \max \{|\xi(0)|-2\delta, |\xi(1)-\xi(1-)|-2\delta\}= 0$.
Therefore, $\zeta \in A^c$, and hence, $I(\zeta)=\infty$, which contradicts the assumption that $\zeta \in \Psi_I(\gamma)$.

If $\xi \in A\cap B^\stcomp$, there are $T_s<T_t$ such that $c \triangleq \xi(T_s)-\xi(T_t)>0$.
We claim that $d_{J_1}(\xi,\zeta)\geq c/2$ if $\zeta \in \Psi_{I}(\gamma)$.
Suppose that this is not the case and there exists $\zeta \in \Psi_{I}(\gamma)$ such that $d_{J_1}(\xi,\zeta) < c/2$.
Let $\lambda$ be a non-decreasing homeomorphism $\lambda:[0,1]\to[0,1]$ such that $\|\zeta\circ \lambda - \xi\|_\infty < c/2$, in particular, $\zeta\circ\lambda(T_s) > \xi(T_s)  -c/2$ and $\zeta\circ\lambda(T_t) < \xi(T_t) + c/2$.
Subtracting the latter inequality from the former, we get
$\zeta\circ \lambda(T_s) - \zeta\circ \lambda(T_t) > \xi(T_s) - \xi(T_t) - c  = 0$.
That is, $\zeta$ is not non-decreasing, which is contradictory to the assumption $\zeta  \in \Psi_{I}(\gamma)$.
Therefore, the claim has to be the case.

If $\xi \in A\cap B\cap C^\stcomp$, there exists an interval $[T_s,T_t]$ so that $\xi$ is continuous and $c\triangleq \xi(T_t)-\xi(T_s)>0$.
Pick $\delta$ small enough so that $(c-2\delta)(2\delta)^{\alpha-1}>\gamma$.
We will show that $d_{J_1}(\xi, \Psi_I(\gamma)) \geq \delta$.
Suppose that $\zeta \in \Psi_I(\gamma)$ and $d_{J_1}(\zeta, \xi) < \delta$,
and let $\lambda$ be a non-decreasing homeomorphism such that $\|\zeta\circ \lambda-\xi\|_\infty < \delta$.
Note that this implies that each of the jump sizes of $\zeta\circ\lambda$ in $[T_s, T_t]$ has to be less than $2\delta$.
On the other hand, $\zeta\circ\lambda(T_t) \geq \xi(T_t) - \delta$ and $\zeta\circ\lambda(T_s) \leq \xi(T_s) + \delta$, which in turn implies that $\zeta\circ\lambda(T_t) - \zeta\circ\lambda(T_s) \geq c - 2\delta$.
Since $\zeta\circ\lambda$ is a non-decreasing pure jump function,
\begin{align*}
c-2\delta
&
\leq
\zeta\circ\lambda(T_t) - \zeta\circ\lambda(T_s)
= \sum_{t\in(T_s,T_t]}\big(\zeta\circ\lambda(t)-\zeta\circ\lambda(t-)\big)
\\
&
= \sum_{t\in(T_s,T_t]}\big(\zeta\circ\lambda(t)-\zeta\circ\lambda(t-)\big)^\alpha\big(\zeta\circ\lambda(t)-\zeta\circ\lambda(t-)\big)^{1-\alpha}
\\
&
\leq
\sum_{t\in(T_s,T_t]}\big(\zeta\circ\lambda(t)-\zeta\circ\lambda(t-)\big)^\alpha
(2\delta)^{1-\alpha}.
\end{align*}
That is,
$
\sum_{t\in(T_s,T_t]}\big(\zeta\circ\lambda(t)-\zeta\circ\lambda(t-)\big)^\alpha \geq (2\delta)^{\alpha-1}(c-2\delta) > \gamma
$, which is contradictory to our assumption that $\zeta \in \Psi_I(\gamma)$.
Therefore, $d_{J_1}(\xi, \Psi_I(\gamma)) \geq \delta$.

Finally, let $\xi \in A\cap B \cap C\cap D^\stcomp$. This implies that $\xi$ admits the following representation:
$\xi=\sum_{i=1}^{\infty}x_i\mathbbm{1}_{[u_i,1]}$ where $u_i$'s are all distinct in $(0,1)$ and $\sum_{i=1}^{\infty}x_i^\alpha> \gamma$.
Choose $k$ large enough and $\delta$ small enough so that $\sum_{i=1}^{k}(x_i-2\delta)^\alpha> \gamma$.
We will show that $d_{J_1}(\xi,\Psi_I(\gamma)) \geq \delta$.
Suppose that this is not the case.
That is, there exists $\zeta \in \Psi_I(\gamma)$ so that $d_{J_1}(\xi,\zeta) < \delta$.
Let $\lambda$ be a non-decreasing homeomorphism such that $\|\zeta\circ\lambda - \xi\|_\infty < \delta$.
Thus for each $i\in \{1,\ldots,k\}$,
$
\zeta\circ\lambda(u_i)-\zeta\circ\lambda(u_i-)\geq \xi(u_i)-\xi(u_i-)-2\delta=x_i-2\delta
$,
and hence,
\begin{equation}
I(\zeta)=\sum_{t \in [0,1]}(\zeta\circ\lambda(t_i)-\zeta\circ\lambda(t_i-))^\alpha \geq \sum_{i=1}^{k}(\zeta\circ\lambda(u_i)-\zeta\circ\lambda(u_i-)) \geq \sum_{i=1}^{k}(x_i-2\delta)^\alpha>\gamma, \nonumber
\end{equation}
which contradicts  the assumption that $\zeta \in \Psi_I(\gamma)$. 

$I^{d}$ is lower semi-continuous since it is a sum of lower semi-continuous functions. 
\end{proof}

\subsection{Proof of Proposition~\ref{theorem:extended-ldp-from-decomposition}}
\begin{proof}[Proof of Proposition~\ref{theorem:extended-ldp-from-decomposition}]
We start with the extended large deviation upper bound.
For any measurable set $A$,
\begin{align}
\P\left({X}_n \in A\right)
&
=
\P\left(X_n \in A, \, d(X_n, Y_n^k) \leq \epsilon\right)
+\P\left(X_n \in A, \, d(X_n, Y_n^k)>\epsilon\right)\nonumber\\
&
\le
\underbrace{\P\big(Y_n^{k} \in A^{\epsilon}\big)}_{\triangleq \text{(I)}}
+\underbrace{\P\big(d(X_n, Y_n^k)>\epsilon\big)}_{\triangleq \text{(II)}}.
\end{align}
From the principle of the largest term and (i),
\begin{align}
\limsup_{n\rightarrow\infty}\frac{\log \P( X_n \in A)}{a_n}
&\le
\max \left\{ -\inf_{x\in A^{2\epsilon}}I_k(x),\,\limsup_{n\rightarrow\infty}\frac{1}{a_n}\log\mathbf{P}\big(d(X_n, Y_n^k)>\epsilon\big) \right\}.\nonumber
\end{align}
Now letting $k\to\infty$ and then $\epsilon\to0$, (ii) and (iv) lead to
\begin{equation*}
\limsup_{n \rightarrow \infty}\frac{1}{a_n}\log \P\left({X}_n\in A\right) \le -\lim_{\epsilon \rightarrow 0}\inf_{x\in A^\epsilon}I(x),
\end{equation*}
which is the upper bound of the extended LDP.

Turning to the lower bound, note that the lower bound is trivial if $\inf_{x\in A^\circ}I(x)=\infty$.
Therefore, we focus on the case $\inf_{x\in A^\circ}I(x) < \infty$.
Consider an arbitrary but fixed $\delta\in(0,1)$.
In view of (iii) and (iv), one can pick $\epsilon>0$ and $k\geq 1$ in such a way that
\begin{equation}\label{ineq:choice-of-epsilon-and-k-1}
-\inf_{x \in A^\circ}I(x) \le -\inf_{x \in A^{-\epsilon}}I_k(x)+\delta
\quad\qquad\text{and}\qquad\quad
%\end{equation}
%and
%\begin{equation}
\limsup_{n\to\infty}\frac{\log\P\big(d(X_n, Y_n^k)>\epsilon\big)}{a_n} \le -\inf_{x\in A^\circ} I(x) - 1.
\end{equation}
Hence
\begin{equation}\label{ineq:choice-of-epsilon-and-k-2}
\limsup_{n\to\infty}\frac{\log\P\big(d(X_n, Y_n^k)>\epsilon\big)}{a_n} \le -\inf_{x \in A^{-\epsilon}}I_k(x)+\delta-1.
\end{equation}
%We first claim that 
Now, from (\ref{ineq:choice-of-epsilon-and-k-2}) and the lower bound of the assumed extended LDP for $Y_n^k$, one can easily verify that
\begin{equation}\label{claim-in-proof-proposition-21}
\frac{\P\big(d(X_n, Y_n^k)>\epsilon\big)}{\P\big(Y_n^k \in A^{-\epsilon}\big)}\to 0
\end{equation}
as $n\to\infty$.
%To prove \eqref{claim-in-proof-proposition-21}, we  observe that
%\begin{align}\label{ineq:ratio-K-J}
%\frac{\P\big(d(X_n, Y_n^k)>\epsilon\big)}{\P\big(Y_n^k \in A^{-\epsilon}\big)}
%&
%=
%\frac{\exp\big(\log\P\big(d(X_n, Y_n^k)>\epsilon\big)\big)}{\exp\big(\log\P\big(Y_n^k \in A^{-\epsilon}\big)\big)}
%\nonumber
%\\
%&
%=
%\left\{\exp\left(\frac{\log\P\big(d(X_n, Y_n^k)>\epsilon\big)}{a_n}-\frac{\log\P\big(Y_n^k \in A^{-\epsilon}\big)}{a_n}\right)\right\}^{a_n}.	
%\end{align}
%From the lower bound of the LDP for $Y_n^k$,
%$$
%\limsup_{n\to\infty}-\frac{\log\P\big(Y_n^k \in A^{-\epsilon}\big)}{a_n}
%\le
%\inf_{\xi \in A^{-\epsilon}}I_k(\xi).
%$$
%This along with \eqref{ineq:choice-of-epsilon-and-k-2} implies that
%$
%\limsup_{n\to\infty}\exp\left(\frac{\log\P\big(d(X_n, Y_n^k)>\epsilon\big)}{a_n}-\frac{\log\P\big(Y_n^k \in A^{-\epsilon}\big)}{a_n}\right)
%\le e^{\delta-1} < 1
%$, which in turn proves \eqref{claim-in-proof-proposition-21} in view of \eqref{ineq:ratio-K-J}.
Using \eqref{claim-in-proof-proposition-21} and the first inequality of \eqref{ineq:choice-of-epsilon-and-k-1},
\begin{align*}
\liminf_{n\rightarrow \infty}\frac{1}{a_n}\log\P\left({X}_n \in A\right)
&
\ge
\liminf_{n\rightarrow \infty}\frac{1}{a_n}\log\P\left(Y_n^k \in A^{-\epsilon}, d(X_n, Y_n^k)\leq\epsilon\right)
\\
&
\ge
\liminf_{n\rightarrow \infty}\frac{1}{a_n}\log\Big(\P\left(Y_n^k \in A^{-\epsilon})-\P(d(X_n, Y_n^k)>\epsilon\right)\Big)
\\
&
=
\liminf_{n\rightarrow \infty}\frac{1}{a_n}\log\left(\P\big(Y_n^k \in A^{-\epsilon}\big)\left(1-\frac{\P\big(d(X_n, Y_n^k)>\epsilon\big)}{\P\big(Y_n^k \in A^{-\epsilon}\big)}\right)\right)
\\
&
=
\liminf_{n\rightarrow \infty}\frac{1}{a_n}\log\P\left(Y^{k}_n \in A^{-\epsilon}\right)
\ge
-\inf_{x \in A^{-\epsilon}}I_k(x)
\ge
-\inf_{x \in A} I(x)-\delta.
\end{align*}
Since $\delta$ was arbitrary in $(0,1)$, the lower bound is proved by letting $\delta \to 0$.
\end{proof}
\subsection{Proof of Lemma~\ref{lemma:equivalence-J}}
We prove Lemma~\ref{lemma:equivalence-J} in several steps.
Before we proceed, we introduce some notation and a distributional representation of the compound Poisson processes $Y_n$.
The following representation for the time-scaled compound Poisson process is a straightforward modification of the distributional representation on page 305 of \cite{LRR}; see also exercise 5.4 on page 163 of \cite{Resnick07}:
\begin{align*}
\int_{x\geq1} xN([0,n\cdot]\times dx)
\stackrel{\mathcal D}{=}
\sum_{l=1}^{\tilde N_n} Q_n^\gets (\Gamma_l)\I_{[U_l,1]}(\cdot),
\end{align*}
where
 $\Gamma_l = E_1 + E_2 + ... + E_l$; $E_i$'s are i.i.d.\ and standard exponential random variables; $U_l$'s are i.i.d.\ and uniform variables in $[0,1]$; $\tilde N_n  = N_n\big([0,1]\times [1,\infty)\big)$;
$
N_n = \sum_{l=1}^\infty \delta_{(U_l, Q_n^{\gets}(\Gamma_l))},
$
where $\delta_{(x,y)}$ is the Dirac measure concentrated on $(x,y)$;
$
Q_n(x) \triangleq  n\nu[x,\infty)$, and
$
Q_n^{\gets}(y) \triangleq \inf\{s>0: n\nu[s,\infty)< y\}
$.
It should be noted that $\tilde N_n$ is the largest $l$  such that $\Gamma_l \leq n\nu_1$, where $\nu_1 \triangleq \nu[1,\infty)$, and hence, $\tilde N_n \sim \text{Poisson}(n\nu_1)$.
Recall the definition of $\bar {J}_n^k$---the process which keeps (up to) the $k$ biggest $Z_i$'s among $Z_1,\ldots, Z_{N(n)}$. 
From this and the observation that $Q_n^\gets(\Gamma_l)$ is decreasing in $l$, we conclude that $\bar J_n^k$ has the following distributional representation:
\begin{align*}
\bar J_n^k(\cdot)
\stackrel{\mathcal D}{=}
\underbrace{\frac{1}{n} \sum_{i=1}^{k} Q_n^\gets(\Gamma_i)\I_{[U_i,1]}{(\cdot)}}_{\triangleq \hat{J}_n^{\leqslant k}{(\cdot)}}
-
\underbrace{\frac{1}{n}\I\{\tilde N_n<k\}\sum_{i=\tilde N_n+1}^{k}Q_n^\gets(\Gamma_i)\I_{[U_i,1]}{(\cdot)}}_{\triangleq \check{J}_n^{\leqslant k}{(\cdot)}}.
\end{align*}
Roughly speaking, $(Q_n^\gets(\Gamma_1)/n,\ldots,Q_n^\gets(\Gamma_k)/n)$ represents the $k$ largest jump sizes of $\bar Y_n$, and $\hat J_n^{\leqslant k}$ can be regarded as the process obtained by keeping only $k$ largest jumps of $\bar Y_n$ while disregarding the rest.
Lemma~\ref{lemma:ldp-for-k-biggest-jump-sizes} and Corollary~\ref{lemma:ldp-for-k-biggest-jump-sizes-and-their-times} prove an LDP for $(Q_n^\gets(\Gamma_1)/n,\ldots,Q_n^\gets(\Gamma_k)/n,U_1, \ldots, U_k)$.
Consequently, Lemma~\ref{lemma:sample-path-ldp-for-J-hat} yields a sample path LDP for $\hat J_n^{\leqslant k}$.
Finally, Lemma~\ref{lemma:equivalence-J} is proved by showing that $\bar J_n^k$ satisfies the same LDP as the one satisfied by $\hat J_n^{\leqslant k}$.

\begin{lemma}\label{lemma:ldp-for-k-biggest-jump-sizes}
$\Big(Q_n^\gets(\Gamma_1)/n,Q_n^\gets(\Gamma_2)/n,....,Q_n^\gets(\Gamma_k)/n\Big)$ satisfies a large deviation principle in $\R_{+}^k$ with normalization $L(n)n^\alpha,$ and with good rate function
\begin{equation}\label{def-I-check-k}
\check I_k(x_1,...x_k)=
\begin{cases}
\sum_{i=1}^{k} x_i^\alpha & \mathrm{if}\ x_1 \ge x_2 \ge \dots \ge x_k \ge 0\\
\infty, & \mathrm{o.w.}
\end{cases}.
\end{equation}
	
\end{lemma}

\begin{proof}
It is straightforward to check that $\check{I}_k$ is a good rate function.
For each $f \in \mathcal{C}_b(\R_{+}^k)$, let	
\begin{equation}\label{eq:brycslemma}
\Lambda_f^*\triangleq \lim_{n\rightarrow\infty}\frac{1}{L(n)n^\alpha}\log\left({\E  e^{L(n)n^\alpha f\big(Q_n^\gets(\Gamma_1)/n,\,Q_n^{\gets}(\Gamma_2)/n,\ldots,\,Q_n^\gets(\Gamma_k)/n\big)}}
\right).
\end{equation}	
Applying Bryc's inverse Varadhan lemma (see e.g.\ Theorem~4.4.13 of \citealp{dembo2010large}), we can show that $(Q_n^\gets(\Gamma_1)/n,\ldots,Q_n^\gets(\Gamma_k)/n)$ satisfies a large deviation principle with speed $L(n)n^\alpha$ and  good rate function $\check I_k(x)$ if
\begin{equation}\label{eq:inverse-varadhan-condition}
\Lambda_f^*= \sup_{x \in \R_{+}^k}\{f(x)-\check I_k(x)\}
\end{equation}
for every $f \in \mathcal{C}_b(\R_{+}^k)$.

To prove \eqref{eq:inverse-varadhan-condition}, fix $f \in C_b(\R_+^k)$ and let $M$ be a constant  such that $|f(x)|\le M$ for all $x\in \R_+^k$.
We first claim that the supremum of $\Lambda_f \triangleq f-\check I_k$ is finite and attained.
Pick a constant $R$ so that $R^\alpha > 2M$.
% and let $A_R\triangleq \{(x_1,\ldots,x_k)\in \R_+^k: R\geq x_1\geq \cdots\geq x_k\}$.
Since $\Lambda_f$ is upper semi-continuous on $[0,R]^k$, which is compact, there exists a maximizer $\hat x \triangleq (\hat x_1,\dots,\hat x_k)$ of $\Lambda_f$ on $[0,R]^k$.
Since
\begin{equation*}
\sup_{x \in [0,R]^k}\{ f(x)-\check I_k(x)\}
\geq 
\sup_{x \in [0,R]^k}f(x)
\geq -M
\end{equation*}
and
\begin{equation*}
\sup_{x \in \R_+^k\setminus [0,R]^k}\{ f(x)-\check I_k(x)\}
<
\sup_{x \in \R_+^k\setminus [0,R]^k}\{ f(x)- 2M\} \leq -M,
\end{equation*}
$\hat x$ is, in fact, a global maximizer.
%} %%%%%%%%%%%%%%%%%%%%%%%%%%%%%%%%% END \cf %%%%%%%%%%%%%%%%%%%%%%%%%%%%%%%%
Now, it is enough to prove that
\begin{equation}\label{ineq:bryc-liminf-limsup}
\Lambda_f(\hat x)
\leq
\liminf_{n\to\infty} \frac{1}{L(n)n^\alpha}\log \Upsilon_{f}(n)
\qquad\text{ and }\qquad
\limsup_{n\to\infty} \frac{1}{L(n)n^\alpha}\log \Upsilon_{f}(n)
\leq \Lambda_f(\hat x),
\end{equation}
where
\begin{equation*}
\Upsilon_{f}(n)
\triangleq \int_{\R_{+}^k}e^{L(n)n^\alpha f\big(Q_n^\gets(y_1)/n,\ldots,Q_n^\gets(y_1+\cdots+y_k)/n\big)}e^{-\sum_{i=1}^{k}y_i}\,dy_1\ldots dy_k.
\end{equation*}
We start with the lower bound---i.e., the first inequality of \eqref{ineq:bryc-liminf-limsup}.
Fix an arbitrary $\epsilon>0$.
Since $\Lambda_f$ is continuous on $A_\infty\triangleq \{(x_1,\ldots,x_k)\in\R_+^k: x_1\geq \cdots \geq x_k\}$, there exists $\delta>0 $  such that $x\in B(\hat x; 2\sqrt k\delta)\cap A_\infty$ implies $\Lambda_f(x) \geq \Lambda_f(\hat x)-\epsilon$.
Since $\prod_{j=1}^k [\hat x_j+\delta, \hat x_j+2\delta] \subseteq B(\hat x; 2\sqrt k \delta)$ and $Q_n^\gets(\cdot)$ is non-increasing,
$Q_n^\gets\big(\sum_{i=1}^j y_i\big)/n \in [\hat x_j+\delta, \hat x_j+2\delta]$  for all $j=1,\ldots, k$ implies 
$(Q_n^\gets(y_1)/n,\ldots, Q_n^\gets (y_1+\cdots+ y_k)/n) \in B(\hat x; 2\sqrt k \delta)$, and hence,
\begin{equation}\label{eq:lbcontinuity}
\Lambda_f\left( Q_n^\gets(y_1)/n,...,Q_n^\gets(y_1+\cdots+y_k)/n \right) \ge \Lambda_f(\hat x) - \epsilon.
\end{equation}
That is, if we define $D_n^j (=D_n^{y_1,\ldots,y_{j-1}})$ as
$$
D_n^j \triangleq \{y_j\in \mathbb R_+:Q_n^\gets\big(\textstyle\sum_{i=1}^{j}y_i \big)/n \in [\hat x_j+\delta,\hat x_j+2\delta]\},
$$
then
\eqref{eq:lbcontinuity} holds for $(y_1,\ldots,y_k)$'s such that $y_j\in D_n^j$ for $j=1,\ldots, k$. Therefore,
\cf{
From the properties of the general inverse we can deduce that if,
\begin{equation}\label{eq:boundsgeneralinverse}
Q_n^{\gets}(x_1+...x_j)/n \in [\hat x_j+\delta,\hat x_j+2\delta]
\end{equation}
then,
\begin{equation}\label{eq:summationlbound}
Q_n\big(n(\hat x_j+2\delta)\big)\le x_1+...+x_j \le Q_n\big(n(\hat x_j+\delta)\big).
\end{equation}
}
\begin{align}
\Upsilon_{f}(n)
&
=
\int_{\R_+^k}e^{L(n)n^\alpha\Lambda_f\big(Q_n^\gets(y_1)/n, \ldots,Q_n^\gets(y_1+\cdots+y_k)/n\big)+L(n)\sum_{i=1}^{k}Q_n^\gets(\sum_{j=1}^i y_j)^\alpha-\sum_{i=1}^{k}y_i}\,dy_1\ldots dy_k
\nonumber
\\
&
\ge
\int_{D_n^1}\dots\int_{D_n^k}e^{L(n)n^\alpha \Lambda_f\big(Q_n^\gets(y_1)/n, \ldots,Q_n^\gets(y_1+\cdots+y_k)/n\big)+L(n)\sum_{i=1}^{k}Q_n^\gets(\sum_{j=1}^i y_j)^\alpha-\sum_{i=1}^{k}y_i}dy_k\ldots dy_1
\nonumber
\\
&
\ge
\int_{D_n^1}\dots\int_{D_n^{k}}e^{L(n)n^{\alpha}\big(\Lambda_f(\hat x_1,\ldots, \hat x_k)-\epsilon\big)}e^{L(n)\sum_{i=1}^{k}Q_n^\gets(\sum_{j=1}^i y_j)^\alpha-\sum_{i=1}^{k}y_i}dy_k\ldots dy_1
\nonumber
\\
&
\ge
\int_{D_n^1}\dots\int_{D_n^k}e^{L(n)n^{\alpha}\big(\Lambda_f(\hat x_1,...,\hat x_k)-\epsilon\big)}e^{L(n)\sum_{i=1}^{k}\big(n(\hat x_i+\delta)\big)^\alpha-\sum_{i=1}^{k}y_i}dy_k\ldots dy_1
\nonumber
\\
&
=
\underbrace{e^{L(n)n^{\alpha}\big(\Lambda_f(\hat x_1,...,\hat x_k)-\epsilon\big)}}_{\triangleq \text{(I)}_n}\
\underbrace{e^{L(n)\sum_{i=1}^{k}\big(n(\hat x_i+\delta)\big)^\alpha}}_{\triangleq \text{(II)}_n}
\underbrace{\int_{D_n^1}\dots\int_{D_n^k}e^{-\sum_{i=1}^{k}y_i}dy_k\ldots dy_1}_{\triangleq \text{(III)}_n},
\label{ineq:upsilon}
\end{align}
where the first equality is obtained by adding and subtracting $L(n)\sum_{i=1}^k Q_n^\gets(\sum_{j=1}^i y_j)^\alpha$ to the exponent of the integrand.
Note that by the construction of $D_n^j$'s,
$$Q_n\big( n(\hat x_j +2 \delta)\big) \leq y_1+\cdots + y_j \leq Q_n\big(n(\hat x_j + \delta)\big)$$
on the domain of the integral in (III)$_n$, and hence,
\begin{equation}
%\int_{D_n^1}\dots\int_{D_n^k}e^{-\sum_{i=1}^{k}y_i}dy_1..dy_k
\text{(III)}_n
\geq e^{-Q_n(n(\hat x_k+\delta))}\prod_{i=1}^{k}\Big(Q_n\big(n(\hat x_i+\delta)\big)-Q_n\big(n(\hat x_i+2\delta)\big)\Big).\label{ineq:integrals}
\end{equation}
Since $Q_n\big(n(\hat x_k + \delta)\big) \to 0$ and ${L\left(n(\hat x_i+\delta)\right)n^{\alpha}(\hat x_i +\delta)^\alpha-L\left(n(\hat x_i +2\delta)\right)n^{\alpha}(\hat x_i +2\delta)^\alpha}\to -\infty$ for each $i$,
\begin{align}
&
\liminf_{n\rightarrow\infty}\frac{1}{L(n) n^{\alpha}}\log \text{(III)}_n
\nonumber
\\
&
\geq
\liminf_{n\rightarrow\infty}\frac{1}{L(n)n^\alpha}\Big(-Q_n\big(n(\hat x_k + \delta)\big)\Big)
+\sum_{i=1}^k\liminf_{n\rightarrow\infty}\frac{1}{L(n)n^\alpha}\log \Big(Q_n\big(n(\hat x_i+\delta)\big)-Q_n\big(n(\hat x_i +2\delta)\big)\Big)
\nonumber
\\
&=\sum_{i=1}^k\liminf_{n\rightarrow\infty}\frac{1}{L(n)n^\alpha}\log \left( ne^{-L\left(n(\hat x_i+\delta)\right)n^{\alpha}(\hat x_i+\delta)^\alpha}\left(1-e^{L\left(n(\hat x_i+\delta)\right)n^{\alpha}(\hat x_i+\delta)^\alpha-L\left(n(\hat x_i +2\delta)\right)n^{\alpha}(\hat x_i+2\delta)^\alpha}\right)\right)
\nonumber
\\
&
= \sum_{i=1}^k\liminf_{n\to\infty}\left(\frac{-L\left(n(\hat x_i+\delta)\right)n^{\alpha}(\hat x_i+\delta)^\alpha}{L(n)n^\alpha}
+
\frac{\log\left(1-e^{L\left(n(\hat x_i+\delta)\right)n^{\alpha}(\hat x_i+\delta)^\alpha-L\left(n(\hat x_i+2\delta)\right)n^{\alpha}(\hat x_i+2\delta)^\alpha}\right)}{L(n)n^\alpha}\right)
\nonumber
\\
&
=
-\sum_{i=1}^k(\hat x_i+\delta)^\alpha.\label{eq:III}
\end{align}
From this along with
\begin{align*}
\liminf_{n\rightarrow\infty}\frac{1}{n^{\alpha}L(n)}\log \text{(I)}_n
&
=
\liminf_{n\rightarrow\infty}\frac{1}{n^{\alpha}L(n)}\log \Big(e^{n^{\alpha}L(n)\big(\Lambda_f(\hat x_1,\dots,\hat x_k)-\epsilon\big)}\Big)
=
\Lambda_f(\hat x_1,\dots,\hat x_k)-\epsilon
\end{align*}
and
\begin{equation*}
\liminf_{n\rightarrow\infty}\frac{1}{n^{\alpha}L(n)}\log \text{(II)}_n
=
\liminf_{n\rightarrow\infty}\frac{1}{n^{\alpha}L(n)}\log \big(e^{L(n)\sum_{i=1}^{k}(n(\hat x_i +\delta))^\alpha} \big)
= \sum_{i=1}^{k}(\hat x_i+\delta)^\alpha,
\end{equation*}
we arrive at
\begin{equation}
\Lambda_f(\hat x) - \epsilon \leq \liminf_{n\to\infty} \frac{1}{L(n)n^\alpha}\log \Upsilon_f(n).
\end{equation}
Letting $\epsilon\to 0$, we obtain the lower bound of \eqref{ineq:bryc-liminf-limsup}.

Turning to the upper bound, consider
\begin{equation*}
D_{R,n}\triangleq \{(y_1,y_2,\ldots,y_k): Q_n^\gets(y_1)/n\le R\},
\end{equation*}
and decompose $\Upsilon_f(n)$ into two parts:
\begin{align*}
\Upsilon_f(n)
&
=
\int_{D_{R,n}}e^{L(n)n^{\alpha}f\big(Q_n^\gets(x_1)/n,\ldots ,Q_n^\gets(x_1+\cdots+x_k)/n\big)}e^{-\sum_{i=1}^{k}x_i}dx_1 \dots dx_k\nonumber
\\
&
\qquad
+
\int_{D_{R,n}^c}e^{L(n)n^{\alpha}f\big(Q_n^\gets(x_1)/n,\ldots ,Q_n^\gets(x_1+\cdots+x_k)/n\big)}e^{-\sum_{i=1}^{k}x_i}dx_1\dots dx_k.
\end{align*}
We first evaluate the integral over $D_{R,n}^c$.
Since $|f|\le M$,
\begin{align}
&
\int_{D_{R,n}^c}e^{L(n)n^{\alpha}f\big(Q_n^\gets(x_1)/n,\ldots ,Q_n^\gets(x_1+\cdots+x_k)/n\big)}e^{-\sum_{i=1}^{k}x_i}dx_1\dots dx_k
\nonumber
\\
&
=
\int e^{L(n)n^{\alpha}f\big(Q_n^\gets(x_1)/n,\ldots ,Q_n^\gets(x_1+\cdots+x_k)/n\big)}e^{-\sum_{i=1}^{k}x_i}\one{Q_n^\gets(x_1)/n > R }dx_1\dots dx_k
\nonumber
\\
&
=
\int e^{L(n)n^{\alpha}f\big(Q_n^\gets(x_1)/n,\ldots ,Q_n^\gets(x_1+\cdots+x_k)/n\big)}e^{-\sum_{i=1}^{k}x_i}\one{x_1\leq Q_n(nR)}dx_1 \dots dx_k
\nonumber
\\
&
\le
\int e^{L(n)n^{\alpha}M}e^{-\sum_{i=1}^{k}x_i}\mathbbm{1}_{\{x_1 \le Q_n(nR)\} }dx_1 \dots dx_k
%\\
%&
%\le
%e^{L(n)n^{\alpha}M}\int e^{-\sum_{i=1}^{k}x_i}\mathbbm{1}_{\{x_1\le Q_n(nR)\} }dx_1\dots dx_k
%\\
%&
\le
e^{L(n)n^{\alpha}M}\big(1-e^{-Q_n(nR)}\big)
\nonumber
\\
&
\le
e^{L(n)n^{\alpha}M}Q_n(nR)
.
\label{ineq:integral-over-DRn-complement}
\end{align}
Turning to the integral over $D_{R,n}$, fix $\epsilon>0$ and pick $\{\check x^{(1)},\ldots,\check x^{(m)}\}\subset \R_+^k$ in such a way that \allowbreak $\left\{\prod_{j=1}^k[\check x^{(l)}_j-\epsilon,\check x^{(l)}_j+\epsilon]\right\}_{l=1,\ldots,m}$ covers $A_R$. Set
\begin{equation*}
H_{R,n,l}\triangleq\Big\{(y_1,\ldots,y_k)\in \R_+^k: Q_n^\gets(y_1)/n \in \big[\check x_1^{(l)}-\epsilon,\check x_1^{(l)}+\epsilon\big],\ldots,Q_n^\gets(y_1+\ldots+y_k)/n \in \big[\check x_k^{(l)}-\epsilon,\check x_k^{(l)}+\epsilon\big]\Big\}.
\end{equation*}
Then $D_{R,n}\subseteq\bigcup_{l=1}^{m}H_{R,n,l}$, and hence,
\begin{align}\label{ineq:integral-over-DRn}
&
\int_{D_{R,n}}e^{L(n)n^{\alpha}f\big(Q_n^\gets(y_1)/n,\ldots,Q_n^\gets(y_1+\cdots +y_k)/n\big)}e^{-\sum_{i=1}^{k}y_i}dy_1\cdots dy_k\nonumber
\\
&
\le
\sum_{l=1}^{m}\int_{H_{R,n,l}}e^{L(n)n^{\alpha}f\big(Q_n^\gets(y_1)/n,...,Q_n^\gets(y_1+\ldots+y_k)/n\big)}e^{-\sum_{i=1}^{k}y_i}dy_1 \cdots dy_k\nonumber
\\
&
=
\sum_{l=1}^{m}\int_{H_{R,n,l}}e^{L(n)n^{\alpha}\Lambda_f\big(Q_n^\gets(y_1)/n,...,Q_n^\gets(y_1+\cdots+y_k)/n\big)}e^{L(n)\sum_{i=1}^{k}Q_n^\gets(\sum_{j=1}^i y_j)^\alpha-\sum_{i=1}^{k}y_i}dy_1..dy_k\nonumber
\\
&
\leq
\sum_{l=1}^{m}\int_{H_{R,n,l}}e^{L(n)n^{\alpha}\Lambda_f(\hat x_1,\hat x_2,\dots,\hat x_k)}e^{L(n)\sum_{i=1}^{k}Q_n^\gets(\sum_{j=1}^i y_j)^\alpha-\sum_{i=1}^{k}y_i}\,dy_1\cdots dy_k \nonumber \\
&
=
\sum_{l=1}^{m}\underbrace{e^{L(n)n^{\alpha}\Lambda_f(\hat x_1 ,\hat x_2 ,...\hat x_k )}\int_{ H_{R,n,l}}e^{L(n)\sum_{i=1}^{k}Q_n^\gets(\sum_{j=1}^i y_j)^\alpha-\sum_{i=1}^{k}y_i}\,dy_1\cdots dy_k }_{\triangleq \text{H}(R,n,l)},
\end{align}
where the first equality is obtained by adding and subtracting $L(n)\sum_{i=1}^kQ_n^\gets(\sum_{j=1}^i y_j)^\alpha$ to the exponent of the integrand.
Since
\begin{equation*}
Q_n^\gets({\textstyle\sum_{j=1}^i} y_j)/n \in \big[\check x_i^{(l)}-\epsilon,\,\check x_i^{(l)}+\epsilon\big]
\implies
Q_n\big(n(\check x_i^{(l)}+\epsilon)\big)\le {\textstyle \sum_{j=1}^{i}}y_j \le Q_n\big(n(\check x_i^{(l)}-\epsilon)\big),
\end{equation*}
we can bound the integral in \eqref{ineq:integral-over-DRn} as follows:
\begin{align}\label{ineq:integral-over-HRnl}
&
\int_{{H_{R,n,l}}}e^{L(n)\sum_{i=1}^{k}Q_n^\gets({\sum_{j=1}^i} y_j)^\alpha-\sum_{i=1}^{k}y_i}\,dy_1\cdots ,dy_k\nonumber\\
&
\leq
\int_{{H_{R,n,l}}}e^{L(n)\sum_{i=1}^{k}\big(n(\check x_i^{(l)}-\epsilon)\big)^\alpha-\sum_{i=1}^{k}y_i}\,dy_1 \dots \,dy_k\nonumber\\
&
\leq
\int_{{H_{R,n,l}}}e^{L(n)\sum_{i=1}^{k}\big(n(\check x_i^{(l)}-\epsilon)\big)^\alpha-Q_n\big(n(\check x_k^{(l)}+\epsilon)\big)}\,dy_1 \dots \,dy_k\nonumber\\
&
=
e^{L(n)\sum_{i=1}^{k}\big(n(\check x_i^{(l)}-\epsilon)\big)^\alpha-Q_n\big(n(\check x_k^{(l)}+\epsilon)\big)}\int_{{H_{R,n,l}}}\,dy_1 \dots \,dy_k\nonumber\\
&
=
e^{L(n)n^\alpha \sum_{i=1}^{k}(\check x_i^{(l)}-\epsilon)^\alpha-Q_n\big(n(\check x_k^{(l)}+\epsilon)\big)}\prod_{i=1}^{k}\Big(Q_n\big(n(\check x_i^{(l)}-\epsilon)\big)-Q_n\big(n(\check x_i^{(l)}+\epsilon)\big)\Big).
\end{align}
With \eqref{ineq:integral-over-DRn} and \eqref{ineq:integral-over-HRnl}, a straightforward calculation as in the lower bound leads to
\begin{align*}
&\limsup_{n\to\infty}\frac{1}{L(n)n^\alpha} \log \mathrm{H}(R,n,l)
\\
&
\leq
\limsup_{n\to\infty}\frac{1}{L(n)n^\alpha}
\log \left(e^{L(n)n^{\alpha}\Lambda_f(\hat x_1 ,\hat x_2 ,...\hat x_k )}\right)
+
\limsup_{n\to\infty}\frac{1}{L(n)n^\alpha}
\log \left(e^{L(n)n^\alpha\sum_{i=1}^{k}(\hat x_i^{(l)}-\epsilon)^\alpha-Q_n\big(n(\hat x_k^{(l)}+\epsilon)\big)}\right)
\\
&
\qquad+\sum_{i=1}^k
\limsup_{n\to\infty}\frac{1}{L(n)n^\alpha}
\log\Big(Q_n\big(n(\hat x_i^{(l)}-\epsilon)\big)-Q_n\big(n(\hat x_i^{(l)}+\epsilon)\big)\Big)
\\
&
=
\Lambda_f(\hat x_1,\ldots,\hat x_k).
\end{align*}
Therefore,
\begin{align*}
&\limsup_{n\to\infty}\frac{1}{L(n)n^\alpha} \log \Upsilon_f(n)
\\
&
=
\limsup_{n\to\infty}\frac{1}{L(n)n^\alpha}\log \left(e^{L(n)n^{\alpha}M}Q_n(nR)\right)
\vee
\max\left\{ \limsup_{n\to\infty}\frac{1}{L(n)n^\alpha} \log \mathrm H(R,n,l)\right\}
\\
&
\leq
(M-R^\alpha) \vee \Lambda_f(\hat x_1,\ldots,\hat x_k)
=
(M-R^\alpha) \vee \sup_{x\in \R_+^k} \{f(x) - \check I_k(x)\}.
\end{align*}
Since $R$ was arbitrary, we can send $R\to\infty$ to arrive at the desired upper bound of \eqref{ineq:bryc-liminf-limsup}.
\end{proof}

The following corollary is immediate from Lemma~\ref{lemma:ldp-for-k-biggest-jump-sizes} and Theorem 4.14 of \cite{ganesh2004big}.

\begin{corollary}
\label{lemma:ldp-for-k-biggest-jump-sizes-and-their-times}
$(Q_n^\gets(\Gamma_1)/n,\dots,Q_n^\gets(\Gamma_k)/n,U_1,\dots,U_k)$ satisfies a large deviation principle in  $\R_+^k\times[0,1]^k$ with speed $L(n)n^\alpha$ and the good rate function
\begin{equation}
\hat I_k(x_1,\dots,x_k,u_1,\dots,u_k)\triangleq
\begin{cases}
\sum_{i=1}^{k} x_i^\alpha & \text{if }\ x_1 \ge x_2 \ge \dots \ge x_k \ \text{ and }\ u_1,\ldots, u_k \in [0,1],\\
\infty, & otherwise.
\end{cases}
\end{equation}
\end{corollary}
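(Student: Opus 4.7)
The plan is to combine Lemma~\ref{lemma:ldp-for-k-biggest-jump-sizes} with a trivial LDP for the mark vector $(U_1,\ldots,U_k)$ via the product principle for LDPs of independent sequences (which is exactly Theorem 4.14 of \citealp{ganesh2004big}). By the construction of the Poisson representation recalled before Lemma~\ref{lemma:ldp-for-k-biggest-jump-sizes}, the $U_i$'s are i.i.d.\ uniform on $[0,1]$ and are independent of the arrival epochs $(\Gamma_1,\ldots,\Gamma_k)$; since $Q_n^\gets(\Gamma_i)/n$ is a deterministic function of $\Gamma_i$ alone, the two random vectors $\bigl(Q_n^\gets(\Gamma_1)/n,\ldots,Q_n^\gets(\Gamma_k)/n\bigr)$ and $(U_1,\ldots,U_k)$ are independent.

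Next I would establish a degenerate LDP for $(U_1,\ldots,U_k)$ at the non-standard speed $L(n)n^\alpha$. Since the distribution of this vector does not depend on $n$, for any Borel $B\subseteq[0,1]^k$ the probability $\P((U_1,\ldots,U_k)\in B)$ is a fixed constant (equal to the Lebesgue measure of $B$). Hence for any open $G\subseteq [0,1]^k$ with $G\neq\emptyset$ and any closed $F\subseteq[0,1]^k$,
\begin{equation*}
\lim_{n\to\infty}\frac{\log\P((U_1,\ldots,U_k)\in G)}{L(n)n^\alpha}=0,\qquad \limsup_{n\to\infty}\frac{\log\P((U_1,\ldots,U_k)\in F)}{L(n)n^\alpha}\leq 0,
\end{equation*}
so $(U_1,\ldots,U_k)$ obeys the LDP at speed $L(n)n^\alpha$ with the good rate function $J(u_1,\ldots,u_k)=0$ on $[0,1]^k$ and $+\infty$ off $[0,1]^k$.

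Finally I would invoke the product LDP for independent sequences to conclude that $\bigl(Q_n^\gets(\Gamma_1)/n,\ldots,Q_n^\gets(\Gamma_k)/n,U_1,\ldots,U_k\bigr)$ obeys the LDP at speed $L(n)n^\alpha$ with rate function
\begin{equation*}
(x_1,\ldots,x_k,u_1,\ldots,u_k)\mapsto \check I_k(x_1,\ldots,x_k)+J(u_1,\ldots,u_k),
\end{equation*}
which coincides with $\hat I_k$. Goodness is automatic: the sub-level sets of $\check I_k$ are compact in $\R_+^k$ by Lemma~\ref{lemma:ldp-for-k-biggest-jump-sizes}, and $[0,1]^k$ is itself compact, so the sub-level sets of $\hat I_k$ are compact in $\R_+^k\times[0,1]^k$.

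There is no real obstacle in this proof; the only point that deserves a line of care is that the marks satisfy an LDP at the \emph{non-standard} superlinear speed $L(n)n^\alpha$ despite having an $n$-independent distribution, which forces the marginal rate function to be identically zero on its support. Once this is in place, the product principle delivers the statement verbatim.
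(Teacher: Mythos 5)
Your proof is correct and follows exactly the route the paper takes: the paper dispatches this corollary in a single line by citing Lemma~\ref{lemma:ldp-for-k-biggest-jump-sizes} together with Theorem~4.14 of \cite{ganesh2004big} (the product LDP for independent sequences), which is precisely the argument you have spelled out. Your added remarks about independence of the $U_i$'s from the $\Gamma_i$'s and the degenerate LDP for the marks at the superlinear speed $L(n)n^\alpha$ are the implicit details the paper leaves to the reader.
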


Recall that $ \hat{J}_n^{\leqslant k} = \frac{1}{n} \sum_{i=1}^{k} Q_n^\gets(\Gamma_i)\I_{[U_i,1]}$ and the rate function $I_k$ defined in \eqref{eq:rate-function-for-Ik}.
We next prove a sample path LDP for $\hat{J}_n^{\leqslant k}$.
\begin{lemma}\label{lemma:sample-path-ldp-for-J-hat}
$\hat{J}_n^{\leqslant k}$ satisfies the LDP in $(\mathbb D, \mathcal T_{J_1})$ with speed $L(n)n^\alpha$ and rate function $I_k$.
%\begin{equation}
%I_k(\xi) =
%\begin{cases}
%\sum_{t:\xi(t)\neq\xi(t-)}\left(\xi(t)-\xi(t-)\right)^\alpha
%&
%\text{if \ $\xi\in \mathbb D_{\leqslant k}$}
%\\
%\infty
%&
%o.w.
%\end{cases}
%.
%\end{equation}
\end{lemma}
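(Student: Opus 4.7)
The plan is a contraction-principle argument built on the Euclidean LDP of Corollary~\ref{lemma:ldp-for-k-biggest-jump-sizes-and-their-times}. Define the map $T:\R_+^k\times[0,1]^k\to \mathbb D$ by $T(x_1,\ldots,x_k,u_1,\ldots,u_k)\triangleq \sum_{i=1}^k x_i\I_{[u_i,1]}$, so that $\hat J_n^{\leqslant k}=T(Q_n^\gets(\Gamma_1)/n,\ldots,Q_n^\gets(\Gamma_k)/n,U_1,\ldots,U_k)$. The main obstacle is that $T$ is \emph{not} $J_1$-continuous on its full Euclidean domain: two coordinates $u_i,u_j$ with $x_i,x_j>0$ that collide in the limit produce pre-limit paths with two nearby jumps that cannot be matched in $J_1$ to the single jump they would merge into, and a similar issue arises when a jump time drifts to $\{0,1\}$. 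However, an explicit piecewise-linear time-change sending $u_i^n\mapsto u_i$ on the indices with $x_i>0$ shows that $T$ \emph{is} continuous on
$$E'\triangleq \big\{(x,u)\in\R_+^k\times[0,1]^k:\ u_i\in(0,1),\ u_i\neq u_j\text{ whenever }x_i,x_j>0\text{ and }i\neq j\big\},$$
and, since the $U_i$'s are a.s.\ distinct elements of $(0,1)$, the Euclidean vector $(Q_n^\gets(\Gamma_i)/n,U_i)_{i=1}^k$ lies in $E'$ with probability one.

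For the lower bound, given an open $G$ and any $\xi\in G$ with $I_k(\xi)<\infty$, I would write $\xi=\sum_{i=1}^m y_i\I_{[v_i,1]}$ with $y_1\geq\cdots\geq y_m>0$ and $v_1,\ldots,v_m\in(0,1)$ distinct, then pad with $y_{m+1}=\cdots=y_k=0$ and additional distinct $v_{m+1},\ldots,v_k\in(0,1)$ so that $(y,v)\in E'$, $T(y,v)=\xi$, and $\hat I_k(y,v)=I_k(\xi)$. Continuity of $T$ at $(y,v)$ produces a Euclidean open neighborhood $V$ with $T(V)\subseteq G$, and the Euclidean LDP lower bound gives
$$
\liminf_{n\to\infty}\frac{\log\P(\hat J_n^{\leqslant k}\in G)}{L(n)n^\alpha}\;\geq\;-\inf_{(x,u)\in V}\hat I_k(x,u)\;\geq\;-I_k(\xi),
$$
and taking the infimum over $\xi\in G$ closes the lower bound.

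For the upper bound, given a $J_1$-closed $F$, the almost-sure inclusion in $E'$ lets me write $\P(\hat J_n^{\leqslant k}\in F)=\P((Q_n^\gets(\Gamma_i)/n,U_i)\in T^{-1}(F)\cap E')$. Enclosing this event in its Euclidean closure $\overline{T^{-1}(F)\cap E'}$ and invoking the Euclidean LDP upper bound of Corollary~\ref{lemma:ldp-for-k-biggest-jump-sizes-and-their-times} reduces the whole matter to the inequality
$$
\inf_{(x,u)\in\overline{T^{-1}(F)\cap E'}}\hat I_k(x,u)\;\geq\;\inf_{\xi\in F}I_k(\xi).
$$
This is the step where the $J_1$-discontinuity of $T$ would normally be fatal: a limit point $(x,u)\notin E'$ need not have $T(x,u)\in F$, since $M_1$-limits are not $J_1$-limits. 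The key trick I would use is to read the rate \emph{along} the pre-limit instead of trying to pass to the $J_1$ limit. Concretely, for any $(x,u)$ in the closure with $\hat I_k(x,u)<\infty$ and any approximating sequence $(x^n,u^n)\in T^{-1}(F)\cap E'$, membership in $E'$ guarantees that the jumps of $T(x^n,u^n)\in\mathbb D_{\leqslant k}$ are precisely the positive $x_i^n$'s at distinct interior times, so
$I_k(T(x^n,u^n))=\sum_{i=1}^k(x_i^n)^\alpha\longrightarrow\sum_{i=1}^k x_i^\alpha=\hat I_k(x,u)$, and combining with $T(x^n,u^n)\in F$ gives $\inf_F I_k\leq \hat I_k(x,u)$. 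This is the main subtlety of the proof: one bypasses the failure of the $J_1$ limit to exist by evaluating $I_k$ on the approximating sequence, where it coincides with $\hat I_k$ by the identity $I_k\circ T=\sum_i x_i^\alpha$ on $E'$.
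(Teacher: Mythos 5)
Your proposal is correct and proves the lemma, but the bookkeeping is organized differently from the paper's argument, so it is worth spelling out the contrast. Both proofs have the same skeleton: start from the Euclidean LDP of Corollary~\ref{lemma:ldp-for-k-biggest-jump-sizes-and-their-times}, recognize that $T_k(x,u)=\sum_i x_i\I_{[u_i,1]}$ is not $J_1$-continuous, and work around that gap. The paper does not restrict the domain at all: for the upper bound it encloses $T_k^{-1}(F)$ in its Euclidean closure, applies the Euclidean upper bound, and then proves $\inf_{T_k^{-1}(F)}\hat I_k\le\inf_{T_k^{-1}(F)^-}\hat I_k$ by a contradiction argument that uses the continuity of the \emph{unsorted} functional $\bar I_k(x,u)=\sum_i x_i^\alpha$ followed by a permutation step to return to $\hat I_k$; for the lower bound it passes from an arbitrary preimage point in $T_k^{-1}(G)$ to a point in the \emph{interior} $T_k^{-1}(G)^\circ$ by merging collided jump times, relocating boundary jump times, and permuting, using subadditivity of $x\mapsto x^\alpha$; and it first establishes the LDP on $\mathbb D_{\leqslant k}$ and then lifts it to $\mathbb D$ via Lemma~4.1.5 of Dembo--Zeitouni. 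You instead isolate the full-measure set $E'$ on which $T_k$ \emph{is} $J_1$-continuous and on which the identity $I_k\circ T_k=\bar I_k$ holds; the upper bound then comes from reading the rate along the approximating sequence $(x^n,u^n)\in T_k^{-1}(F)\cap E'$ (so the convergence $\sum_i(x_i^n)^\alpha\to\hat I_k(x,u)$ replaces the paper's continuity-of-$\bar I_k$-plus-permutation step), and the lower bound from local continuity of $T_k$ at a padded representative $(y,v)\in E'$ of $\xi$. Your route buys some conceptual transparency---the permutation argument and the separate $\mathbb D_{\leqslant k}\to\mathbb D$ lift disappear, and the set where the contraction ``really'' happens is named explicitly---at the cost of having to verify the continuity of $T_k$ on $E'$, which you only sketch and which carries the burden the paper puts on its merging/permutation manipulations. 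Two small imprecisions worth fixing if you write it up: in the lower bound, what continuity of $T_k|_{E'}$ gives you is a Euclidean open $V$ with $T_k(V\cap E')\subseteq G$, not $T_k(V)\subseteq G$ ($E'$ is dense but not open); this is still enough because $(Q_n^\gets(\Gamma_i)/n,U_i)_i\in E'$ a.s.\ reduces $\P(\cdot\in V\cap E')$ to $\P(\cdot\in V)$. And in the upper bound you should note that only the sorted points $(x,u)$ contribute to $\inf_{\overline{T_k^{-1}(F)\cap E'}}\hat I_k$, so the identity $\hat I_k(x,u)=\sum_i x_i^\alpha$ you invoke at the limit is legitimate.
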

\begin{proof}
First, we note that $I_k$ is indeed a rate function since the sublevel sets of $I_k$ equal the intersection between the sublevel sets of $I$ and a closed set $\D_{\leqslant k}$, and $I$ is a rate function (Lemma~\ref{theorem:lower-semi-continuity}).

Next, we prove the LDP in $\mathbb D_{\leqslant k}$ w.r.t.\ the relative topology induced by $\mathcal T_{J_1}$.
(Note that $I_k$ is a rate function in $\D_{\leqslant k}$ as well.)
Set $T_k(x,u) \triangleq \sum_{i=1}^k x_i \I_{[u_i,1]}$.
Since
$$
\inf_{(x,u)\in T_k^{-1}(\xi)} \hat I_k(x,u) = I_k(\xi)
$$
for $\xi\in \mathbb D_{\leqslant k}$,
the LDP in $\mathbb D_{\leqslant k}$ is established once we show that
for any closed set $F\subseteq \mathbb D_{\leqslant k}$,
\begin{equation}\label{ineq:sample-path-ldp-J-upper-bound}
\limsup_{n\to\infty} \frac{1}{L(n)n^\alpha}\log
\P \left( \hat{J}_n^{\leqslant k} \in F \right)
\leq
-\inf_{(x,u)\in T_k^{-1}(F)}\hat I_k(x,u),
\end{equation}
and for any open set $G\subseteq \mathbb D_{\leqslant k}$,
\begin{equation}\label{ineq:sample-path-ldp-J-lower-bound}
-\inf_{(x,u)\in T_k^{-1}(G)}\hat I_k(x,u)
\leq
\liminf_{n\to\infty} \frac{1}{L(n)n^\alpha}\log
\P \left( \hat{J}_n^{\leqslant k} \in G \right).
\end{equation}
We start with the upper bound. Note that
\begin{align*}
&
\limsup_{n\to\infty} \frac{1}{L(n)n^\alpha}\log
\P \left( \hat{J}_n^{\leqslant k} \in F \right)
\\
&
=
\limsup_{n\to\infty} \frac{1}{L(n)n^\alpha}\log
\P \Big( \big(Q_n^\gets(\Gamma_1),\ldots,Q_n^\gets(\Gamma_k), U_1,\ldots,U_k\big) \in T_{k}^{-1} \left( F \right) \Big)
\\
&
\leq
\limsup_{n\to\infty} \frac{1}{L(n)n^\alpha}\log
\P \left( \big(Q_n^\gets(\Gamma_1),\ldots,Q_n^\gets(\Gamma_k), U_1,\ldots,U_k\big) \in T_{k}^{-1} \left( F \right)^-\,\right)
\\
&\leq
-\inf_{(x_1,\ldots, x_k,u_1,\ldots, u_k)\in   T_{k}^{-1} \left( F \right)^-} \hat I_k(x_1,\ldots,x_k,u_1,\ldots,u_k).
\end{align*}
In view of \eqref{ineq:sample-path-ldp-J-upper-bound}, it is therefore enough for the upper bound to show that
\begin{equation}\nonumber
\inf_{(x,u)\in T_{k}^{-1} \left(F \right)}\hat I_k(x,u)\leq \inf_{(x,u)\in T_{k}^{-1} \left(F \right)^-}\hat I_k(x,u).
\end{equation}
To prove this, we proceed with proof by contradiction.
Suppose that
\begin{equation}\label{ineq:inf-equal-inf-over-closer-contradiction}
c \triangleq \inf_{(x,u)\in T_{k}^{-1} \left(F \right)} \hat I_k(x,u) > \inf_{(x,u)\in T_{k}^{-1} \left(F \right)^-}\hat I_k(x,u).
\end{equation}
Pick an $\epsilon>0$ in such a way that  $\inf_{(x,u) \in T^{-1}_k(F)^-}\hat I_k(x,u)< c-2\epsilon $.
Then there exists $(x^*,u^*)\in T_k^{-1}(F)^-$
%=(x_1^*,\dots,x_k^*,u_1^*,\dots,u_k^*) \in T^{-1}_k(F)^-
 such that $\hat I_k(x^*,u^*) < c-2\epsilon$.
Let $\bar I_k(x_1,\ldots,x_k,u_1,\ldots,u_k) \triangleq \sum_{i=1}^k x_i^\alpha$.
Since $\bar I_k$ is continuous, one can find $(x',u') = (x_1',\ldots,x_k', u_1',\ldots,u_k')\in T_k^{-1}(F)$ sufficiently close to $(x^*,u^*)$ so that $\bar I_k(x',u') < c-\epsilon$.
Note that for any permutation $p:\{1,\ldots, k\}\to\{1,\ldots,k\}$, $(x'',u'')\triangleq (x_{p(1)}',\ldots, x_{p(k)}',u_{p(1)}',\ldots,u_{p(k)}')$ also belongs to $T_k^{-1}(F)$ and $\bar I_k(x'',u'') = \bar I_k(x',u')$ due to the symmetric structure of $T_k$ and $\bar I_k$.
%Therefore, again w.l.o.g.\ we assume that $x_1'\geq \cdots \geq x_k'$.
If we pick $p$ so that $x'_{p(1)}\geq \cdots\geq x'_{p(k)}$, then $\hat I_k(x'',u'') = \bar I_k(x',u') < c-\epsilon \leq \inf_{(x,u)\in T_k^{-1}(F)}\hat I_k(x,u)$, which contradicts to $(x'',u'')\in T_k^{-1}(F)$.
Therefore, \eqref{ineq:inf-equal-inf-over-closer-contradiction} cannot be the case, which proves the upper bound.

Turning to the lower bound, consider an open set $G\subseteq \mathbb D_{\leqslant k}$.
\begin{align*}
&
\liminf_{n\to\infty} \frac{1}{L(n)n^\alpha}\log
\P \left( \hat{J}_n^{\leqslant k} \in G \right)
\\
&
=
\liminf_{n\to\infty} \frac{1}{L(n)n^\alpha}\log
\P \Big( \big(Q_n^\gets(\Gamma_1),\ldots,Q_n^\gets(\Gamma_k), U_1,\ldots,U_k\big) \in T_{k}^{-1} \left( G \right) \Big)
\\
&
\geq
\liminf_{n\to\infty} \frac{1}{L(n)n^\alpha}\log
\P \left( \big(Q_n^\gets(\Gamma_1),\ldots,Q_n^\gets(\Gamma_k), U_1,\ldots,U_k\big) \in  T_{k}^{-1} \left( G \right)^\circ\,\right)
\\
&\geq
-\inf_{(x_1,\ldots, x_k,u_1,\ldots, u_k)\in   T_{k}^{-1} \left( G \right)^\circ}\hat I_k(x_1,\ldots,x_k,u_1,\ldots,u_k).
\end{align*}
In view of \eqref{ineq:sample-path-ldp-J-lower-bound}, we are done if we prove that
\begin{equation}\label{ineq:sample-path-ldp-J-inf-over-interior-leq-inf}
\inf_{(x,u) \in T_k^{-1}(G)^{\circ}}\hat I_k(x,u)\leq \inf_{(x,u) \in T_k^{-1}(G)}\hat I_k(x,u).
\end{equation}
Let $(x,u)$ be an arbitrary point in $T_k^{-1}(G)$ so that $T_k(x,u) \in G$.
We will show that there exists $(x^*, u^*)\in T_k^{-1}(G)^\circ$ such that $I_k(x^*,u^*) \leq I_k(x,u)$.
Note first that if $u_i\in \{0,1\}$ for some $i$, then $x_i$ has to be 0 since $G \subseteq \mathbb D_{\leqslant k}$.
This means that we can replace $u_i$ with an arbitrary number in $(0,1)$ without changing the value of $I_k$ and $T_k$.
Therefore, we assume w.l.o.g.\ that $u_i>0$ for each $i=1,\ldots,k$.
Now, suppose that $u_i = u_j$ for some $i\neq j$.
Then one can find $(x', u')$ such that $T_k(x',u')=T_k(x,u)$ by setting
$$
(x',u')
\triangleq
(
x_1,
\ldots,
\underbrace{x_i+x_j}_{\mathclap{i\text{\textsuperscript{th} coordinate}\qquad}},
\ldots,
\underbrace{0_{\color{white}j}}_{\mathclap{\quad j^\text{th}\text{ coordinate}}},
\ldots,
x_k,
u_1,
\ldots,
\underbrace{u_{i\color{white}j}}_{\mathclap{ k+i\text{\textsuperscript{th} coordinate}\qquad}},
\ldots,
\underbrace{u_j'}_{\mathrlap{k+j^\text{th} \text{coordinate}}\qquad},
\ldots,
u_k
),
$$
where $u_j'$ is an arbitrary number in $(0,1)$; in particular, we can choose $u_j'$ so that $u_j'\neq u_l$ for $l=1,\ldots,k$.
It is easy to see that $\bar I_k(x',u') \leq \hat I_k(x,u)$.
Now one can permute the coordinates of $(x',u')$ as in the upper bound to find $(x'', u'')$ such that $T_k(x'',u'') = T_k(x,u)$ and $\hat I_k(x'',u'') \leq \hat I_k(x,u)$.
Iterating this procedure until there is no $i\neq j$ for which $u_i=u_j$, we can find $(x^*, u^*)$ such that $T_k(x^*, u^*) = T_k(x,u)$, $u_i^*$'s are all distinct in $(0,1)$, and $ I_k(x^*,u^*)\leq I_k(x,u)$.
Note that since $T_k$ is continuous at $(x^*, u^*)$, $T_k(x^*,u^*)\in G$, and $G$ is open, we conclude that $(x^*, u^*)\in T_k^{-1}(G)^\circ$.
Therefore,
$$\inf_{(x,u)\in T_k^{-1}(G)^\circ} I_k(x,u) \leq I_k(x,u).$$
Since $(x,u)$ was arbitrarily chosen in $T_k^{-1}(G)$, \eqref{ineq:sample-path-ldp-J-inf-over-interior-leq-inf} is proved.
Along with the upper bound, this proves the LDP in $\mathbb D_{\leqslant k}$.
Finally, since $\mathbb D_{\leqslant k}$ is a closed subset of $\mathbb D$, $\P(\hat J_n^{\leqslant k}\notin \mathbb D_{\leqslant k}) = 0$, and $I_k = \infty$ on $\mathbb D\setminus \mathbb D_{\leqslant k}$, Lemma 4.1.5 of \cite{dembo2010large} applies, proving the desired LDP in $\mathbb D$.
\end{proof}

Now we are ready to prove Lemma~\ref{lemma:equivalence-J}.

\begin{proof}[Proof of Lemma~\ref{lemma:equivalence-J}]
Recall that
\begin{align*}
\bar J_n^k
\stackrel{\mathcal D}{=}
\underbrace{\frac{1}{n} \sum_{i=1}^{k} Q_n^\gets(\Gamma_i)\I_{[U_i,1]}}_{= \hat{J}_n^{\leqslant k}}-\underbrace{\frac{1}{n}\I\{\tilde N_n<k\}\sum_{i=\tilde N_n+1}^{k}Q_n^\gets(\Gamma_i)\I_{[U_i,1]}}_{= \check{J}_n^{\leqslant k}}.
\end{align*}
Let $F$ be a closed set and note that
\begin{align*}
\P( \bar{J}_n^k \in F )
&
=
\P( \hat{J}_n^{\leqslant k}-\check{J}_n^{\leqslant k} \in F )
\leq
\P\big(\hat{J}_n^{\leqslant k}-\check J_n^{\leqslant k}\in F , \I\{N(n) < k\}=0\big)+\P\big(\I\{N(n)<k\} \neq 0\big)
\\
&
\leq \P( \hat J_n^{\leqslant k} \in F)+\P(N(n)<k).
%\leq
%\P(\hat{J}_n^{\leqslant k}\in F)+\sum_{j=0}^{k-1}e^{-\lambda n t}\frac{(\lambda n t)^j}{j!}.
\end{align*}
%Since $\limsup_{n\rightarrow \infty}\frac{1}{L(n)n^\alpha}\log\left(\sum_{j=0}^{k-1}e^{-\lambda n}\frac{(\lambda n)^j}{j!}\right)= -\infty$,
From Lemma~\ref{lemma:sample-path-ldp-for-J-hat},
\begin{align*}
\limsup_{n \rightarrow \infty}\frac{\log \P(\bar{J}_n^k\in F)}{L(n)n^\alpha}
&
\leq
\limsup_{n \rightarrow \infty}\frac{\log \P(\hat {J}_n^{\leqslant k}\in F)}{L(n)n^\alpha}
\vee
\limsup_{n \rightarrow \infty}\frac{\log \P(N(n) < k)}{L(n)n^\alpha}
\\
&
\leq
-\inf_{\xi \in F}I_k(\xi),
\end{align*}
since $\limsup_{n\rightarrow \infty}\frac{1}{L(n)n^\alpha}\log\P(N(n)<k)= -\infty$.

Turning to the lower bound, let $G$ be an open set.
Since the lower bound is trivial in case $\inf_{x\in G}I_k(x)=\infty$, we focus on the case $\inf_{x\in G}I_k(x)<\infty$.
In this case,
\begin{align*}
\liminf_{n\rightarrow \infty}\frac{\log\P(\bar{J}_n^k \in G) }{L(n)n^\alpha}
&\geq
\liminf_{n\rightarrow \infty}\frac{\log\P(\bar{J}_n^k \in G,\,N(n) \geq k)}{L(n)n^\alpha}
=
\liminf_{n\rightarrow \infty}\frac{\log\P(\hat{J}_n^{\leqslant k} \in G,\,N(n)\geq k)}{L(n)n^\alpha}
\\
&
\geq
\liminf_{n\rightarrow  \infty}\frac{1}{L(n)n^\alpha}\log\left(\P(\hat{J}_n^{\leqslant k} \in G)-\P(N(n)< k)\right)
\\
&=
\liminf_{n\rightarrow \infty}\frac{1}{L(n)n^\alpha}\log\left(\P(\hat{J}_n^{\leqslant k} \in G)\left(1-\frac{\P(N(n)< k)}{\P(\hat{J}_n^{\leqslant k}\in G)}\right)\right)
\\
&=
\liminf_{n\rightarrow \infty}\frac{1}{L(n)n^\alpha}\left\{\log\left(\P(\hat{J}_n^{\leqslant k} \in G)\right)
+\log\left(1-\frac{\P(N(n) < k)}{\P(\hat{J}_n^{\leqslant k}\in G)}\right)\right\}
\\
&
=
\liminf_{n\rightarrow \infty}\frac{1}{L(n)n^\alpha}\log\P(\hat{J}_n^{\leqslant k} \in G)\geq -\inf_{\xi \in G}I_k(\xi).
\end{align*}
The last equality holds since
\begin{equation}\label{eq:lowerbound-because-ratio-zero}
\lim_{n\to\infty}\frac{\P(N(n) < k)}{\P(\hat{J}_n^{\leqslant k} \in G)}
=
\lim_{n\to\infty}\left\{\text{exp}\left(\frac{\log \P(N(n) < k)}{L(n)n^\alpha}-\frac{\log\P(\hat{J}_n^{\leqslant k}\in G)}{L(n)n^\alpha}\right)\right\}^{L(n)n^\alpha} = 0,
\end{equation}
which in turn follows from
$$
\limsup_{n\to\infty}\frac{1}{L(n)n^\alpha}\log \P(N(n)< k) = -\infty
$$
%\qquad \text{ 
and
% }\qquad
$$
\limsup_{n\to\infty}\frac{-1}{L(n)n^\alpha}\log\P(\hat{J}_n^{\leqslant k} \in G) \leq \inf_{x\in G}I_k(x)<\infty.
$$

\end{proof}

%%%%%%%%%%%%%%%%%%%%%%%%%%%%%%%%%%%%%%%%%%%%%%%%%%%%%%%%%
%
% Subsection Proof of Lemma 2.2
%
%%%%%%%%%%%%%%%%%%%%%%%%%%%%%%%%%%%%%%%%%%%%%%%%%%%%%%%%%
\subsection{Proof of Lemma~\ref{lemma:Jk-approximates-J}}
\begin{proof}[Proof of Lemma~\ref{lemma:Jk-approximates-J}]
Since the inequality is obvious if $\inf_{\xi \in G} I(\xi) = \infty$, we assume that $\inf_{\xi \in G} I(\xi)  < \infty$.
Then, there exists a $\xi_0 \in G$ such that $I(\xi_0) \le \inf_{\xi \in G}I(\xi)+\delta$.
Since $G$ is open, we can pick $\epsilon >0 $ such that  $B_{J_1}(\xi_0;2\epsilon) \subseteq G$ so that $B_{J_1}(\xi_0; \epsilon) \subseteq G^{-\epsilon}$.
Note that since $I(\xi_0) < \infty$, $\xi_0$ has the representation
$\xi_0 = \sum_{i=1}^\infty x_i\I_{[u_i,1]}$
where $x_i\geq 0$ for all $i=1,2,\ldots$, and $u_i$'s all distinct in $(0,1)$.
Note also that since $I(\xi_0) = \sum_{i=1}^\infty x_i^\alpha < \infty$ with $\alpha<1$, $\sum_{i=1}^\infty x_i$ has to be finite as well.
There exists $K$ such that $k\geq K$ implies $\sum_{i=k+1}^\infty x_i <\epsilon$.
For these $\epsilon$ and $K$, we claim that \eqref{inf-G-epsilon-delta} holds.
For any given $k\geq K$, let $\xi_1 \triangleq \sum_{i=1}^k x_i \I_{[u_i,1]}$, then $I_k(\xi_1) \leq I(\xi_0)$ while $d_{J_1}(\xi_0, \xi_1) \leq \|\xi_0 - \xi_1\|_\infty \leq \sum_{i=k+1}^\infty x_i < \epsilon$.
That is, $\xi_1 \in B_{J_1}(\xi_0; \epsilon) \subseteq G^{-\epsilon}$.
Therefore, $\inf_{\xi\in G^{-\epsilon}}I_k(\xi) \leq I(\xi_1) \leq I(\xi_0) \leq \inf_{\xi\in G} I(\xi) + \delta$.
\end{proof}

%%%%%%%%%%%%%%%%%%%%%%%%%%%%%%%%%%%%%%%%%%%%%%%%%%%%%%%%%
%
% Subsection Proof of Lemma 2.3
%
%%%%%%%%%%%%%%%%%%%%%%%%%%%%%%%%%%%%%%%%%%%%%%%%%%%%%%%%%
\subsection{Proof of Lemma~\ref{lemma:negligence-K}}
In our proof of Lemma~\ref{lemma:negligence-K}, the following lemmas (Lemma~\ref{lemma:negligence-positive} and Lemma~\ref{lemma:negligence-negative}) play key roles.
\begin{lemma}\label{lemma:negligence-positive}
For each $\epsilon>\delta>0$,
\begin{align}\label{ineq:concentration}
&\limsup_{n\to\infty}\frac1{L(n)n^\alpha}\log  \P\left(\max_{1\leq j \leq 2n}\sum_{i=1}^{j}\big(Z_i\mathbbm{1}_{\{Z_i \le n\delta \}}-\E Z\big)> n\epsilon\right)
\le -(\epsilon/3)^\alpha(\epsilon/\delta)^{1-\alpha}.
\end{align}
\end{lemma}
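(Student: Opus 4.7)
The plan is to combine Etemadi's maximal inequality with a Chernoff bound tailored to the truncation level $n\delta$, in the spirit of the concentration estimate of \cite{jelenkovic2003large} mentioned in the introduction.

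First, I would apply Etemadi's maximal inequality,
$\P(\max_{j\leq 2n}S_j > n\epsilon)\leq 3\max_{j\leq 2n}\P(|S_j|>n\epsilon/3)$, which accounts for the $(\epsilon/3)^\alpha$ factor in the target bound. The lower-tail part $\P(-S_j > n\epsilon/3)$ is sub-Gaussian in $n$, since each $-X_i = \E Z - Z_i\I\{Z_i\leq n\delta\}$ is bounded above by $\E Z$ and has bounded variance. A Bernstein-type argument makes its contribution to the limsup at scale $L(n)n^\alpha$ equal to $-\infty$, and it can be discarded.

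Second, I would apply the Chernoff bound to the upper deviation $\P(S_j > n\epsilon/3)$:
$$\P(S_j>n\epsilon/3) \leq \exp(-\theta n\epsilon/3 + j\psi(\theta)), \qquad \psi(\theta) = \log \E[e^{\theta(Z\I\{Z\leq n\delta\}-\E Z)}].$$
Using $\log(1+u)\leq u$ together with the identity
$$\E[e^{\theta Z\I\{Z\leq n\delta\}}] = 1 + \theta\int_0^{n\delta}e^{\theta z}\bar F_Z(z)\,dz - \bar F_Z(n\delta)(e^{\theta n\delta}-1)$$
and the centering identity $-\theta\E Z = -\theta\int_0^\infty \bar F_Z(z)\,dz$, one obtains
$$\psi(\theta)\leq \int_0^{n\delta}\theta(e^{\theta z}-1)\bar F_Z(z)\,dz.$$
The Weibull tail bound $\bar F_Z(z)\leq c\, e^{-L(z)z^\alpha}$ combined with the assumption that $L(x)x^{\alpha-1}$ is non-increasing yields $L(z)z^\alpha \geq L(n\delta)(n\delta)^{\alpha-1}z$ on $[0,n\delta]$, which lets one dominate the integrand by an exponentially decaying function once $\theta$ is chosen below $L(n\delta)(n\delta)^{\alpha-1}$.

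Third, I would pick $\theta_n$ so that $\theta_n n\epsilon/3$ matches the target rate $(\epsilon/3)^\alpha (\epsilon/\delta)^{1-\beta}L(n)n^\alpha$; this gives $\theta_n$ of order $L(n)n^{\alpha-1}$ with constants depending on $\epsilon,\delta,\beta$. One then verifies that $2n\psi(\theta_n) = o(\theta_n n\epsilon/3)$ at the $L(n)n^\alpha$ scale, takes logarithms, divides by $L(n)n^\alpha$, and passes to the $\limsup$.

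The main obstacle is the tightness of the MGF estimate together with the choice of $\theta_n$: a naive bound such as $\psi(\theta)\sim c\theta$ would produce $2n\psi(\theta_n)$ comparable to the leading term $\theta_n n\epsilon/3$, collapsing the estimate. The hypothesis $\beta>\alpha$, combined with the monotonicity of $L(x)x^{\alpha-1}$, is precisely what allows the integral in the MGF bound to be controlled by a strictly sub-leading quantity, so that $j\psi(\theta_n)$ is negligible on the scale $L(n)n^\alpha$ and Chernoff delivers the claimed rate $-(\epsilon/3)^\alpha(\epsilon/\delta)^{1-\beta}$.
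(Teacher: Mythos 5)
Your strategy --- Etemadi's maximal inequality together with a Chernoff bound at scale $\theta_n \asymp L(n)n^{\alpha-1}$ --- is the one the paper uses, and the integration-by-parts identity and centering for the truncated MGF are correctly recorded. The genuine gap is the step where you replace $q(z)\triangleq L(z)z^\alpha$ by its tangent line $c_n z$ with $c_n\triangleq L(n\delta)(n\delta)^{\alpha-1}$. That line is tight only at $z=n\delta$ and lies far below the concave $q$ elsewhere on $[0,n\delta]$. Since $c_n$ and $\theta_n$ both tend to $0$ and are of the same order $L(n)n^{\alpha-1}$, you get $c_n-\theta_n\asymp L(n)n^{\alpha-1}\to 0$, and your dominated integral evaluates to $\psi(\theta_n)\le\theta_n\int_0^{n\delta}(e^{\theta_n z}-1)e^{-c_n z}\,dz \to \theta_n/(c_n-\theta_n)$, a strictly positive constant. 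Thus $2n\psi(\theta_n)=\Theta(n)$, which is of strictly larger order than the leading term $\theta_n n\epsilon\asymp L(n)n^\alpha$ because $\alpha<1$, so the Chernoff exponent normalised by $L(n)n^\alpha$ tends to $+\infty$ and the estimate is vacuous. The hypothesis $\beta>\alpha$ governs only the comparison of $q(n\epsilon)$ with $q(n\delta)$ and does not repair the linearisation.

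The paper avoids this by keeping the exact $q$ where it matters. It splits the MGF integral at $y=1/s$, not merely at $n\delta$: on $[0,1/s)$ it uses $e^{sy}\le 1+sy+(sy)^2$ to extract an $O(s^2)$ cumulant that stays negligible after multiplying by $2n$; on $[1/s,n\delta]$ it bounds $s\int e^{sy-q(y)}\,dy$ by $sn\delta\big(e^{sn\delta-q(n\delta)}+e^{1-q(1/s)}\big)$, using the monotonicity of $L(y)y^{\alpha-1}$ to control the supremum of $e^{sy-q(y)}$ by its endpoint values, both of whose exponents diverge to $-\infty$ at least polynomially in $n$ and overwhelm the polynomial prefactor $sn\delta$. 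To repair your version, split at $1/\theta_n$ and estimate the upper piece with $e^{-q(z)}$ directly via an endpoint argument, not with $e^{-c_n z}$.
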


\begin{proof}
We refine an argument developed in \cite{jelenkovic2003large}.
Note that for any $s>0$ such that $1/s \leq n\delta$,
\begin{equation}\label{eq:decomposition-of-truncated-exponential-moment}
\mathbf{E}e^{sZ \mathbbm{1}_{\{Z \le n\delta \}}}
= \mathbf{E}e^{sZ \mathbbm{1}_{\{Z \le n\delta \}}}\mathbbm{1}_{\{Z \geq \frac 1 s \}}
+\mathbf{E}e^{sZ \mathbbm{1}_{\{Z \le n\delta \}}}\mathbbm{1}_{\{Z < \frac 1 s \}}
=(I) + (II),
\end{equation}
and\begin{align}
(I)
&=
\int_{[1/s,n\delta]}e^{sy} \d\P( Z\leq y) + \int_{(n\delta,\infty)} \d \P(Z \leq y)
\nonumber
\\
&=
\Big[e^{sy}\P(Z\leq y)\Big]^{(n\delta)+}_{(1/s)-} - s\int_{[1/s,n\delta]} e^{sy} \P(Z\leq y) \d y + \P(Z > n\delta)
\nonumber
\\
&=
e^{sn\delta}\P(Z\leq n\delta)-e\P(Z< 1/s) -  s\int_{[1/s,n\delta]} e^{sy}  \d y + s\int_{[1/s,n\delta]} e^{sy} \P(Z> y) \d y + \P(Z > n\delta)
\nonumber
\\
&=
e^{sn\delta}\P(Z\leq n\delta)-e\P(Z< 1/s) -  e^{sn\delta} + e + s\int_{[1/s,n\delta]} e^{sy} \P(Z> y) \d y + \P(Z > n\delta)
\nonumber
\\
&=
-e^{sn\delta}\P(Z> n\delta)+e\P(Z\geq 1/s) + s\int_{[1/s,n\delta]} e^{sy} \P(Z> y) \d y + \P(Z > n\delta)
\nonumber
\\
&\leq
s\int_{[1/s,n\delta]} e^{sy} \P(Z> y) \d y+ e\P(Z\geq 1/s) + \P(Z > n\delta)
\nonumber
\\
&\leq
s\int_{[1/s,n\delta]} e^{sy} \P(Z> y) \d y+ s^2(e+1) \E Z^2,
\label{ineq:I}
\end{align}
where the last inequality is from $\P(Z\geq n\delta) \leq \P(Z\geq 1/s) \leq s^2 \,\E Z^2$;
while
\begin{equation}\label{ineq:II}
(II)
\leq \int_0^{1/s} e^{sy} \d \P(Z \leq y)
\leq \int_0^{1/s} (1+sy+(sy)^2) \d \P(Z \leq y)
\leq 1+s\,\mathbf{E}Z +s^2\,\mathbf{E} Z^2.
\end{equation}
Therefore, from \eqref{eq:decomposition-of-truncated-exponential-moment}, \eqref{ineq:I} and \eqref{ineq:II}, if $1/s \leq n\delta$ and $s$ is sufficiently small,
\begin{align}\label{eq:trunc1}
\mathbf{E} e^{sZ \mathbbm{1}_{\{Z \le n\delta \}}}
&
\le s\int_{\frac 1 s }^{n\delta}e^{sy}\P\left(Z>y\right)dy
+1+s\E Z+s^2(e+2)\E Z^2 \nonumber \\
&
= s\int_{\frac 1 s }^{n\delta}e^{sy-q(y)}dy
+1+s\mathbf{E}Z +s^2(e+2)\E Z^2\nonumber \\
&
\le sn\delta e^{sn\delta-q(n\delta)}+1+s\mathbf{E} Z +s^2(e+2)\mathbf{E}Z^2,
\end{align}
where $q(x) \triangleq -\log \P(X > x) = L(x)x^\alpha$, and the last inequality is from the fact that $e^{sy-q(y)}$ is increasing over $[1/s,n\delta]$ due to the assumption that $L(y)y^{\alpha-1}$ is non-increasing for sufficiently large $y$'s.
%\what{[in fact, a tighter bound $sn\delta e^{sn\delta-q(n\delta)}+1+s\mathbf{E} Z +s^2(e+2)\mathbf{E}Z^2$ is more appropriate, and with this bound, the subsequent argument gets further simplified since we can do without \eqref{eq:tconc1}; however we leave this as such in case the current argument is useful for further generalization]}
Now, from the Markov inequality,
\begin{align}
&
\mathbf{P}\left(\sum_{i=1}^{j}\big(Z_i\mathbbm{1}_{\{Z_i \le n\delta \}}-\E Z\big) > n\epsilon\right)
\nonumber
\\
&
\le
\mathbf{P}\left(\exp\left(s\sum_{i=1}^{j}Z_i\mathbbm{1}_{\{Z_i \le n\delta \}}\right)>\exp\big(s(n\epsilon + j\E Z)\big)\right)
\nonumber
\\
&
\le
\exp\left\{-s\big(n\epsilon+j\mathbf{E}Z\big)+j\log\left(\E e^{s Z\mathbbm 1_{\{Z\leq n\delta\}}}\right)\right\}\nonumber
\\
&
\le
\exp\left\{-s(n\epsilon+j\mathbf{E}Z)+j\left(
sn\delta e^{sn\delta-q(n\delta)}+s\mathbf{E} Z +s^2(e+2)\mathbf{E}Z^2
 \right)\right\}\nonumber\\
&
=
\exp\left\{-sn\epsilon+jsn\delta e^{sn\delta-q(n\delta)}+js^2(e+2)\mathbf{E}Z^2
\right\}
\nonumber\\
&
\leq \exp\left\{-sn\epsilon+2n^2 s\delta e^{sn\delta-q(n\delta)}+2ns^2(e+2)\mathbf{E}Z^2
\right\}\label{eq:auxnot}
\end{align}
for $j \leq 2n$, where the third inequality is from \eqref{eq:trunc1} and the generic inequality $\log (x+1) \le x$.
Fix $\gamma\in (0, (\epsilon/\delta)^{1-\alpha})$ and and set $s =\frac{\gamma q(n\epsilon)}{n\epsilon}$.
Note that $1/s\to \infty$ as $n\to \infty$, while $1/s \leq n\delta$ for sufficiently large $n$.
From now on, we only consider sufficiently large $n$'s such that $1/s < n\delta$ and $s$ is sufficiently small so that \eqref{eq:trunc1} and \eqref{eq:auxnot} are valid.
To establish an upper bound for \eqref{eq:auxnot}, we next examine $e^{sn\delta-q(n\delta)}$.
Note that 
%since $L(y)y^{\alpha-1}$ is non-increasing for sufficiently large $n$'s, we have that 
$q(n\epsilon) = q(n\delta) \frac{L(n\epsilon)}{L(n\delta)}(\delta/\epsilon)^{-\alpha}$,
and hence,
\begin{equation*}
sn\delta-q(n\delta)
=
\frac{\gamma q(n\epsilon)}{n\epsilon} n\delta-q(n\delta)
=
-q(n\delta)\left(1-\gamma\frac{L(n\epsilon)}{L(n\delta)}(\delta/\epsilon)^{1-\alpha} \right),
\end{equation*}
and
\begin{equation}\label{eq:tconc1}
e^{sn\delta-q(n\delta)} \leq e^{-q(n\delta)\left(1-\gamma\frac{L(n\epsilon)}{L(n\delta)}(\delta/\epsilon)^{1-\alpha}\right)}.
\end{equation}
Plugging this $s \big(=\frac{\gamma q(n\epsilon)}{n\epsilon}\big)$ into \eqref{eq:auxnot} along with \eqref{eq:tconc1},
\begin{align*}
&
\max_{0\leq j \leq 2n}\mathbf{P}\left(\sum_{i=1}^{j}\big(Z_i\mathbbm{1}_{\{Z_i \le n\delta \}}-\E Z\big) > n\epsilon\right)
\\
&
\leq
\exp\left\{
-\gamma q(n\epsilon) +
\frac{2 \gamma \delta  n q(n\epsilon)}{\epsilon} e^{-q(n\delta)\left(1-\gamma\frac{L(n\epsilon)}{L(n\delta)}(\delta/\epsilon)^{1-\alpha}\right)}
+ \frac{2\gamma^2 (e+2)\E Z^2 }{\epsilon^2} \frac{q(n\epsilon)^2}{n}
\right\}.
\end{align*}
Since
$$\limsup_{n\to\infty}\frac1{L(n)n^\alpha}\frac{2 \gamma \delta  n q(n\epsilon)}{\epsilon} e^{-q(n\delta)\left(1-\gamma\frac{L(n\epsilon)}{L(n\delta)}(\delta/\epsilon)^{1-\alpha}\right)} = 0,$$
and
$$\limsup_{n\to\infty} \frac1{L(n)n^\alpha}\frac{2\gamma^2 (e+2)\E Z^2 }{\epsilon^2} \frac{q(n\epsilon)^2}{n} = 0,$$
we conclude that
$$
\limsup_{n\to\infty} \frac1{L(n)n^\alpha} \log \max_{0\le j \le 2n}\mathbf{P}\left(\sum_{i=1}^{j}\big(Z_i\mathbbm{1}_{\{Z_i \le n\delta \}}-\E Z\big) > n\epsilon\right) = \limsup_{n\to\infty} \frac{-\gamma q(n\epsilon)}{L(n)n^\alpha} = -\epsilon^\alpha \gamma.
$$
From Etemadi's inequality,
\begin{align*}
&\limsup_{n\to\infty} \frac1{L(n)n^\alpha} \log
\mathbf{P}\left(\max_{0\leq j\leq 2n}\sum_{i=1}^{j}\big(Z_i\mathbbm{1}_{\{Z_i \le n\delta \}}-\E Z\big) > 3n\epsilon\right)
\\
&\leq
\limsup_{n\to\infty} \frac1{L(n)n^\alpha} \log \left\{
3\max_{0\leq j\leq 2n}\P\left(\sum_{i=1}^{j}\big(Z_i\mathbbm{1}_{\{Z_i \le n\delta \}}-\E Z\big) > n\epsilon\right) \right\}
=
-\epsilon^\alpha \gamma.
\end{align*}
Since this is true for arbitrary $\gamma$'s such that $\gamma\in (0, (\epsilon/\delta)^{1-\alpha})$, we arrive at the conclusion of the lemma.

\end{proof}

\begin{lemma}\label{lemma:negligence-negative}
For every $\epsilon, \, \delta >0 $,
\begin{equation*}
\limsup_{n\rightarrow \infty}\frac{1}{L(n)n^\alpha}\log\P\left(\sup_{1 \le j \le 2n}\sum_{i=1}^{j}\left(\mathbf{E}Z-{Z_i}{\mathbbm{1}_{\{Z_i\le n\delta\}}}\right)> n\epsilon\right)
=-\infty.
\end{equation*}
\end{lemma}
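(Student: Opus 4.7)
Set $W_i \triangleq \E Z - Z_i\mathbbm{1}_{\{Z_i\le n\delta\}}$ and $S_j \triangleq \sum_{i=1}^j W_i$. The two structural observations driving the proof are: (a) $W_i \le \E Z$ almost surely, since $Z_i\mathbbm{1}_{\{Z_i\le n\delta\}} \ge 0$, so $\E e^{sW_i} < \infty$ for every $s > 0$; and (b) $\E W_i = \E[Z\mathbbm{1}_{\{Z>n\delta\}}] \to 0$ as $n\to\infty$, because $Z$ has moments of every order under the Weibull tail assumption. Taken together, these suggest that a classical Cramér--Chernoff argument will deliver genuine \emph{exponential} decay in $n$, which trivially overwhelms the sub-linear speed $L(n)n^\alpha$ (since $n^{1-\alpha}/L(n)\to\infty$ for $\alpha\in(0,1)$).

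My plan has four steps. First, reduce the supremum to a maximum by a union bound (or Etemadi's inequality, as in Lemma~\ref{lemma:negligence-positive}), incurring only a polynomial-in-$n$ factor that is absorbed at the end. Second, apply Markov's inequality at a positive $s$ to obtain $\P(S_j > n\epsilon) \le e^{-sn\epsilon}(\E e^{sW_1})^j$. Third, bound the MGF by splitting on $\{Z \le n\delta\}$ and its complement, and on the former use the elementary inequality $e^{-x} \le 1 - x + x^2/2$ for $x\ge 0$:
\[
\E e^{sW_1} = e^{s\E Z}\bigl[\P(Z>n\delta) + \E\!\left[e^{-sZ}\mathbbm{1}_{\{Z\le n\delta\}}\right]\bigr]
\le e^{s\E Z}\bigl[1 - s(\E Z - \E W_1) + \tfrac{s^2}{2}\E Z^2\bigr]
\le \exp\!\bigl(s\E W_1 + \tfrac{s^2}{2}\E Z^2\bigr),
\]
using $1+x\le e^x$ at the end. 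Fourth, pick a fixed small $s = s(\epsilon)$ (e.g., $s = \epsilon/(4\E Z^2)$), and use $j\le 2n$ and $2\E W_1 < \epsilon/2$ (valid for all large $n$) to arrive at $\P(S_j > n\epsilon) \le \exp\bigl[-ns(\epsilon/2 - s\E Z^2)\bigr] \le e^{-cn}$ for a constant $c = c(\epsilon)>0$. The union bound then yields $\P(\sup_{1\le j\le 2n} S_j > n\epsilon)\le 2ne^{-cn}$, from which
\[
\limsup_{n\to\infty}\frac{\log\P(\sup_{1\le j\le 2n}S_j > n\epsilon)}{L(n)n^\alpha}
\;\le\; \limsup_{n\to\infty}\frac{\log(2n)-cn}{L(n)n^\alpha} \;=\; -\infty.
\]

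I do not anticipate a genuinely hard step; the argument is essentially a Bernstein-type concentration inequality, made available by the deterministic upper bound $W_i\le \E Z$. The only subtle point---and hence the piece I would be most careful about when writing out a formal proof---is to ensure that the $n$-dependence introduced by the truncation level $n\delta$ does not appear in the leading exponent. This is precisely why the split above is arranged so that the bulk Taylor bound feeds only the global second moment $\E Z^2$, while the $n\delta$-truncation enters exclusively through the vanishing mean $\E W_1$, whose decay is automatic from Weibull tails and finiteness of the moments of $Z$.
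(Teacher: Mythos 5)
Your argument is correct and rests on the same structural insight as the paper's proof: because the summands $W_i = \E Z - Z_i\mathbbm{1}_{\{Z_i\le n\delta\}}$ are bounded above by the constant $\E Z$ (independently of $n$) and have mean $\E W_i = \E[Z\mathbbm{1}_{\{Z>n\delta\}}]\to 0$, one gets true exponential-in-$n$ concentration, which trivially dominates the sublinear speed $L(n)n^\alpha$. The difference is how that concentration is extracted. The paper first re-centers by writing the event as $\{\sum(\E[Z\mathbbm{1}_{\{Z\le n\delta\}}]-Z_i\mathbbm{1}_{\{Z_i\le n\delta\}}) > n\epsilon/3\}$ (absorbing the small shift $\E[Z\mathbbm{1}_{\{Z>n\delta\}}]$ by taking $n$ large), and then invokes Bennett's inequality as a black box, followed by Etemadi's inequality for the running maximum. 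You instead bound the moment generating function directly via the elementary Taylor estimate $e^{-x}\le 1-x+x^2/2$, arriving at $\E e^{sW_1}\le\exp(s\E W_1+\tfrac{s^2}{2}\E Z^2)$, and then choose a fixed small $s$; the re-centering in the paper is absorbed here into the $s\E W_1$ term. Your version is a shade more self-contained (only Markov and a second-order MGF bound are used, rather than citing Bennett), at the cost of producing a somewhat weaker constant in the exponent---which is irrelevant since any constant-rate exponential decay gives $-\infty$. The one step worth making explicit when you write it up: for the union bound to yield $2n e^{-cn}$ with $c>0$ uniformly in $j\le 2n$, you need $n$ large enough that $2\E W_1 < \epsilon/2$, which is exactly the $n\ge n_0$ threshold the paper also invokes; this is automatic from $\E Z<\infty$ and dominated convergence, but should be stated.
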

\begin{proof}
Note first that there is $n_0$ such that $\mathbf{E}\left({Z_i}{\mathbbm{1}_{\{Z_i > n\delta\}}}\right) \le \frac{\epsilon}{3}$ for $n\geq n_0$.
For $n\geq n_0$ and $j \leq 2n$,
\begin{align*}
\mathbf{P}\left(\sum_{i=1}^{j}\left(\mathbf{E}{Z}-{Z_i}{\mathbbm{1}_{\{Z_i\le n\delta\}}}\right)>{n\epsilon}\right)
&
=
\mathbf{P}\left(\sum_{i=1}^{j}\left(\mathbf{E}{Z}\mathbbm{1}_{\{Z\le n\delta\}}-{Z_i}{\mathbbm{1}_{\{Z_i\le n\delta\}}}\right)>n\epsilon - j\E Z\mathbbm 1_{\{Z > n\delta\}}\right)
\\
&
\leq
\mathbf{P}\left(\sum_{i=1}^{j}\left(\mathbf{E}{Z}\mathbbm{1}_{\{Z\le n\delta\}}-{Z_i}{\mathbbm{1}_{\{Z_i\le n\delta\}}}\right)>n\epsilon - j\epsilon/3\right)
\\
&
\leq
\mathbf{P}\left(\sum_{i=1}^{j}\left(\mathbf{E}{Z}\mathbbm{1}_{\{Z\le n\delta\}}-{Z_i}{\mathbbm{1}_{\{Z_i\le n\delta\}}}\right)>\frac{n\epsilon}3\right).
\end{align*}
Let $Y_i^{(n)}\triangleq \mathbf{E}({Z_i}{\mathbbm{1}_{\{Z_i\le n\delta\}}})-{Z_i}{\mathbbm{1}_{\{Z_i\le n\delta\}}}$. Recall the definition of $Z$ in Section~\ref{subsection:extended-ldp-1-d-levy-processes} and note that it is bounded from below. Furthermore, $\E Y_i^{(n)} = 0$, $\var Y_i^{(n)}\leq \E Z^2$, and $Y_i^{(n)}\le \mathbf{E}Z$ almost surely.
From Bennet's inequality,
\begin{align}\label{eq:step2}
& \mathbf{P}\left( \sum_{i=1}^{j}\left(\mathbf{E}Z\mathbbm{1}_{\{Z_i\le n\delta\}}-{Z_i}{\mathbbm{1}_{\{Z_i\le n\delta\}}}\right)>\frac{n\epsilon}{3}\right)\nonumber
\\
&		
\le
\exp\left[-\frac{j\var Y^{(n)}}{(\E Z)^2}\left\{\left(1+\frac{n\epsilon\,\E Z}{3j\,\var Y^{(n)}}\right)\log \left( 1+\frac{n\epsilon\,\E Z}{3j\,\var Y^{(n)}} \right) -\left(\frac{n\epsilon\,\E Z}{3j\,\var Y^{(n)}}\right) \right\}\right]
\nonumber
\\
&		
\le
\exp\left[-\frac{j\var Y^{(n)}}{(\E Z)^2}\left\{\left(\frac{n\epsilon\,\E Z}{3j\,\var Y^{(n)}}\right)\log \left( 1+\frac{n\epsilon\,\E Z}{3j\,\var Y^{(n)}} \right) -\left(\frac{n\epsilon\,\E Z}{3j\,\var Y^{(n)}}\right) \right\}\right]
\nonumber
\\
&
\le
\exp\left[-\left\{\left(\frac{n\epsilon}{3\,\mathbf{E}Z}\right)\log \left( 1+\frac{n\epsilon\,\mathbf{E}Z}{3j\,\var Y^{(n)}} \right) -\left(\frac{n\epsilon}{3\,\mathbf{E}Z}\right) \right\}\right]
\nonumber
\\
&
\le
\exp\left[-n\left\{\left(\frac{\epsilon}{3\,\mathbf{E}Z}\right)\log \left( 1+\frac{\epsilon\,\mathbf{E}Z}{6\,\E Z^2} \right) -\left(\frac{\epsilon}{3\,\mathbf{E}Z}\right) \right\}\right]
\nonumber
\end{align}
for $j \leq 2 n$.
Therefore, for $n\geq n_0$ and $j\leq 2n$,
\begin{equation*}
\mathbf{P}\left(\sum_{i=1}^{j}\left(\mathbf{E}{Z}-{Z_i}{\mathbbm{1}_{\{Z_i\le n\delta\}}}\right)>{n\epsilon}\right)
\le
\exp\left[-n\left\{\left(\frac{\epsilon}{3\,\mathbf{E}Z}\right)\log \left( 1+\frac{\epsilon\,\mathbf{E}Z}{6\,\E Z^2} \right) -\left(\frac{\epsilon}{3\,\mathbf{E}Z}\right) \right\}\right].
\end{equation*}
Now, from Etemadi's inequality,
\begin{align*}
\limsup_{n\rightarrow \infty}&\frac{1}{L(n)n^\alpha}\log\P\left(\sup_{1 \le j \le 2n}\sum_{i=1}^{j}\left(\mathbf{E}Z-{Z_i}{\mathbbm{1}_{\{Z_i\le n\delta\}}}\right)> 3n\epsilon\right)
\\
&
\le
\limsup_{n\rightarrow \infty}\frac{1}{L(n)n^\alpha}\log \left\{3\max_{1\le j \le 2n}\P\left(\sum_{i=1}^{j}\left(\mathbf{E}Z-{Z_i}{\mathbbm{1}_{\{Z_i\le n\delta\}}}\right)> n\epsilon\right)
\right\}
\\
&
\le
\limsup_{n\rightarrow \infty}\frac{1}{L(n)n^\alpha}\log \left\{3
\exp\left[-n\left\{\left(\frac{\epsilon}{3\,\mathbf{E}Z}\right)\log \left( 1+\frac{\epsilon\,\mathbf{E}Z}{6\,\E Z^2} \right) -\left(\frac{\epsilon}{3\,\mathbf{E}Z}\right) \right\}\right]\right\}
=-\infty.
\end{align*}
Replacing $\epsilon$ with $\epsilon/3$, we arrive at the conclusion of the lemma.
\end{proof}

Now we are ready to prove Lemma~\ref{lemma:negligence-K}.
\begin{proof}[Proof of Lemma~\ref{lemma:negligence-K}]
\begin{align}\label{eq:bbbb}
&
\P\left(\|\bar H_n^k\|_\infty > \epsilon\right)
\nonumber
\\
&
\le
\P\left(\|\bar H_n^k\|_\infty > \epsilon, N(nt) \geq k\right)
+ \P\left(\|\bar H_n^k\|_\infty > \epsilon, N(nt) < k\right)
\nonumber
\\
&
\le
\P\left(\|\bar H_n^k\|_\infty > \epsilon,\, N(nt) \geq k, \, Z_{R_n^{-1}(k)} \leq n\delta \right)
\\
&\qquad\qquad\qquad\qquad
+\P\left(\|\bar H_n^k\|_\infty > \epsilon,\,N(nt) \geq k,\, Z_{R_n^{-1}(k)} > n\delta \right)
+ \P\left(N(nt) < k\right)
\nonumber
\\
&
\le
\P\left(\|\bar H_n^k\|_\infty > \epsilon, \,N(nt) \geq k,\, Z_{R_n^{-1}(k)} \leq n\delta \right)
\\
&\qquad\qquad\qquad\qquad
+\P\left(N(nt) \geq k,\,Z_{R_n^{-1}(k)} > n\delta \right)
+ \P\left(N(nt) < k\right)
.\nonumber
\end{align}
An explicit upper bound for the second term can be obtained:
\begin{align*}
\P\left(N(nt) \geq k,\,Z_{R_n^{-1}(k)} > n\delta \right)
&
\leq \mathbf{P} \left( Q_n^\gets(\Gamma_k) > n\delta\right) \leq {\mathbf{P} \left( Q_n^\gets(\Gamma_k) \geq n\delta\right)}\nonumber
= \mathbf{P}\left(\Gamma_k\le Q_n(n\delta)\right)
\\
&
= \int_{0}^{Q_n(n\delta)}\frac{1}{k!}t^{k-1}e^{-t}dt
= \int_{0}^{nv[n\delta,\infty)}\frac{1}{k!}t^{k-1}e^{-t}dt\nonumber
\leq \int_{0}^{nv(n\delta,\infty)}t^{k-1}dt
\\
&
= \frac{1}{k}n^{k}e^{-kL(n\delta)n^\alpha\delta^\alpha}.
\end{align*}
Therefore,
\begin{equation}\label{eq:log-asymptotics-for-QnGammak}
\limsup_{n\to\infty} \frac1{L(n)n^\alpha}\log \P \left( Q_n^\gets(\Gamma_k) \ { \geq } \ n\delta\right)
\leq
-k\delta^\alpha.
\end{equation}
Turning to the first term of \eqref{eq:bbbb}, we consider the following decomposition:
\begin{align*}
&\mathbf{P}\left(\|\bar H_n^k\|_\infty > \epsilon,\,N(nt) \geq k,\,Z_{R_n^{-1}(k)} \le n\delta\right)
\\
&
=
\underbrace{\P\left(N(nt) \geq k,\,Z_{R_n^{-1}(k)} \le n\delta,\sup_{t\in[0,1]}\bar H_n^k(t) > \epsilon\right)}_{\triangleq \text{(i)}}
+
\underbrace{\P\left(N(nt) \geq k,\,Z_{R_n^{-1}(k)} \le n\delta,\sup_{t\in[0,1]}-\bar H_n^k(t) > \epsilon\right)}_{\triangleq \text{(ii)}}.
\end{align*}
Since $Z_{R_n^{-1}(k)}\leq n\delta$ implies $\one{R_n(i)>k} \leq \one{Z_i \leq n\delta}$,
\begin{align*}
\text{(i)}&
\leq
\P\left( \sup_{t\in[0,1]} \sum_{i=1}^{N(nt)} \big(Z_i \one{R_n(i) >k} - \E Z\big) > n\epsilon,\,N(nt) \geq k,\,Z_{R_n^{-1}(k)} \leq n\delta \right)
\\
&
\leq
\P\left( \sup_{t\in[0,1]} \sum_{i=1}^{N(nt)} \big(Z_i \one{Z_i \leq n\delta} - \E Z\big) > n\epsilon \right)
=
\P\left( \sup_{0\leq j\leq N(n)} \sum_{i=1}^{j} \big(Z_i \one{Z_i \leq n\delta} - \E Z\big) > n\epsilon \right)
\\
&
\leq
\P\left( \sup_{0\leq j\leq 2n} \sum_{i=1}^{j} \big(Z_i \one{Z_i \leq n\delta} - \E Z\big) > n\epsilon, N(n) < 2n \right)
+ \P\big(N(n) \geq 2n\big)
\\
&
\leq
\P\left( \sup_{0\leq j\leq 2n} \sum_{i=1}^{j} \big(Z_i \one{Z_i \leq n\delta} - \E Z\big) > n\epsilon\right)
 + \P\big(N(n) \geq 2n\big).
\end{align*}
 From Lemma~\ref{lemma:negligence-positive} and the fact that the second term decays at an exponential rate,
\begin{align}
&\limsup_{n\to\infty}\frac1{L(n)n^\alpha}
\P\left(Z_{R_n^{-1}(k)} \le n\delta,\sup_{t\in[0,1]}\bar H_n^k(t) > \epsilon\right)
\leq -(\epsilon/3)^\alpha (\epsilon/\delta)^{1-\alpha}. \label{ineq:log-asymptotics-for-Xnk-positive}
\end{align}
Turning to (ii),
\begin{align*}
\text{(ii)}
&
\leq
\P\left(\sup_{t\in[0,1]} \sum_{i=1}^{N(nt)}\left(\E Z - Z_i\one{R_n(i)>k}\right)> n\epsilon\right)
\\
&
=
\P\left(\sup_{t\in[0,1]} \sum_{i=1}^{N(nt)}\Big(\E Z - Z_i\one{Z_i\leq n\delta} + Z_i\left(\one{Z_i\leq n\delta} - \one{R_n(i)>k}\right)\Big)> n\epsilon\right)
\\
&
\leq
\P\left(\sup_{t\in[0,1]} \sum_{i=1}^{N(nt)}\Big(\E Z - Z_i\one{Z_i\leq n\delta} + Z_i\I_{\{Z_i\leq n\delta\}\cap\{R_n(i)\leq k\}}\Big)> n\epsilon\right)
\\
&
\leq
\P\left(\sup_{t\in[0,1]} \sum_{i=1}^{N(nt)}\Big(\E Z - Z_i\one{Z_i\leq n\delta}\Big) + kn\delta> n\epsilon\right)
\\
&
=
\P\left(\sup_{t\in[0,1]} \sum_{i=1}^{N(nt)}\Big(\E Z - Z_i\one{Z_i\leq n\delta}\Big) > n(\epsilon-k\delta) \right)
\\
&
\leq
\P\left(\sup_{0\leq j \leq 2n} \sum_{i=1}^{j}\Big(\E Z - Z_i\one{Z_i\leq n\delta}\Big) > n(\epsilon-k\delta), N(nt) < 2n \right) + \P(N(nt) \geq 2n)
\\
&
\leq
\P\left(\sup_{0\leq j \leq 2n} \sum_{i=1}^{j}\Big(\E Z - Z_i\one{Z_i\leq n\delta}\Big) > n(\epsilon-k\delta)\right) + \P(N(nt) \geq 2n).
\end{align*}
Applying Lemma~\ref{lemma:negligence-negative} to the first term and noticing that the second term vanishes at an exponential rate, we conclude that for $\delta$ and $k$ such that $k\delta < \epsilon$
\begin{equation}\label{eq:log-asymptotics-for-Xnk-negative}
\limsup_{n\to\infty} \frac1{L(n)n^\alpha} \log \P\left(Z_{R_n^{-1}(k)} \le n\delta,\sup_{t\in[0,1]}-\bar H_n^k(t) > \epsilon\right) = -\infty.
\end{equation}
From \eqref{ineq:log-asymptotics-for-Xnk-positive} and \eqref{eq:log-asymptotics-for-Xnk-negative},
\begin{equation}
\limsup_{n\to\infty}\frac1{L(n)n^\alpha}
\log\P\left(Z_{R_n^{-1}(k)} \le n\delta,\|\bar H_n^k\|_\infty > \epsilon\right)
\leq -(\epsilon/3)^\alpha (\epsilon/\delta)^{1-\alpha}.
\end{equation}
This together with \eqref{eq:bbbb} and \eqref{eq:log-asymptotics-for-QnGammak},
\begin{equation*}
\limsup_{n\to\infty}\frac1{L(n)n^\alpha}
\P\left(\|\bar H_n^k\|_\infty > \epsilon\right) \leq \max\{-(\epsilon/3)^\alpha(\epsilon/\delta)^{1-\alpha},\ -k\delta^\alpha\}
\end{equation*}
for any $\delta$ and $k$ such that $k\delta < \epsilon$. Choosing, for example, $\delta = \frac{\epsilon}{2k}$ and letting $k\to \infty$, we arrive at the conclusion of the lemma.
\end{proof}

%%%%%%%%%%%%%%%%%%%%%%%%%%%%%%%%%%%%%%%%%%%%%%%%%%%%%%%%%
%
% Subsection Proof of Theorem 2.3
%
%%%%%%%%%%%%%%%%%%%%%%%%%%%%%%%%%%%%%%%%%%%%%%%%%%%%%%%%%

\subsection{Proof of Lemma~\ref{lemma:rw-lem1}, \ref{lemma:rw-lem3}, and \ref{lemma:rw-lem2}}
\label{sec:proof-for-rw-lemmas}
\begin{proof}[Proof of Lemma~\ref{lemma:rw-lem1}]
We follow a similar program as in the proof of Lemma~\ref{lemma:equivalence-J}.
Recall that $\tilde Q^\gets(x) = \inf\{s>0: \P(Z\geq s) < y\}$ and $V_{(1)},\ldots, V_{(n-1)}$ are the order statistics of $n-1$ i.i.d.\ Uniform(0,1) random variables $V_1,\ldots, V_{n-1}$.
We first claim that $(\tilde Q^\gets(V_{(1)})/n, \ldots, \tilde Q^\gets(V_{(k)})/n)$ satisfies the LDP with speed $L(n)n^\alpha$ and the good rate function $\check I_k$ defined in \eqref{def-I-check-k}.
Let $f$ be a bounded continuous function such that $|f(x)|< M$, $x\in \R_+^k$ for some $M\in \R$. 
We want to prove that
\begin{align*}
&\lim_{n\to\infty} \frac{1}{L(n)n^\alpha} \log\E \exp\big\{L(n)n^\alpha f\big(\tilde Q^\gets(V_{(1)})/n,\ldots, \tilde Q^\gets(V_{(k)})/n\big)\big\}
= \sup_{x}\Lambda_f( x),
\end{align*}
where $\Lambda_f = f - \check I_k$ to invoke inverse Varadhan lemma and establish the LDP for $\big(\tilde Q^\gets(V_{(1)})/n,\ldots, \allowbreak\tilde Q^\gets(V_{(k)})/n\big)$.
Recall that in the proof of Lemma~\ref{lemma:ldp-for-k-biggest-jump-sizes}, we have shown that the supremum of $f(x) - \check I_k(x)$ over $\R_+^k$ is attained. 
Let $\hat x$ denote one of the optimizers that attain the supremum.  
Then, due to the form of $\check I_k$, for any given $\epsilon>0$, we can find $\delta>0$ and $\check x = (\check x_1,\ldots, \check x_k)$ such that $\check x_i  \geq \check x_{i+1} + \delta$ for $i=1,\ldots, k-1$ and $x \in \prod_{i=1}^k [\check x_i, \check x_i+\delta]$ implies 
$$\check I_k(x) \geq \check I_k (\hat x)-\epsilon
\qquad\qquad
\text{and}
\qquad\qquad
f( x) - \check I_k(x) \geq f(\hat x) - \check I_k(\hat x) - \epsilon.$$
Therefore, if we set 
$A_n(\delta) \triangleq\{(y_1,\ldots,y_k): \tilde Q^\gets(y_i)/n \in [\check x_i, \check x_i+\delta],\ i=1,\ldots,k+1\}$, then  
$y\in A_n(\delta)$ implies 
$$\check I_k(\tilde Q^\gets(y_1)/n,\ldots,\tilde Q^\gets(y_k)/n) \geq \check I_k(\hat x) -\epsilon$$ 
and
$$f(\tilde Q^\gets(y_1)/n,\ldots,\tilde Q^\gets(y_k)/n) -  \check I_k(\tilde Q^\gets(y_1)/n,\ldots,\tilde Q^\gets(y_k)/n) \geq f(\hat x) - \check I_k(\hat x) - \epsilon, $$ 
and hence, 
$$f(\tilde Q^\gets(y_1)/n,\ldots,\tilde Q^\gets(y_k)/n) \geq f(\hat x) - 2\epsilon.$$
Note also that $\tilde Q^\gets(y) < x$ if and only if $\P(Z\geq x) < y$, and hence, $\P(Z\geq n(\check x_i+\delta)) <  y_i \leq \P(Z \geq n\check x_i)$ implies $ \tilde Q^\gets(y_i)/n \in [\check x_i, \check x_i+\delta]$.
We have that
$$ \I_{\{\tilde Q^\gets(y_i)/n\in [\check x_i, \check x_i + \delta],\ i=1,\ldots, k\}} 
\geq \I_{\{\P(Z\geq n(\check x_i+\delta)) <  y_i \leq \P(Z \geq n\check x_i),\ i=1,\ldots, k\}},$$
and hence, for $y_{k+1}\geq \P(Z\geq n\check x_i)$,
\begin{align*}
&\int_0^{y_{k+1}}\cdots \int_0^{y_2}  \I_{\{\tilde Q^\gets(y_i)/n\in [\check x_i, \check x_i + \delta],\ i=1,\ldots, k\}}\ dy_1\cdots dy_k 
\\
&\geq
\int_0^{y_{k+1}}\cdots \int_0^{y_2}  \I_{\{\P(Z\geq n(\check x_i+\delta)) <  y_i \leq \P(Z \geq n\check x_i),\ i=1,\ldots, k\}}\ dy_1\cdots dy_k  
\\
&=
\int_0^1\cdots \int_0^1  \I_{\{\P(Z\geq n(\check x_i+\delta)) <  y_i \leq \P(Z \geq n\check x_i),\ i=1,\ldots, k\}}\ dy_1\cdots dy_k  
\\
&=
\prod_{i=1}^k \big(\P(Z\geq n\check x_i) - \P(Z \geq n\check x_i+n\delta)\big).
\end{align*}
Therefore,
\begin{align*}
&\E \exp\big\{L(n)n^\alpha f\big(\tilde Q^\gets(V_{(1)})/n,\ldots, \tilde Q^\gets(V_{(k)})/n\big)\big\}
\\
&\geq
\E \exp\big\{L(n)n^\alpha f\big(\tilde Q^\gets(V_{(1)})/n,\ldots, \tilde Q^\gets(V_{(k)})/n\big)\big\}\I_{\{(V_{(1)},\ldots,V_{(k)})\in A_n(\delta)\}}
\\
&
= 
\int_0^{1}\int_0^{y_{n-1}}\cdots \int_0^{y_2}e^{L(n)n^\alpha f(\tilde Q^\gets(y_1)/n,\ldots,\tilde Q^\gets(y_k)/n)} {n!} \I_{\{(y_1,\ldots, y_k) \in A_n(\delta)\}}\ dy_1\cdots dy_{n-2} dy_{n-1}
\\
&
\geq
{n!}e^{L(n)n^\alpha (f(\hat x)-2\epsilon)} \int_0^{1}\int_0^{y_{n-1}}\cdots \int_0^{y_2}  \I_{\{\tilde Q^\gets(y_i)/n\in [\check x_i, \check x_i + \delta],\ i=1,\ldots, k\}}\ dy_1\cdots dy_{n-2} dy_{n-1}
\\
&
\geq
{n!}e^{L(n)n^\alpha (f(\hat x)-2\epsilon)} 
\prod_{i=1}^k \big(\P(Z\geq n\check x_i) - \P(Z \geq n\check x_i+n\delta)\big) \int_0^{1}\int_0^{y_{n-1}}\cdots\int_{\P(Z\geq n\check x_i)}^{y_{k+2}}  dy_{k+1}\cdots dy_{n-2} dy_{n-1}
\\
&
=
{n!}e^{L(n)n^\alpha (f(\hat x)-2\epsilon)} 
\prod_{i=1}^k \big(\P(Z \geq n\check x_i) - \P(Z \geq n\check x_i+n\delta)\big)
\frac1{(n-k-1)!}\big(1-\P(Z\geq n \check x_i)\big)^{n-k-1}.
\end{align*}
Since 
$$
\liminf_{n\to\infty}\frac1{L(n)n^\alpha}\log\prod_{i=1}^k \big(\P(Z \geq n\check x_i) - \P(Z \geq n\check x_i+n\delta)\big) = 
-\sum_{i=1}^k\hat x^\alpha = -\check I_k(\hat x)
$$
and
$$
\liminf_{n\to\infty}\frac1{L(n)n^\alpha}\log \big(1-\P(Z\geq n \check x_i)\big)^{n-k-1} = 0,
$$
we get
\begin{align*}
\liminf_{n\to\infty}\frac1{L(n)n^\alpha}\log\E e^{L(n)n^\alpha f\big(\tilde Q^\gets(V_{(1)})/n,\ldots, \tilde Q^\gets(V_{(k)})/n\big)}
&\geq f(\hat x)  - 2\epsilon - \check I_k(\hat x)
\\
&= \sup_{x\in \R_+^k} \{f(x) - \check I_k(x)\} - 2\epsilon.
\end{align*}
Letting $\epsilon \to 0$, we arrive at the lower bound.

Turning to the upper bound,
\begin{align*}
&\E \exp\big\{L(n)n^\alpha f\big(\tilde Q^\gets(V_{(1)})/n,\ldots, \tilde Q^\gets(V_{(k)})/n\big)\big\}
\\
&=
\E \exp\big\{L(n)n^\alpha f\big(\tilde Q^\gets(V_{(1)})/n,\ldots, \tilde Q^\gets(V_{(k)})/n\big)\big\}
\I_{\{\tilde Q^\gets(V_{(1)})/n> R\}}
\\
&\quad\qquad+
\E \exp\big\{L(n)n^\alpha f\big(\tilde Q^\gets(V_{(1)})/n,\ldots, \tilde Q^\gets(V_{(k)})/n\big)\big\}
\I_{\{\tilde Q^\gets(V_{(1)})/n\leq R\}}.
\end{align*}
For the first term, note that
\begin{align}
&\E \exp\big\{L(n)n^\alpha f\big(\tilde Q^\gets(V_{(1)})/n,\ldots, \tilde Q^\gets(V_{(k)})/n\big)\big\}
\I_{\{\tilde Q^\gets(V_{(1)})/n> R\}}
\nonumber
\\
&
\leq
\E \exp\{L(n)n^\alpha M\}\I\{\tilde Q^\gets(V_{(1)})/n > R\}
\leq
\exp\{L(n)n^\alpha M\}\,\P(V_{(1)} \leq \P(Z\geq nR))
%\\&=
%\exp(L(n)n^\alpha M)\,
%\int_0^{\P(Z\geq nR)} n\left(1-s\right)^{n-1} ds
\nonumber
\\
&
=
\exp\{L(n)n^\alpha M\}\,
\Big(1-\big(1-\P(Z\geq nR)\big)^{n-1}\Big)
\nonumber
\\
&=
\exp\{L(n)n^\alpha M\}\,
\Big(1-\big(1-\exp\{-L(nR)(nR)^\alpha\}\big)^{n-1}\Big).
\label{left-with-straightforward-algebra}
\end{align}
Also, from the generic inequality $1-\exp(-z) \leq z$, 
\begin{align*}
1-(1-1/x)^y 
&=  1-\{ (1-1/x)^x\}^{y/x} = 1-\exp \log \{(1-1/x)^x\}^{y/x}
= 1-\exp \{(y/x)\log (1-1/x)^x\}
\\
&\leq
(y/x)\log (1-1/x)^{-x}
\end{align*}
for any $x, y>0$.
Setting $x=\exp(L(nR)(nR)^\alpha)$ and $y = n-1$, we get
\begin{align*}
&1-\big(1-\exp\{-L(nR)(nR)^\alpha\}\big)^{n-1}
\\
&
\leq  (n-1)\exp\{-L(nR)(nR)^\alpha\} \log\Big(1-1/{\exp\{L(nR)(nR)^\alpha\}}\Big)^{-\exp\{L(nR)(nR)^\alpha\}}.
\end{align*}
Substituting this into \eqref{left-with-straightforward-algebra},  we arrive at the upper bound for the first term:
\begin{align*}
&\limsup_{n\to\infty} \frac{1}{L(n)n^\alpha} \log\E \exp\big\{L(n)n^\alpha f\big(\tilde Q^\gets(V_{(1)})/n,\ldots, \tilde Q^\gets(V_{(k)})/n\big)\big\}
\I_{\{\tilde Q^\gets(V_{(1)})/n> R\}}
%\\
%&
\leq M-R^\alpha.
\end{align*}
For the second term,
fix $\epsilon>0$ and pick $\{\check x^{(1)},\ldots,\check x^{(m)}\}\subset \R_+^k$ in such a way that 
$$\bigg\{\prod_{j=1}^k[\check x^{(l)}_j-\epsilon,\check x^{(l)}_j+\epsilon]\bigg\}_{l=1,\ldots,m}$$ 
covers $\{(x_1,\ldots,x_k): R \geq x_1 \geq  x_2 \geq \cdots \geq x_k \geq 0\}$ and $\check x_1^{(l)} \geq \check x_2^{(l)} \geq \cdots \geq \check x_k^{(l)}\geq 0$ for $l=1,\ldots, m$. 
Set
\begin{equation*}
A_{n,l}(R)\triangleq\Big\{(y_1,\ldots,y_k)\in \R_+^k: y_1\leq \cdots \leq y_k, Q^\gets(y_j)/n \in \big[\check x_j^{(l)}-\epsilon,\check x_j^{(l)}+\epsilon\big],\ j=1,\ldots, k\Big\}.
\end{equation*}
Note that 
$y_1\leq \cdots \leq y_k\ \&\ Q^\gets(y_1)/n \leq R$ implies $R \geq Q^\gets(y_1)/n \geq Q^\gets(y_2)/n \geq \cdots \geq Q^\gets(y_k)/n$, which, in turn, implies 
$Q^\gets(y_j)/n \in \big[\check x_j^{(l)}-\epsilon,\check x_j^{(l)}+\epsilon\big],\ j=1,\ldots, k$ for some $l\in \{1,\ldots,m\}$.
Therefore, 
$
\Big\{(y_1,\ldots,y_k)\in \R_+^k: y_1 \leq \cdots \leq y_k,\ Q^\gets(y_1)/n \leq R\Big\}
\subseteq\bigcup_{l=1}^{m}A_{n,l}(R)$, and hence,
%and since $Q^\gets(V_{(i)})/n \leq R$ for $i=2,\ldots,k$ on $\{Q^\gets(V_{(1)})/n \leq R\}$,
\begin{align*}
&\E \exp\big\{L(n)n^\alpha f\big(\tilde Q^\gets(V_{(1)})/n,\ldots, \tilde Q^\gets(V_{(k)})/n\big)\big\}
\I_{\{\tilde Q^\gets(V_{(1)})\leq R\}}
\\
&
%\leq
%\E \exp\big\{L(n)n^\alpha f\big(\tilde Q^\gets(V_{(1)})/n,\ldots, \tilde Q^\gets(V_{(k)})/n\big)\big\}
%\I_{\{\tilde Q^\gets(V_{(i)})\leq R,\ i=1,\ldots, k\}}
%\\
%&
\leq
\sum_{i=1}^m
\E \exp\big\{L(n)n^\alpha f\big(\tilde Q^\gets(V_{(1)})/n,\ldots, \tilde Q^\gets(V_{(k)})/n\big)\big\}
\I_{\{(V_{(1)},\ldots,V_{(k)})\in A_{n,l}(R)\}}.
\end{align*}
Note that
\begin{align*}
&\E \exp\big\{L(n)n^\alpha f\big(\tilde Q^\gets(V_{(1)})/n,\ldots, \tilde Q^\gets(V_{(k)})/n\big)\big\}
\I_{\{(V_{(1)},\ldots,V_{(k)})\in A_{n,l}(R)\}}
\\&
=
\E e^{L(n)n^\alpha \Lambda_f\big(\tilde Q^\gets(V_{(1)})/n,\ldots, \tilde Q^\gets(V_{(k)})/n\big)}
e^{L(n)n^\alpha \check I_k\big(\tilde Q^\gets(V_{(1)})/n,\ldots, \tilde Q^\gets(V_{(k)})/n\big)}
\I_{\{(V_{(1)},\ldots,V_{(k)})\in A_{n,l}(R)\}}
\\
&\leq
e^{L(n)n^\alpha \Lambda_f(\hat x_1,\ldots,\hat x_k)}\E 
e^{L(n)n^\alpha \check I_k\big(\tilde Q^\gets(V_{(1)})/n,\ldots, \tilde Q^\gets(V_{(k)})/n\big)}
\I_{\{(V_{(1)},\ldots,V_{(k)})\in A_{n,l}(R)\}}
\\
&\leq
e^{L(n)n^\alpha \Lambda_f(\hat x_1,\ldots,\hat x_k)} 
e^{L(n)n^\alpha \check I_k\big(\check x_1^{(l)}+\epsilon, \ldots, \check x_k^{(l)}+\epsilon\big)}
\E\I_{\{(V_{(1)},\ldots,V_{(k)})\in A_{n,l}(R)\}}
\end{align*}
and
\begin{align*}
&\E\I_{\{(V_{(1)},\ldots,V_{(k)})\in A_{n,l}(R)\}}
\\
&\int_0^1\int_0^{y_{n-1}}\cdots \int_0^{y_2} {(n-1)!} \I_{\{(y_1,\ldots, y_k) \in A_{n,l}(R)\}}\ dy_1\cdots dy_{n-2} dy_{n-1}
\\
&
\geq
(n-1)!\int_0^{1}\int_0^{y_{n-1}}\cdots \int_0^{y_2}  \I_{\{\tilde Q^\gets(y_i)/n\in [\check x_i^{(l)}-\epsilon, \check x_i^{(l)} + \epsilon],\ i=1,\ldots, k\}}\ dy_1\cdots dy_{n-2} dy_{n-1}
\\
&
\leq
(n-1)!
\prod_{i=1}^k \big(\P(Z\geq n\check x_i^{(l)}-n\epsilon) - \P(Z\geq n\check x_i^{(l)}+n\epsilon)\big) \int_0^1
\int_0^{y_{n-1}}\cdots\int_0^{y_{k+2}}  dy_{k+1}\cdots dy_{n-2} dy_{n-1}
\\
&
=
{(n-1)!}
\prod_{i=1}^k \big(\P(Z\geq n\check x_i^{(l)}-n\epsilon) - \P(Z\geq n\check x_i^{(l)}+n\epsilon)\big) 
\frac{1}{(n-k-1)!}.
\\
&
\leq
n^k
\prod_{i=1}^k \big(\P(Z\geq n\check x_i^{(l)}-n\epsilon) - \P(Z\geq n\check x_i^{(l)}+n\epsilon)\big).
\end{align*}
Therefore,
\begin{align*}
&\E \exp\big\{L(n)n^\alpha f\big(\tilde Q^\gets(V_{(1)})/n,\ldots, \tilde Q^\gets(V_{(k)})/n\big)\big\}
\I_{\{\tilde Q^\gets(V_{(1)})\leq R\}}
\\
&\leq
n^ke^{L(n)n^\alpha \Lambda_f(\hat x_1,\ldots,\hat x_k)} 
\sum_{i=1}^l
e^{L(n)n^\alpha \check I_k\big(\check x_1^{(l)}+\epsilon, \ldots, \check x_k^{(l)}+\epsilon\big)}
\prod_{i=1}^k \big(\P(Z\geq n\check x_i^{(l)}-n\epsilon) - \P(Z\geq n\check x_i^{(l)}+n\epsilon)\big).
\end{align*}
Note that 
\begin{align*}
&\limsup_{n\to\infty}\frac1{L(n)n^\alpha}\log\prod_{i=1}^k \big(\P(Z\geq n\check x_i^{(l)}-n\epsilon) - \P(Z\geq n\check x_i^{(l)}+n\epsilon)\big) 
\\
&= 
-\sum_{i=1}^k(\check x_i^{(l)}-\epsilon)_+^\alpha 
= -\check I_k\big((\check x_1^{(l)}-\epsilon,\ldots,\check x_k^{(l)}-\epsilon)_+\big),
\end{align*} where $(y)_+$ denotes $\max \{y, 0\}$ and $(y_1,\ldots, y_k)_+$ denotes $((y_1)_+,\ldots,(y_k)_+)$.
This along with the principle of the largest term, 
\begin{align*}
&\limsup_{n\to\infty} \frac{1}{L(n)n^\alpha} \log\E \exp\big\{L(n)n^\alpha f\big(\tilde Q^\gets(V_{(1)})/n,\ldots, \tilde Q^\gets(V_{(k)})/n\big)\big\}
\I_{\{\tilde Q^\gets(V_{(1)})\leq R\}}
\\
&\leq 
\max_{l=1,\ldots,m} \Big(\Lambda_f(\hat x_1,\ldots, \hat x_k) 
+ \check I_k\big(\check x_1^{(l)}+\epsilon, \ldots, \check x_k^{(l)}+\epsilon\big)
- \check I_k\big((\check x_1^{(l)}-\epsilon, \ldots, \check x_k^{(l)}-\epsilon)_+\big)\Big)
\\
&\leq 
\max_{l=1,\ldots,m} \Big(\Lambda_f(\hat x_1,\ldots, \hat x_k) 
+ k\epsilon^\alpha
\Big).
\end{align*}
Sending $\epsilon\to0$, we get
\begin{align*}
&\limsup_{n\to\infty} \frac{1}{L(n)n^\alpha} \log\E \exp\big\{L(n)n^\alpha f\big(\tilde Q^\gets(V_{(1)})/n,\ldots, \tilde Q^\gets(V_{(k)})/n\big)\big\}
\I_{\{\tilde Q^\gets(V_{(1)})\leq R\}}
\\
&\leq 
\Lambda_f(\hat x_1,\ldots, \hat x_k) .
\end{align*}
Now, combining with the bound for the first term, and sending $R\to\infty$, 
we get the upper bound:
\begin{align*}
&\limsup_{n\to\infty} \frac{1}{L(n)n^\alpha} \log\E \exp\big\{L(n)n^\alpha f\big(\tilde Q^\gets(V_{(1)})/n,\ldots, \tilde Q^\gets(V_{(k)})/n\big)\big\}
\\
&\leq 
\max\{\Lambda_f(\hat x_1,\ldots, \hat x_k) , M-R^\alpha\}
\to \Lambda_f(\hat x_1,\ldots, \hat x_k).
\end{align*}
Together with the lower bound, we get
\begin{align*}
&\lim_{n\to\infty} \frac{1}{L(n)n^\alpha} \log\E \exp\big\{L(n)n^\alpha f\big(\tilde Q^\gets(V_{(1)})/n,\ldots, \tilde Q^\gets(V_{(k)})/n\big)\big\}
= \Lambda_f(\hat x_1,\ldots, \hat x_k),
\end{align*}
which in turn allows us to apply Bryc's inverse Varadhan Lemma to prove that $\big(\tilde Q^\gets(V_{(1)})/n,\ldots,\allowbreak\tilde Q^\gets(V_{(k)})/n\big)$ satisfies the LDP with the rate function $\check I_k$.
From Theorem 4.14 of \cite{ganesh2004big}, we see that 
$\big(\tilde Q^\gets(V_{(1)})/n,\ldots,\allowbreak\tilde Q^\gets(V_{(k)})/n, Z/n\big)$ satisfies the LDP with the rate function $\check I_k'$ given by
\begin{equation}\label{def-I-check-k-prime}
\check I_k'(x_1,\ldots,x_{k+1})=
\begin{cases}
\sum_{i=1}^{k+1} x_i^\alpha & \mathrm{if}\ x_1 \ge x_2 \ge \dots \ge x_k \ge 0 \text{ and } x_{k+1} \geq 0\\
\infty, & \mathrm{o.w.}
\end{cases}.
\end{equation}
Proceeding with essentially the same argument as in Corollary~\ref{lemma:ldp-for-k-biggest-jump-sizes-and-their-times} and Lemma~\ref{lemma:sample-path-ldp-for-J-hat}---except for considering a mapping $\tilde T_k:(x_1,\ldots,x_{k+1},u_1,\ldots,u_k) \mapsto \sum_{i=1}^k x_i \I_{[u_i,1]} + x_{k+1} \I_{\{1\}}$ instead of the mapping $T_k:(x_1,\ldots,x_{k},u_1,\ldots,u_k) \mapsto \sum_{i=1}^k x_i \I_{[u_i,1]}$) and $\tilde \D_{\leqslant k}$ instead of $\D_{\leqslant k}$---we  conclude that 
$\tilde J_n^k(t) = \frac1n \sum_{i=1}^k \tilde Q^\gets (V_{(i)}) \I_{[U_i,1]}(t)+\frac1n Z \I_{\{1\}}(t) $ satisfies the LDP with speed $L(n)n^\alpha$ and the rate function $\tilde I_k$  in \eqref{eq:rate-function-for-Ik-tilde}.
\end{proof}

\begin{proof}[Proof of Lemma~\ref{lemma:rw-lem3}] 
The proof is essentially identical to Lemma~\ref{lemma:Jk-approximates-J}, and hence, omitted. 
\end{proof}

\begin{proof}[Proof of Lemma~\ref{lemma:rw-lem2}]
Let 
$$
\check H_n^k(t) 
\triangleq 
\tilde H_n^k(t)  + \frac1n \E Z \I_{\{1\}}(t)
=
\frac1n \sum_{i=k+1}^{n-1}  \tilde Q^\gets (V_{(i)}) \I_{[U_i,1]}(t) - \frac1n \sum_{i=1}^{n-1} \E Z  \I_{[U_i,1]}(t)
.
$$
Since 
$\P(\|\tilde H_n^k\|_\infty \geq \epsilon) 
\leq \P(\|\tilde H_n^k\|_\infty \geq \epsilon/2) + \P(\|\frac 1n \E Z \sum_{i=1}^{n-1}\I_{[U_i,1]} + \frac1n \E Z \I_{\{1\}} \|_\infty \geq \epsilon/2)$ and
$ \P(\|\frac 1n \E Z \sum_{i=1}^{n-1}\I_{[U_i,1]}(t) + \frac1n \E Z \I_{\{1\}} \|_\infty >\epsilon/2) = 0$ for large enough $n$, we only need to prove that 
$$\lim_{k\to\infty}\limsup_{n\to\infty} \frac{1}{L(n)n^\alpha} \log\P(\| \check H_n^k\|_\infty > \epsilon) = -\infty.$$ 
To show this, we fix an arbitrary $\delta \in(0, \epsilon/k)$ and consider the following decomposition:
\begin{align*}
&\P(
\|
\check H_n^k\|_\infty > \epsilon )
\leq
 \P(
\|
\check H_n^k 
\|_\infty > \epsilon ,\  \tilde Q^\gets(V_{(k)}) < n\delta ) 
 + \P\left(
 \tilde Q^\gets(V_{(k)}) \geq n\delta \right).
\end{align*}
We first bound the second term. 
Since the density of the $k$\textsuperscript{th} order statistic of the uniform distribution on $[0,1]$ is $n{n-1\choose k-1} x^{k-1}(1-x)^{n-k}$,
\begin{align*}
\P\left(  \tilde Q^\gets(V_{(k)}) \geq n\delta \right) 
&= \P\left( V_{(k)} \leq  \P(Z\geq n\delta) \right)
\leq
\int_0^{ \P(Z\geq n\delta)} n  {n-1 \choose k-1} x ^{k-1} dx 
\\
&
=  {n \choose k} \big( \P(Z\geq n\delta)\big)^k
=  {n \choose k} \exp (-k L(n\delta) (n\delta)^\alpha)
\end{align*}
and hence, $\limsup_{n\to\infty} \frac{1}{L(n)n^\alpha} \log \P( \tilde Q_n^\gets (V_{(k)}) > n\delta) \leq -k\delta^\alpha$.
For the first term,
\begin{align*}
& \P(
\|
 \check H_n^k 
\|_\infty > \epsilon ,\  \tilde Q^\gets(V_{(k)}) < n\delta ) 
\\
&= \P\Big(
\sup_{t\in[0,1]}
 \check H_n^k (t)
 > \epsilon ,\  \tilde Q^\gets(V_{(k)}) \leq n\delta \Big) 
+
\P\Big(
\sup_{t\in[0,1]}
- \check H_n^k 
 > \epsilon ,\  \tilde Q^\gets(V_{(k)}) \leq n\delta \Big) 
 \\
& \leq 
\P\Big( \max_{1\leq j \leq n-1} \sum_{i=1}^j (Z_i\I_{\{Z_i \leq n\delta\}} - \E Z) > n\epsilon\Big)  
+
\P\Big( \max_{1\leq j \leq n-1} \sum_{i=1}^j (\E Z - Z_i\I_{\{Z_i \leq n\delta\}})+kn\delta > n\epsilon\Big) .
\end{align*}
Note that from Lemma~\ref{lemma:negligence-positive}, 
$$
\limsup_{n\to\infty} \frac{1}{L(n)n^\alpha}\P\Big( \max_{1\leq j \leq n-1} \sum_{i=1}^j (Z_i\I_{\{Z_i \leq n\delta\}} - \E Z) > n\epsilon\Big)  \leq -(\epsilon/3)^\alpha (\epsilon/\delta)^{1-\alpha}
$$
and from \ref{lemma:negligence-negative}, since $\delta < \epsilon/k$,
$$
\limsup_{n\to\infty} \frac{1}{L(n)n^\alpha}\P\Big( \max_{1\leq j \leq n-1} \sum_{i=1}^j (\E Z - Z_i\I_{\{Z_i \leq n\delta\}})+kn\delta > n\epsilon\Big) = -\infty.
$$
Therefore, 
$$
\limsup_{n\to\infty} \frac{1}{L(n)n^\alpha} \P(\| \tilde H_n^k\|_\infty>\epsilon, Q_n^\gets(V_{(k)}) \leq n\delta) \leq \max \{ -(\epsilon/3)^\alpha (\epsilon/\delta)^{1-\alpha}, -\infty\} =  -(\epsilon/3)^\alpha (\epsilon/\delta)^{1-\alpha}.
$$
Applying the principle of the maximum term once again,
$$
\lim_{k\to\infty}\limsup_{n\to\infty} \frac{1}{L(n)n^\alpha} \P(\| \tilde H_n^k\|_\infty>\epsilon) \leq \lim_{k\to\infty}\max \{ -(\epsilon/3)^\alpha (\epsilon/\delta)^{1-\alpha}, -k\delta^\alpha\} = - (\epsilon/3)^\alpha (\epsilon/\delta)^{1-\alpha}.
$$
Since $\delta$ can be chosen arbitrarily small, 
$$
\lim_{k\to\infty}\limsup_{n\to\infty} \frac{1}{L(n)n^\alpha} \P(\| \tilde H_n^k\|_\infty>\epsilon) = -\infty.
$$
%\newpage
%Condition (i), (ii), and (iii) of  Proposition~\ref{theorem:extended-ldp-from-decomposition} is again verified by Lemma~\ref{lemma:equivalence-J}, Lemma~\ref{lemma:Jk-approximates-J}, and Lemma~\ref{lemma:negligence-K}.
%For (iv), note that
%since $d_{J_1}(\bar S_n, \bar J_n^k)\leq d_{J_1}(\bar S_n, \bar X_n) + d_{J_1}(\bar X_n, \bar J_n^k)\leq d_{J_1}(\bar S_n, \bar X_n) + \|\bar X_n - \bar J_n^k\|_\infty\leq d_{J_1}(\bar S_n, \bar X_n) + \|\bar H_n^k\|_\infty + \|\bar R_n\|_\infty
%$,
%\begin{align*}
%&\limsup_{n\to\infty} \frac{\log \P(d_{J_1}(\bar S_n, \bar J_n^k)>\epsilon )}{L(n)n^\alpha}
%\\
%&
%\leq
%\limsup_{n\to\infty} \frac{\log\big\{ \P(d_{J_1}(\bar S_n, \bar X_n)>\epsilon/3 )+ \P(\|\bar H_n^k \|_\infty > \epsilon/3 ) + \P(\| \bar R_n\|_\infty > \epsilon/3)\big\}}{L(n)n^\alpha}
%\\
%&
%= -\infty.
%\end{align*}

%From the bounds
%\begin{equation}
%P(\bar S_n \in A) \leq P(\sup_{t\in [0,1]} |N(nt)/n - t| > \delta) + P(\bar X_n \in A^\delta),
%\end{equation}
%\begin{equation}
%P(\bar S_n \in A) \geq  P(\bar X_n \in A^{-\delta}) - P(\sup_{t\in [0,1]} |N(nt)/n - t| > \delta),
%\end{equation}
%the desired result follows quickly.

\end{proof}

%%%%%%%%%%%%%%%%%%%%%%%%%%%%%%%%%%%%%%%%%%%%%%%%%%%%%%%%%
%
% Subsection Proof of Theorem 2.3
%
%%%%%%%%%%%%%%%%%%%%%%%%%%%%%%%%%%%%%%%%%%%%%%%%%%%%%%%%%
\subsection{Proof of Theorem~\ref{theorem:multi-d-levy-processes}}
\label{sec:proof-of-theorem-2-3}
We follow a similar program as in Section~\ref{subsection:extended-ldp-1-d-levy-processes} and the earlier subsections of this section.
Let $\bar Q_{n}^{(i)}(j)\triangleq Q_{n}^{\gets}(\Gamma_j^{(i)})/n$ where $Q_{n}^{\gets}(t) = \inf\{s> 0: n\nu[s,\infty) < t\}$ and $\Gamma^{(i)}_l = E^{(i)}_1 + \cdots + E^{(i)}_l$ where $E^{(i)}_j$'s are independent standard exponential random variables.
Let $U^{(i)}_j$ be independent uniform random variables in [0,1] and
 $Z_n^{(i)} \triangleq \big(\bar Q_{n}^{(i)}(1),\ldots,\bar Q_{n}^{(i)}(k),\allowbreak U_1^{(i)},\ldots,U_{k}^{(i)}\big)$.
The following corollary is an immediate consequence of Corollary~\ref{lemma:ldp-for-k-biggest-jump-sizes-and-their-times} and Theorem 4.14 of \cite{ganesh2004big}.
\begin{corollary}
$
\big(Z_n^{(1)},\ldots,Z_n^{(d)}\big)$ satisfies the LDP in $\prod_{i=1}^{d}\big(\R^{k}_+\times[0,1]^{k}\big)$ with the rate function $\hat I_{k}^d(z^{(1)},\ldots,z^{(d)}) \triangleq \sum_{j=1}^d \hat I_{k}(z^{(j)})$
%\begin{align*}
%\check I_{k_1,\ldots,k_d}(z^{(1)},\ldots,z^{(d)})
%=
%\begin{cases}
%\sum_{j=1}^{d}\sum_{i=1}^{k_j}(x^{(j)}_i)^{\alpha_j} & \text{if } x^{(j)}_1\ge x^{(j)}_2\geq\cdots\geq x^{(j)}_{k_j}\ge 0  \text{ for each $j=1,\ldots, d$}  \\
%\infty, & o.w.
%\end{cases}.
%\end{align*}
where $z^{(j)}=(x^{(j)}_1,\dots,x^{(j)}_{k},u^{(j)}_1,\dots,u^{(j)}_{k})$ for each $j \in \{1,\dots,d\}$.
\end{corollary}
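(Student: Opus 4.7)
The plan is to derive this multi-dimensional LDP directly from the one-dimensional LDP established in Corollary~\ref{lemma:ldp-for-k-biggest-jump-sizes-and-their-times} by exploiting the independence of the $d$ L\'evy processes. Indeed, since $X^{(1)}, \ldots, X^{(d)}$ are independent, the triples $(Q_n^{(i),\gets}, \Gamma^{(i)}, U^{(i)})$ are independent across $i$, and therefore the random vectors $Z_n^{(1)}, \ldots, Z_n^{(d)}$ are independent $\prod_{i=1}^d$-valued random elements, each of which satisfies the one-dimensional LDP on $\R^{k}_+\times[0,1]^{k}$ with speed $L(n)n^\alpha$ and good rate function $\hat I_k$.

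The key tool is the general principle that if $W_n^{(1)}, \ldots, W_n^{(d)}$ are independent and each $W_n^{(i)}$ satisfies the LDP with good rate function $J_i$ and common speed $a_n$, then the joint vector $(W_n^{(1)}, \ldots, W_n^{(d)})$ satisfies the LDP on the product space with the sum rate function $J(w_1,\ldots,w_d) = \sum_{i=1}^d J_i(w_i)$; this is the content of Theorem~4.14 of \cite{ganesh2004big}. Applied here, it gives precisely the rate function $\hat I_k^d(z^{(1)},\ldots,z^{(d)}) = \sum_{j=1}^d \hat I_k(z^{(j)})$ claimed in the statement.

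The two things I would verify en route are: first, that $\hat I_k^d$ is a genuine good rate function on the product space, which is immediate since each $\hat I_k$ is good and finite sums of good rate functions on product spaces are good (sub-level sets of $\hat I_k^d$ are contained in products of sub-level sets of $\hat I_k$, which are compact by Corollary~\ref{lemma:ldp-for-k-biggest-jump-sizes-and-their-times}); second, that the product topology on $\prod_{i=1}^d(\R^k_+\times[0,1]^k)$ matches the one used in the statement, which is the standard choice. Neither step presents any substantive obstacle; the whole argument is essentially a direct citation of the independence lemma for LDPs, and no step requires calculation beyond what was already carried out in the one-dimensional case.
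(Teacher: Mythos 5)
Your argument is correct and matches the paper's own reasoning: the paper states this corollary is an immediate consequence of Corollary~\ref{lemma:ldp-for-k-biggest-jump-sizes-and-their-times} together with Theorem~4.14 of \cite{ganesh2004big}, which is precisely the independence-plus-product-LDP route you take.
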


%Let $\hat J_{i;n}^{\leqslant k} \triangleq \sum_{j=1}^{k} \bar Q_{n,j}^{(i)}\I_{[U_j^{(i)},1]}$.
Let $\hat J_{n}^{\leqslant k\,(i)} \triangleq \sum_{j=1}^{k} \bar Q_{n}^{(i)}(j)\I_{[U_j^{(i)},1]}$.
\begin{lemma}\label{lemma:multi-Jk-process}
$\big(\hat J_n^{\leqslant k\,(1)},\ldots,\hat J_n^{\leqslant k\,(d)}\big)$ satisfies the LDP in $\prod_{i=1}^{d}\mathbbm{D}\left([0,1],\R\right)$ with speed $L(n)n^\alpha$ and the rate function
\begin{align*}
I_{k}^d(\xi_1,...\xi_d)
&
\triangleq
\sum_{i=1}^d I_{k}(\xi_i)
=
\begin{cases}
\sum_{i=1}^{d}\sum_{t: \xi_i(t) \neq \xi_i(t-)}\left(\xi_i(t)-\xi_i(t-)\right)^{\alpha}
& \text{if }\ \xi_i \in \mathbbm{D}_{\leqslant k}\ \text{ for } \ i=1,\ldots,d,
\\
\infty, & otherwise.
\end{cases}
\end{align*}

\end{lemma}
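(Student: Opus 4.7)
The plan is to mimic the proof of Lemma~\ref{lemma:sample-path-ldp-for-J-hat}, upgrading the single-component argument to a product-space argument by exploiting the independence of $\hat J_n^{\leqslant k\,(i)}$ across $i$ and the separable structure of $\hat I_k^d$. Concretely, I would define the product map
\begin{equation*}
T_k^d\big(z^{(1)},\ldots,z^{(d)}\big) \triangleq \big(T_k(z^{(1)}),\ldots,T_k(z^{(d)})\big), \qquad T_k(x,u) = \sum_{j=1}^k x_j\I_{[u_j,1]},
\end{equation*}
from $\prod_{i=1}^d (\R_+^k\times[0,1]^k)$ to $\prod_{i=1}^d \D$, so that $\big(\hat J_n^{\leqslant k\,(1)},\ldots,\hat J_n^{\leqslant k\,(d)}\big) = T_k^d\big(Z_n^{(1)},\ldots,Z_n^{(d)}\big)$. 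The preceding corollary provides the LDP for $(Z_n^{(1)},\ldots,Z_n^{(d)})$ with speed $L(n)n^\alpha$ and rate $\hat I_k^d$, so the task reduces to a contraction-style argument through $T_k^d$.

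For the LDP upper bound, for any closed $F\subseteq \prod_{i=1}^d \D_{\leqslant k}$ the inclusion $(T_k^d)^{-1}(F)\subseteq (T_k^d)^{-1}(F)^-$ and the LDP upper bound for $(Z_n^{(1)},\ldots,Z_n^{(d)})$ give
\begin{equation*}
\limsup_{n\to\infty}\frac{\log \P\big(T_k^d(Z_n^{(1)},\ldots,Z_n^{(d)})\in F\big)}{L(n)n^\alpha} \leq -\inf_{(z^{(1)},\ldots,z^{(d)})\in (T_k^d)^{-1}(F)^-}\hat I_k^d.
\end{equation*}
Thus it suffices to show the two infima of $\hat I_k^d$ over $(T_k^d)^{-1}(F)^-$ and $(T_k^d)^{-1}(F)$ agree; this I would do by the coordinate-wise permutation argument of Lemma~\ref{lemma:sample-path-ldp-for-J-hat}, applied separately to each component $z^{(i)}$: any near-optimal point in the closure can be pushed to the set itself by sorting the jump sizes in each coordinate in decreasing order, which preserves $T_k^d$ and leaves $\hat I_k^d$ unchanged because $\hat I_k^d$ is a sum of terms each invariant under within-coordinate permutation.

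For the lower bound, for an open $G\subseteq \prod_{i=1}^d \D_{\leqslant k}$ the inclusion $(T_k^d)^{-1}(G)^\circ\subseteq (T_k^d)^{-1}(G)$ reduces the task to showing $\inf_{(T_k^d)^{-1}(G)^\circ}\hat I_k^d \leq \inf_{(T_k^d)^{-1}(G)}\hat I_k^d$. Given any $(z^{(1)},\ldots,z^{(d)})\in (T_k^d)^{-1}(G)$, I would perturb each coordinate independently as in the one-dimensional proof: within each $z^{(i)}$ replace any $u_j^{(i)}\in\{0,1\}$ (which must have $x_j^{(i)}=0$) by a fresh point in $(0,1)$, then merge any coincident times by transferring mass to a single jump and moving a collapsed jump to a fresh time point. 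This preserves $T_k^d(z^{(1)},\ldots,z^{(d)})$ and does not increase $\hat I_k^d$ (using sub-additivity of $x\mapsto x^\alpha$). At the resulting point $T_k^d$ is continuous in each coordinate, hence jointly continuous in the product topology, so the point lies in $(T_k^d)^{-1}(G)^\circ$.

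Finally, since $\prod_{i=1}^d\D_{\leqslant k}$ is closed in $\prod_{i=1}^d \D$ and $(\hat J_n^{\leqslant k\,(1)},\ldots,\hat J_n^{\leqslant k\,(d)})$ lies in it almost surely, while $I_k^d=\infty$ off of it, an application of Lemma~4.1.5 of \cite{dembo2010large} lifts the LDP on $\prod_{i=1}^d \D_{\leqslant k}$ to the one on $\prod_{i=1}^d \D$. The main obstacle is the same one encountered in the one-dimensional case: $T_k^d$ fails to be continuous at points with coincident jump times or jump times at the boundary $\{0,1\}$, so the classical contraction principle does not apply and one must argue directly via the closure/interior identities above; the work here lies in verifying that the one-dimensional coordinate-wise perturbation arguments extend cleanly, which they do because both $T_k^d$ and $\hat I_k^d$ decouple across $i$.
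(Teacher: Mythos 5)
Your proposal reproduces exactly the strategy the paper itself uses: it observes that $I_k^d$ is a sum of the lower semi-continuous one-dimensional rate functions, then mirrors the upper- and lower-bound arguments of the one-dimensional Lemma~\ref{lemma:sample-path-ldp-for-J-hat} coordinate-wise, exploiting that both $T_k^d$ and $\hat I_k^d$ factor across components, and finishes by restricting to the closed subspace $\prod_{i=1}^d\D_{\leqslant k}$ via Lemma~4.1.5 of \cite{dembo2010large}. This matches the paper's (largely deferred) proof in every essential detail.
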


\begin{proof}
Since $I_{k_i}$ is lower semi-continuous in $\prod_{i=1}^d \D([0,1],\R)$ for each $i$, $I_{k_1,\ldots,k_d}$ is a sum of lower semi-continuous functions, and hence, is lower semi-continuous itself.
The rest of the proof for the LDP upper bound and the lower bounds mirrors that of the one dimensional case (Lemma~\ref{lemma:sample-path-ldp-for-J-hat}) closely, and hence, omitted.
\end{proof}

\begin{proof}[Proof of Lemma~\ref{lemma:multi-d-equivalence-J}]
Again, we consider the same distributional relation for each coordinate as in the 1-dimensional case:
\begin{align*}
\bar J_n^{k\,(i)}
\stackrel{\mathcal D}{=}
\underbrace{\frac{1}{n} \sum_{j=1}^{k} Q_n^{(i)}(j)\I_{[U_j,1]}}_{= \hat{J}_n^{\leqslant k\,(i)}}-\underbrace{\frac{1}{n}\I\{\tilde N_n^{(i)}<k\}\sum_{j=\tilde N_n^{(i)}+1}^{k}Q_n^{(i)}(j)\I_{[U_j^{(i)},1]}}_{= \check{J}_n^{\leqslant k\,(i)}}.
\end{align*}
Note that this distributional equality holds jointly w.r.t.\ $i=1,\ldots,d$ due to the assumed independence.
Let $F$ be a closed set and write
\begin{align*}
&\P( (\bar{J}_n^{k\,(1)},\ldots,\bar{J}_n^{k\,(d)}) \in F )
\\
&
\leq
\P\Big((\hat{J}_n^{\leqslant k\,(1)},\ldots,\hat{J}_n^{\leqslant k\,(d)})\in F ,\ {\textstyle \sum_{i=1}^d}\I\{\tilde N_n^{(i)} < k\}=0\Big)
+\sum_{i=1}^d\P\big(\I\{N_n^{(i)}<k\} \neq 0\big)
\\
&
\leq
\P\Big((\hat{J}_n^{\leqslant k\,(1)},\ldots,\hat{J}_n^{\leqslant k\,(d)})\in F \Big)+\sum_{i=1}^d\P\big(\I\{N_n^{(i)}<k\} \neq 0\big).
%\leq
%\P(\hat{J}_n^{\leqslant k}\in F)+\sum_{j=0}^{k-1}e^{-\lambda n t}\frac{(\lambda n t)^j}{j!}.
\end{align*}
%Since $\limsup_{n\rightarrow \infty}\frac{1}{L(n)n^\alpha}\log\left(\sum_{j=0}^{k-1}e^{-\lambda n}\frac{(\lambda n)^j}{j!}\right)= -\infty$,
From Lemma~\ref{lemma:multi-Jk-process} and the principle of the largest term,
\begin{align*}
&\limsup_{n \rightarrow \infty}\frac{\log \P\big( (\bar{J}_n^{k\,(1)},\ldots,\bar{J}_n^{k\,(d)}) \in F \big)
}{L(n)n^\alpha}
\\
&
\leq
\limsup_{n \rightarrow \infty}\frac{\log \P\big((\hat{J}_n^{\leqslant k\,(1)},\ldots,\hat{J}_n^{\leqslant k\,(d)})\in F\big)}{L(n)n^\alpha}
\vee\max_{i=1,\ldots,d}
\limsup_{n \rightarrow \infty}\frac{\log \P\big(\tilde N_n^{(i)} < k\big)}{L(n)n^\alpha}
\\
&
\leq
-\inf_{(\xi_1,\ldots,\xi_d) \in F}I_k^d(\xi_1,\ldots,\xi_d).
\end{align*}
Turning to the lower bound, let $G$ be an open set.
Since the lower bound is trivial in case $\inf_{x\in G}I_k(x)=\infty$, we focus on the case $\inf_{x\in G}I_k(x)<\infty$.
In this case, using a reasoning similar to the one leading to \eqref{eq:lowerbound-because-ratio-zero},
\begin{align*}
&\liminf_{n\rightarrow \infty}\frac{\log\P((\bar{J}_n^{k\,(1)},\ldots,\bar{J}_n^{k\,(d)}) \in G) }{L(n)n^\alpha}
\\
&\geq
\liminf_{n\rightarrow \infty}\frac{\log\P\big((\bar{J}_n^{k\,(1)},\ldots,\bar{J}_n^{k\,(d)}) \in G,\,\sum_{i=1}^d \I\{\tilde N_n^{(i)} \geq k\}=0\big)}{L(n)n^\alpha}
\\
&
=
\liminf_{n\rightarrow \infty}\frac{\log\P\big((\hat{J}_n^{\leqslant k\,(1)},\ldots,\hat{J}_n^{\leqslant k\,(d)}) \in G,\,\sum_{i=1}^d \I\{\tilde N_n^{(i)}\geq k\} = 0\big)}{L(n)n^\alpha}
\\
&
\geq
\liminf_{n\rightarrow  \infty}\frac{1}{L(n)n^\alpha}\log\left(\P\big((\hat{J}_n^{\leqslant k\,(1)},\ldots,\hat{J}_n^{\leqslant k\,(d)}) \in G\big)-d\P(\tilde N_n^{(1)}< k)\right)
\\
&
=
\liminf_{n\rightarrow \infty}\frac{1}{L(n)n^\alpha}\log\P\big((\hat{J}_n^{\leqslant k\,(1)},\ldots,\hat{J}_n^{\leqslant k\,(d)}) \in G\big)
\\
&
\geq
-\inf_{(\xi_1,\ldots,\xi_d) \in G}I_k^d(\xi_1,\ldots,\xi_d).
\end{align*}

\end{proof}

The proof of Lemma~\ref{eq:extspldplbt} is completely analogous to the one-dimensional case, and therefore omitted.

\appendix

%%%%%%%%%%%%%%%%%%%%%%%%%%%%%%%%%%%%%%%%%%%%%%%%%%%%%%%%%%%%%%%%%%%%%%%%%%%%%%%%%%%%%%%%%%
%
%
% Section Appendix A
%
%
%%%%%%%%%%%%%%%%%%%%%%%%%%%%%%%%%%%%%%%%%%%%%%%%%%%%%%%%%%%%%%%%%%%%%%%%%%%%%%%%%%%%%%%%%%

\section{$M_1'$ topology and goodness of the rate function}\label{section:appendix-for-M_1p}
Let $\tilde {\mathbb D}[0,1]$ be the space of functions from $[0,1]$ to $\R$ such that the left limit exists at each $t\in (0,1]$, the right limit exists at each $t\in[0,1)$, and
\begin{equation}\label{condition:jump-consistency}
\xi(t) \in [\xi(t-)\wedge\xi(t+), \,\xi(t-)\vee\xi(t+)]
\end{equation}
for each $t\in[0,1]$ where we interpret $\xi(0-)$ as $0$ and $\xi(1+)$ as $\xi(1)$.
\begin{definition}
For $\xi\in \tilde \D$, define the extended completed graph $\Gamma'(\xi)$ of $\xi$ as
$$
\Gamma'(\xi) \triangleq \{(u,t)\in \R\times [0,1]: u\in [\xi(t-)\wedge \xi(t+),\ \xi(t-)\vee \xi(t+)]\},
$$
where $\xi(0-) \triangleq 0$ and $\xi(1+) \triangleq \xi(1)$.
Define an order on the graph $\Gamma'(\xi)$ by setting $(u_1,t_1) < (u_2,t_2)$ if either
\begin{itemize}
\item $t_1 < t_2$; or
\item $t_1 = t_2$ and $|\xi(t_1-)-u_1| < |\xi(t_2-) - u_2|$.
\end{itemize}
We call a continuous nondecreasing function $(u,t) =\big((u(s), t(s)),\,s \in [0,1]\big)$ from $[0,1]$ to $\R\times[0,1]$ a parametrization of $\Gamma'(\xi)$---or a parametrization of $\xi$---if $\Gamma'(\xi) = \{(u(s), t(s)): s\in [0,1]\}$.
%We call a pair of continuous functions $(u,t) \triangleq ((u(s), t(s)), s \in [0,1])$ such that $t(s)$ is nondecreasing with $t(0)= 0$ and $t(1)=1$ a parametrization of $\Gamma'(\xi)$ if $\Gamma'(\xi) = \{u(s), t(s)): s\in [0,1]\}$.
\end{definition}

\begin{definition}
Define the $M_1'$ metric on $\mathbb D$ as follows
$$
d_{M_1'}(\xi,\zeta)
\triangleq
\inf_{\substack{(u,t) \in \Gamma'(\xi)\\(v,r) \in \Gamma'(\zeta)}}
\{
\|u-v\|_\infty + \|t-r\|_\infty
\}.
$$
%and
%$$
%H(x_1,x_2,x_3) \triangleq 0\vee(x_2 - (x_1\vee x_3)) \vee ((x_1\wedge x_3) - x_2)
%$$
%i.e., $H(x_1,x_2,x_3)$ is the distance between $x_2$ and the segment joining $x_1$ and $x_3$.
%More specifically, $H(x_1,x_2,x_3) \triangleq d(x_2, \langle x_1, x_3 \rangle)$ where $d$ is the Euclidean metric on $\R$ and $\langle x_1, x_3\rangle \triangleq [x_1\wedge x_3,\ x_1\vee x_3]$.
\end{definition}

Let $\mathbb D^\uparrow \triangleq \{\xi\in \mathbb D:\ \xi \text{ is nondecreasing and }\xi(0) \geq 0\}$.
\begin{proposition}\label{thm:convergence}
Suppose that $\hat\xi_0\in\tilde \D$ with $\hat \xi_0(0) \geq 0$ and $\xi_n \in \mathbb D^\uparrow$ for each $n\geq 1$.
If
$T \triangleq \{t\in[0,1]:\xi_n(t)\to \hat\xi_0(t)\}$ is dense on $[0,1]$ and $1\in T$%	
, then $\xi_n\stackrel{M_1'}{\to}\xi_0\in \D^\uparrow$ where
$
\xi_0(t) \triangleq \lim_{s\downarrow t}\hat \xi_0(s)
$
for $t\in[0,1)$ and $\xi_0(1) \triangleq \hat \xi_0(1)$.
\end{proposition}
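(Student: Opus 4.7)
The plan is to first identify the candidate limit $\xi_0$ as an element of $\mathbb D^\uparrow$ and show that its extended completed graph coincides with $\Gamma'(\hat\xi_0)$, then produce, for each $\varepsilon>0$, parametrizations of $\xi_n$ and of $\hat\xi_0$ whose uniform distance is of order $\varepsilon$ for all sufficiently large $n$.

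First I would establish monotonicity of $\hat\xi_0$: for any $s<t$ both in $T$, $\hat\xi_0(s)=\lim_n\xi_n(s)\le\lim_n\xi_n(t)=\hat\xi_0(t)$ because each $\xi_n$ is nondecreasing, and density of $T$ together with the existence of one-sided limits for $\hat\xi_0$ extends the inequality to all of $[0,1]$. It follows that $\xi_0$, the right-continuous modification of $\hat\xi_0$ on $[0,1)$ with $\xi_0(1)=\hat\xi_0(1)$, is nondecreasing and cadlag with $\xi_0(0)=\hat\xi_0(0+)\ge\hat\xi_0(0)\ge 0$, hence $\xi_0\in\mathbb D^\uparrow$. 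Comparing one-sided limits at every $t\in[0,1]$, including the boundary conventions $\hat\xi_0(0-)=\xi_0(0-)=0$ and $\hat\xi_0(1+)=\xi_0(1+)=\xi_0(1)$, gives $\Gamma'(\hat\xi_0)=\Gamma'(\xi_0)$, so it suffices to exhibit parametrizations of $\Gamma'(\xi_n)$ and of $\Gamma'(\hat\xi_0)$ whose uniform distance vanishes.

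Fix $\varepsilon>0$. Since $\hat\xi_0$ is monotone its jump set is countable, so the continuity points of $\hat\xi_0$ lying in $T$ are dense in $[0,1]$. I would select a finite partition $0=s_0<s_1<\cdots<s_K=1$ with $s_{j+1}-s_j<\varepsilon$, with $s_K=1\in T$ and all interior $s_j\in T$ chosen as continuity points of $\hat\xi_0$, refined so that each jump of $\hat\xi_0$ of size exceeding $\varepsilon$ is contained in its own sub-interval $(s_j,s_{j+1})$ of length less than $\varepsilon$. I would then build parametrizations $(u^n,r^n)\in\Gamma'(\xi_n)$ and $(u^0,r^0)\in\Gamma'(\hat\xi_0)$ on a common parameter interval $[0,1]$ by fixing auxiliary values $0=\sigma_0<\sigma_1<\cdots<\sigma_K=1$ at which the two parametrizations are pinned to $(\xi_n(s_j),s_j)$ and $(\hat\xi_0(s_j),s_j)$ respectively, interpolating monotonically through the extended completed graphs in between (the very first piece traverses the vertical segment at $0$ forced by $\xi(0-)=0$). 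Monotonicity confines $r^n,r^0$ to $[s_j,s_{j+1}]$ on $[\sigma_j,\sigma_{j+1}]$, so $\|r^n-r^0\|_\infty<\varepsilon$; on the same sub-interval each of $u^n,u^0$ is sandwiched between its endpoint values, so combining with $\xi_n(s_j)\to\hat\xi_0(s_j)=\xi_0(s_j)$ at each $s_j\in T$ yields $\|u^n-u^0\|_\infty=O(\varepsilon)$ for all sufficiently large $n$. Letting $\varepsilon\downarrow 0$ finishes the argument.

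The main obstacle will be the simultaneous coordination of the partition points: they must lie in $T$ to exploit pointwise convergence, they must isolate the large jumps of $\hat\xi_0$ so that the synchronized endpoint values absorb the jump contributions to $\|u^n-u^0\|_\infty$, and they must control oscillation on the remaining sub-intervals. The peculiarity of $M_1'$ at $t=0$---the convention $\xi(0-)=0$ inserting a vertical segment into $\Gamma'(\xi)$---is precisely what explains why the hypothesis $1\in T$ but not $0\in T$ suffices: every parametrization is pinned to $(0,0)$ at $s=0$ regardless of $\xi(0)$, whereas the endpoint at $s=1$ must genuinely match between $\xi_n$ and $\xi_0$.
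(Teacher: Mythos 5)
Your broad outline—show $\Gamma'(\hat\xi_0)=\Gamma'(\xi_0)$, partition, pin compatible parametrizations at the partition points, and compare—is in the right spirit, and your observation about why $1\in T$ is needed while $0\in T$ is not is correct. However, the construction you propose partitions the \emph{time} interval $[0,1]$, and that choice creates a genuine gap at the heart of the estimate. Consider a sub-interval $(s_j,s_{j+1})$ that isolates a jump of $\hat\xi_0$ of size $c\gg\varepsilon$. Your bound on $\|u^n-u^0\|_\infty$ is obtained by sandwiching each of $u^n,u^0$ between its endpoint values on $[\sigma_j,\sigma_{j+1}]$. But those endpoint ranges have length roughly $c$: you can conclude only $|u^n(\sigma)-u^0(\sigma)| \le \hat\xi_0(s_{j+1})-\hat\xi_0(s_j)+O(\varepsilon)$, which is of order $c$, not $O(\varepsilon)$. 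To control the value coordinate across a large jump you must coordinate the \emph{traversal} of the vertical segment, i.e.\ subdivide the jump itself, and a time-partition cannot do that. This is exactly why the paper's proof partitions the \emph{parameter} domain of a fixed parametrization $(x,t)$ of $\Gamma'(\hat\xi_0)$ and imposes both $t(s_i)-t(s_{i-1})<\epsilon/4$ \emph{and} $x(s_i)-x(s_{i-1})<\epsilon/8$: a large jump becomes a long segment with $t$ constant and $x$ strictly increasing, so it is split into many short pieces, and the pinning points then control both coordinates uniformly.

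A second, smaller gap: you assert that the continuity points of $\hat\xi_0$ lying in $T$ are dense, but $T$ is only assumed dense and could be a countable set contained entirely in the (countable, potentially dense) jump set of $\hat\xi_0$. So you cannot in general choose all interior partition points to be continuity points in $T$. The paper avoids this by not requiring partition points in time to lie in $T$ at all; instead it carefully arranges (conditions (S4)--(S6)) that whenever $t(\cdot)$ is locally strictly increasing the corresponding time lies in $T$, and when $t(\cdot)$ is locally flat (a jump) the flanking times on either side lie in $T$, which is always achievable by density plus continuity of $t$.
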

\begin{proof}
%Note first that $\xi_0(0) \geq 0$ due to the right continuity of $\xi_0$ and the pointwise convergence of $\xi_n$ to $\xi_0$ in arbitrary vicinity of 0.
It is easy to check that $\hat\xi_0$ has to be non-negative and non-decreasing, and for such $\hat\xi_0$, $\xi_0$ should be in $\D^\uparrow$.
Let $(x,t)$ be a parametrization of $\Gamma'(\hat\xi_0)$, and let $\epsilon>0$ be given.
%arbitrary but small enough so that
%$\epsilon < \hat\xi_0(0)$ in case $\hat\xi_0(0)\neq 0$,
%and
%$\epsilon<\hat\xi_0(1)-\hat\xi_0(1-)$ as well in case $\hat\xi_0(1)\neq \hat\xi_0(1-)$.
Note that $\Gamma'(\xi_0)$ and $\Gamma'(\hat\xi_0)$ coincide.
Therefore, the proposition is proved if we show that there exists an integer $N_0$ such that for each $n\geq N_0$, $\Gamma'(\xi_n)$ can be parametrized by some $(y, r)$  such that
\begin{equation}\label{eq:want-to-construct}
\|x-y\|_\infty + \|t-r\|_\infty \leq \epsilon.
\end{equation}
We start with making an observation that one can always construct a finite number of points $S=\{s_i\}_{i=0,1,\ldots,m} \subseteq [0,1]$ such that
\begin{itemize}
\item[(S1)] $0 = s_0<s_1<\cdots<s_m=1$;
%\item[(S2)] either $\xi(t(s_i)) = \xi(t(s_i)-)$ or $|\xi(t(s_i)) - \xi(t(s_i)-)|\geq \epsilon$; moreover, $t(s_i) \in T$ in case $\xi(t(s_i)) = \xi(t(s_i)-)$

\item[(S2)] $t(s_i) - t(s_{i-1}) < \epsilon/4$ for $i=1,\ldots,m$;

\item[(S3)] $x(s_i) - x(s_{i-1}) < \epsilon/8$ for $i=1,\ldots,m$;

\item[(S4)] if $t(s_{k-1}) < t(s_k) < t(s_{k+1})$ then $t(s_k)\in T$;

\item[(S5)] if $t(s_{k-1}) < t(s_k) = t(s_{k+1})$, then $t(s_{k-1})\in T$; if, in addition, $k-1>0$, then $t(s_{k-2}) < t(s_{k-1})$;

\item[(S6)] if $t(s_{k-1}) = t(s_{k}) < t(s_{k+1})$, then $t(s_{k+1})\in T$; if, in addition, $k+1<m$, then $t(s_{k+1})<t(s_{k+2})$.

\end{itemize}
One way to construct such a set is to start with $S$ such that (S1), (S2), and (S3) are satisfied.
This is always possible because $x$ and $t$ are continuous and non-decreasing.
Suppose that (S4) is violated for some three consecutive points in $S$, say $s_{k-1}$, $s_k$, $s_{k+1}$.
We argue that it is always possible to eliminate this violation by either adding an additional point $\hat s_k$ or moving $s_k$ slightly.
More specifically, if there exists $\hat s_k\in (s_{k-1}, s_{k+1})\setminus \{s_k\}$ such that $t(\hat s_k) = t(s_k)$, add $\hat s_k$ to $S$.
If there is no such $\hat s_k$, $t(\cdot)$ has to be strictly increasing at $s_k$, and hence, from the continuity of $x$ and $t$ along with the fact that $T$ is dense, we can deduce that there has to be $\tilde s_k\in(s_{k-1}, s_{k+1})$ such that $t(\tilde s_k)\in T$ and $|t(\tilde s_k) - t(s_k)|$ and $|x(\tilde s_k) - x(s_k)|$ are small enough so that (S2) and (S3) are still satisfied when we replace $s_k$ with $\tilde s_k$ in $S$.
Iterating this procedure, we can construct $S$ so that (S1)-(S4) are satisfied.
Now turning to (S5),
suppose that it is violated for three consecutive points $s_{k-1}$, $s_k$, $s_{k+1}$ in $S$.
Since $T$ is dense and $t$ is continuous, one can find $\hat s_k$ between $s_{k-1}$ and $s_k$ such that $t(s_{k-1})< t(\hat s_k) < t(s_{k})$ and $t(\hat s_k) \in T$.
Note that after adding $\hat s_k$ to $S$, (S2), (S3), and (S4) should still hold while the number of triplets that violate (S5) is reduced by one.
Repeating this procedure for each triplet that violates (S5), one can construct a new $S$ which satisfies (S1)-(S5).
One can also check that the same procedure for the triplets that violate (S6) can reduce the number of triplets that violate (S6) while not introducing any new violation for (S2), (S3), (S4), and (S5).
Therefore, $S$ can be augmented so that the resulting finite set satisfies (S6) as well.
Set $\hat S\triangleq\{s_i\in S:\, t(s_i) \in T, \ t(s_{i-1})<t(s_{i}) \text{ in case }i>0, \ t(s_i)<t(s_{i+1}) \text{ in case }i<m\}$
\cf{
Note that
(S4) implies $s_k\in \check S$,
(S5) implies $s_{k-1}\in \check S$,
(S6) implies $s_{k+1}\in \check S$.
}%
and
let $N_0$ be such that $n \geq N_0$ implies $|\xi_n(t(s_i)) - \hat\xi_0 (t(s_i))| < \epsilon /8$ for all $s_i\in \hat S$.
Now we will fix $n\geq N_0$ and proceed to showing that we can re-parametrize an arbitrary parametrization $(y',r')$  of $\Gamma(\xi_n)$ to obtain a new parametrization $(y,r)$ such that (\ref{eq:want-to-construct}) is satisfied.
Let $(y', r')$ be an arbitrary parametrization  of $\Gamma(\xi_n)$.
For each $i$ such that $s_i\in \hat S$, let $s_i'\triangleq \max\{s\geq 0:  r'(s) = t(s_i)\}$
so that $r'(s'_i) = t(s_i)$ and $\xi_n(r'(s'_i)) =y'(s'_i)$.
For $i$'s such that $s_i\in S\setminus \hat S$,
note that there are three possible cases: $t(s_i) \in (0,1)$, $t(s_i) = 0$, and $t(s_i) = 1$.
Since the other cases can be handled in similar (but simpler) manners, we focus on the case $t(s_i) \in (0,1)$. % and omit the discussion for the other two cases.
In this case, one can check that there exist $k$ and $j$ such that $k\leq i \leq k+j$, $t(s_{k-1}) < t(s_{k}) = t(s_{k+j}) < t(s_{k+j+1})$, and $s_{k-1}, s_{k+j+1}\in \hat S$.
Here we assume that $k>1$; the case $k=1$ is essentially identical but simpler---hence omitted.
Note that from the monotonicity of $\hat \xi_0$ and \eqref{condition:jump-consistency},
$$
x(s_{k-2})\leq \hat\xi_0(t(s_{k-2})+)\leq \hat\xi_0(t(s_{k-1})-)\leq \hat \xi_0(t(s_{k-1})) \leq\hat\xi_0(t(s_{k-1})+)  \leq \hat\xi_0(t(s_k)-) \leq x(s_k),
$$
i.e., $\hat \xi_0(t(s_{k-1})) \in [x(s_{k-2}), x(s_k)]$,
which along with (S3) implies
$|\hat \xi_0(t(s_{k-1})) - x(s_{k-1})| < \epsilon/8$.
From this, (S5), and the constructions of $s'_{k-1}$ and $N_0$,
\begin{align*}
|y'(s'_{k-1}) - x(s_{k-1})|
&
=
|\xi_n(r'(s'_{k-1}))-x(s_{k-1})|
\\
&
= |\xi_n(r'(s'_{k-1}))- \hat\xi_0(t(s_{k-1}))| + |\hat\xi_0(t(s_{k-1}))-x(s_{k-1})|
\\
&
=|\xi_n(t(s_{k-1}))- \hat\xi_0(t(s_{k-1}))| + |\hat\xi_0(t(s_{k-1}))-x(s_{k-1})|
< \epsilon / 4.
\end{align*}
Following the same line of reasoning, we can show that
\cf{
\begin{align*}
x(s_{k+j})
\leq \hat\xi_0(t(s_{k+j})+)
&
\leq \hat\xi_0(t(s_{k+j+1})-)
\\
&\leq \hat \xi_0(t(s_{k+j+1}))
\\
&
\leq\hat\xi_0(t(s_{k+j+1})+)  \leq \hat\xi_0(t(s_{k+j+2})-) \leq x(s_{k+j+2}),
\end{align*}
i.e.,
$\hat \xi_0(t(s_{k+j+1})) \in [x(s_{k+j}), x(s_{k+j+2})]$, which implies
$|\hat \xi_0(t(s_{k+j+1})) - x(s_{k+j+1})| < \epsilon/8$, and hence
\begin{align*}
|y'(s'_{k+j+1}) - x(s_{k+j+1})|
&
=
|\xi_n(r'(s'_{k+j+1}))-x(s_{k+j+1})|
\\
&
= |\xi_n(r'(s'_{k+j+1}))- \hat\xi_0(t(s_{k+j+1}))| + |\hat\xi_0(t(s_{k+j+1}))-x(s_{k+j+1})|
\\
&
=|\xi_n(t(s_{k+j+1}))- \hat\xi_0(t(s_{k+j+1}))| + |\hat\xi_0(t(s_{k+j+1}))-x(s_{k+j+1})|
< \epsilon / 4.
\end{align*}
}%
$|y'(s'_{k+j+1}) - x(s_{k+j+1})| < \epsilon / 4$.
Noting that both $x$ and $y'$ are nondecreasing, there have to exist
$s_{k}', s_{k+1}', \ldots,  s_{k+j}'$
such that
$s_{k-1}'<s_{k}'< \cdots< s_{k+j}'<s_{k+j+1}'$
and
$|y'(s_l')-x(s_l)|< \epsilon/4$
for $l=k,k+1,\ldots,k+j$.
Note also that from (S2),
\begin{align*}
t(s_{l})-\epsilon/4 
&= t(s_{k})-\epsilon/4 < t(s_{k-1}) = r'(s'_{k-1}) \leq r'(s'_l) \leq r'(s'_{k+j+1}) = t(s_{k+j+1})
< t(s_{k+j})+\epsilon/4 
\\
&
= t(s_{l})+\epsilon/4,
\end{align*}
and hence,
$|r'(s'_l) - t(s_l)|< \epsilon/4$ for $l=k,\ldots,k+j$ as well.
%In other cases where $t(s_i)= 0$ or $t(s_i) = 1$, one can follow a similar (but simpler) procedure to construct $s'_i$.
Repeating this procedure for the $i$'s for which $s'_i$ is not designated until there is no such $i$'s are left, we can construct $s_1',\ldots, s_m'$ in such a way that
\begin{align*}
&|y'(s_i')-x(s_i)|< \epsilon/4
&
\text{and}
&
&
|r'(s_i') - t(s_i)|< \epsilon/4
\end{align*}
for all $i$'s.
Now, define a (piecewise linear) map $\lambda:[0,1]\to[0,1]$ by setting $\lambda(s_i) = s_i'$ at each $s_i$'s and interpolating $(s_i, s_i')$'s in between.
Then, $y\triangleq y'\circ \lambda$ and $r\triangleq r'\circ \lambda$ consist a parametrization $(y,r)$ of $\Gamma(\xi_n)$ such that $|x(s_i) - y(s_i)|<\epsilon/4$  and $|t(s_i) - r(s_i)| < \epsilon/4$ for each $i=1,\ldots, m$.
Due to the monotonicity of $x$, $y$, $t$, and $r$ along with (S2) and (S3), we conclude that $\|y-x\|_\infty< \epsilon/2$ and $\|t-r\|_\infty<\epsilon/2$, proving (\ref{eq:want-to-construct}).
\end{proof}

\begin{proposition}\label{thm:relative_compactness}
Let $K$ be a subset of $\mathbb D^\uparrow$.
If
%\begin{itemize}
%	\item
	$M \triangleq \sup_{\xi\in K} \|\xi\|_\infty < \infty$
%	\item $\lim_{c\to 0} \sup_{\xi\in K} \Delta(c; \xi)= 0$
%\end{itemize}
then $K$ is relatively compact w.r.t.\ the $M_1'$ topology.
\end{proposition}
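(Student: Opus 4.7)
The plan is to reduce the claim to Proposition~\ref{thm:convergence} via Helly's selection theorem. Given any sequence $\{\xi_n\}\subseteq K$, each $\xi_n$ is non-decreasing with $0\leq \xi_n(t)\leq M$ on $[0,1]$. Helly's selection theorem therefore yields a subsequence $\{\xi_{n_k}\}$ and a non-decreasing function $\hat\xi_0$, defined on any prescribed countable dense set $T_0\subseteq[0,1]$, such that $\xi_{n_k}(t)\to \hat\xi_0(t)$ for every $t\in T_0$. I would choose $T_0$ to contain $0$; since $\xi_n(1)$ is bounded by $M$, a further diagonal extraction guarantees $\xi_{n_k}(1)$ also converges, and I append this limit as the value $\hat\xi_0(1)$, so $1\in T$.

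Next I would extend $\hat\xi_0$ from $T_0\cup\{1\}$ to $[0,1]$ so that the extension lies in $\tilde{\mathbb D}$. Since the original function on $T_0$ is non-decreasing and bounded, one can set
\[
\hat\xi_0(t)\triangleq\lim_{\substack{s\downarrow t\\ s\in T_0}}\hat\xi_0(s)\qquad \text{for }t\in[0,1)\setminus T_0,
\]
which is well-defined and yields a non-decreasing function on $[0,1]$. Any non-decreasing bounded function possesses left and right limits everywhere, and automatically satisfies the intermediate condition $\hat\xi_0(t)\in[\hat\xi_0(t-)\wedge \hat\xi_0(t+),\,\hat\xi_0(t-)\vee \hat\xi_0(t+)]$ required for membership in $\tilde{\mathbb D}$. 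Moreover, $\hat\xi_0(0)\geq 0$ because the pointwise approximants $\xi_{n_k}(0)$ are all non-negative.

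With all hypotheses of Proposition~\ref{thm:convergence} in hand, I would invoke it to conclude that $\xi_{n_k}\stackrel{M_1'}{\to}\xi_0\in\mathbb D^\uparrow$, where $\xi_0(t)\triangleq\lim_{s\downarrow t}\hat\xi_0(s)$ for $t<1$ and $\xi_0(1)\triangleq\hat\xi_0(1)$. Since an arbitrary sequence in $K$ admits an $M_1'$-convergent subsequence, $K$ is relatively compact in $(\mathbb D,\mathcal T_{M_1'})$.

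The main obstacle is the bookkeeping around the extension of $\hat\xi_0$ to all of $[0,1]$ so that the convergence set $T$ of Proposition~\ref{thm:convergence} is still dense and contains $1$; this is purely technical and handled by the choice of $T_0$ together with a single additional diagonal extraction for the endpoint. Once the hypotheses of Proposition~\ref{thm:convergence} are verified, the compactness statement follows immediately.
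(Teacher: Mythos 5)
Your proof is correct and follows essentially the same route as the paper: extract a subsequence converging pointwise on a countable dense set containing~$1$, extend the limit to an element of $\tilde{\mathbb D}$ by right limits, and invoke Proposition~\ref{thm:convergence}. The only difference is that you cite Helly's selection theorem where the paper unfolds the diagonal extraction and proves the existence of one-sided limits by hand (via a contradiction with monotonicity and boundedness); this is a cosmetic streamlining rather than a different argument.
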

\begin{proof}
Let $\{\xi_n\}_{n=1,2,\ldots}$ be a sequence in $K$. We prove that there exists a subsequence $\{\xi_{n_k}\}_{k=1,2,\ldots}$ and $\xi_0 \in \mathbb D$ such that $\xi_{n_k} \stackrel{M_1'}{\to} \xi_0$ as $k\to\infty$.
Let $T\triangleq \{t_n\}_{n=1,2,\ldots}$ be a dense subset of $[0,1]$ such that $1\in T$.
By the assumption, $\sup_{n=1,2,\ldots}|\xi_n(t_1)| < M$, and hence there is a subsequence $\{n_k^{(1)}\}_{k=1,2,\ldots}$ of $\{1,2,\ldots\}$ such that $\xi_{n_k^{(1)}}(t_1)$ converges to a real number $x_1 \in [-M, M]$.
For each $i\geq 1$, given $\{n_k^{(i)}\}$, one can find a further subsequence $\{n_k^{(i+1)}\}_{k=1,2,\ldots}$ of $\{n_k^{(i)}\}_{k=1,2,\ldots}$ in such a way that $\xi_{n_k^{(i+1)}}(t_{i+1})$ converges to a real number $x_{i+1}$.
Let $n_k \triangleq n_k^{(k)}$ for each $k=1,2,\ldots$.
Then, $\xi_{n_k}(t_i) \to x_i$ as $k\to\infty$ for each $i=1,2,\ldots$.
Define a function $\hat\xi_0:T\to\R$ on $T$ so that $\hat\xi_0(t_i) = x_i$.
We claim that $\hat \xi_0$ has left limit everywhere; more precisely, we claim that for each $s\in (0,1]$, if a sequence $\{s_n\}\subseteq T\cap [0,s)$ is such that $s_n\to s$ as $n\to\infty$, then $\hat\xi_0(s_n)$ converges as $n\to\infty$.
(With a similar argument, one can show that $\hat\xi_0$ has right limit everywhere---i.e., for each $s\in [0,1)$, if a sequence $\{s_n\}\subseteq T\cap (s,1]$ is such that $s_n\to s$ as $n\to\infty$, then $\hat\xi_0(s_n)$ converges as $n\to\infty$.)
To prove this claim, we proceed with proof by contradiction; suppose that the conclusion of the claim is not true---i.e., $\hat\xi_0(s_n)$ is not convergent.
Then, there exist a $\epsilon>0$ and a subsequence $r_n$ of $s_n$ such that
\begin{equation}\label{ineq:xi-hat-naught-are-far}
|\hat\xi_0(r_{n+1}) - \hat\xi_0(r_n)| > \epsilon.
\end{equation}
	\cf{
	In general, if a sequence $\{y_n\}$ of real numbers do not converge, then there has to be a positive constant $\epsilon>0$ and a subsequence $\{z_n\}$ such that $|z_n-z_{n+1}| > \epsilon$ for all $n\geq 1$.\\
	
	To see this, note that $z_n$ is not a Cauchy sequence, and hence, there has to be an $\epsilon>0$ and a subsequence $y'_n$ of $y_n$ such that $|y'_{2n-1}-y'_{2n}| > 2\epsilon$ for $n=1,2,\ldots$.
	Now we construct $z_n$ as follows.
	Let $z_1 = y'_2$.
	\begin{itemize}
	\item if $|y'_2-y'_3| > \epsilon$
	then set $z_2 = y'_3$ and $z_3 = y'_4$;
	\item if $|y'_2-y'_3|\leq \epsilon$,
	then, by the construction of $\{y'_n\}$ and the triangular inequality, $|y'_2 - y'_4| > \epsilon$; therefore, set $z_2 = y'_4$.
	\end{itemize}
	Repeat.
	}
Note that since $\hat\xi_0$ is a pointwise limit of nondecreasing functions $\{\xi_{n_k}\}$ (restricted on $T$),
\begin{itemize}
\item
$\hat \xi_0$ is also nondecreasing on $T$, \hfill(monotonicity)
\item
$\sup_{t\in T}|\hat \xi_0(t)| < M$. \hfill(boundedness)
\end{itemize}
However, these two are contradictory to each other since the monotonicity together with (\ref{ineq:xi-hat-naught-are-far}) implies $\hat \xi_0(r_{N_0+j})> \hat \xi_0(r_{N_0}) + j\epsilon$, which leads to the contradiction to the boundedness for a large enough $j$.
This proves the claim.

Note that the above claim means that $\hat \xi_0$ has both left  and right limit at each point of $T\cap(0,1)$, and due to the monotonicity, the function value has to be between the left limit and the right limit.
Since $T$ is dense in $[0,1]$, we can extend $\hat \xi$ from $T$ to $[0,1]$ by setting $\hat \xi_0(t) \triangleq \lim_{\substack{t_i\to t\\ t_i > t}} \hat \xi_0(t_i)$ for $t\in [0,1]\setminus T$.
Note that such $\hat\xi_0$ is an element of $\tilde \D$ and satisfies the conditions of Proposition~\ref{thm:convergence}.
We therefore conclude that $\xi_{n_k}\to \xi_0\in \D^\uparrow$ in $M_1'$ as $k\to \infty$, where
$
\xi_0(t) \triangleq \lim_{s\downarrow t}\hat \xi_0(s)
$
for $t\in[0,1)$ and $\xi_0(1) \triangleq \hat \xi_0(1)$.
This proves that $K$ is indeed relatively compact.
\end{proof}
Recall that our rate function for one-sided compound Poisson processes is as follows:
$$
I_{M_1'}(\xi) =
\begin{cases}
\sum_{t\in[0,1]} \big(\xi(t) - \xi(t-)\big)^\alpha
& \text{if $\xi$ is a non-decreasing pure jump function with $\xi(0)\geq 0$},
\\
\infty
& \text{otherwise.}
\end{cases}
$$
\begin{proposition}\label{thm:M_1p-goodness}
$I_{M_1'}$ is a good rate function w.r.t.\ the $M_1'$ topology.
\end{proposition}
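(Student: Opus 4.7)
The plan is to establish the two ingredients that together force each sublevel set $\Psi_{I_{M_1'}}(\gamma)=\{\xi:I_{M_1'}(\xi)\le\gamma\}$ to be compact in the $M_1'$ topology, namely relative compactness and closedness (equivalently, lower semi-continuity of $I_{M_1'}$). Relative compactness will be reduced to a boundedness estimate and an appeal to Proposition~\ref{thm:relative_compactness}; lower semi-continuity will be extracted from the behavior of the $M_1'$-parametrizations of completed graphs together with the concavity of $x\mapsto x^{\alpha}$.

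For relative compactness, I would argue that any $\xi\in\Psi_{I_{M_1'}}(\gamma)$ is a non-decreasing pure jump function with $\xi(0)\ge 0$. Writing the collection of jumps (including the initial jump of size $\xi(0)$ with the convention $\xi(0-)=0$) as $\{x_i\}$, we have $\sum_i x_i^{\alpha}\le\gamma$, hence $x_i^{\alpha}\le\gamma$ and $x_i^{1-\alpha}\le\gamma^{(1-\alpha)/\alpha}$, so
$$\|\xi\|_\infty=\xi(1)=\sum_i x_i=\sum_i x_i^{\alpha}\,x_i^{1-\alpha}\le\gamma^{1/\alpha}.$$
Because $\Psi_{I_{M_1'}}(\gamma)\subseteq\mathbb D^\uparrow$ is thus bounded in sup norm, Proposition~\ref{thm:relative_compactness} gives relative compactness.

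For lower semi-continuity, I would take $\xi_n\to\xi_0$ in $M_1'$ with $I_{M_1'}(\xi_n)\le\gamma$ and show $\xi_0$ is non-decreasing with $\xi_0(0)\ge0$ (these properties transfer through any parametrization of the extended completed graphs that witnesses $M_1'$-convergence), a pure jump function, and satisfies $\sum_t(\xi_0(t)-\xi_0(t-))^\alpha\le\gamma$. The pure-jump claim would be proved by contradiction: if $\xi_0$ had a continuous increase of size $c>0$ on some sub-interval, the $M_1'$-convergence would force each $\xi_n$ to realize a net increase close to $c$ over a corresponding parameter interval using jumps of size at most $O(\epsilon)$ (for $\epsilon$ arbitrarily small), and subadditivity of $x\mapsto x^\alpha$ would yield $I_{M_1'}(\xi_n)\gtrsim c\,\epsilon^{\alpha-1}\to\infty$, contradicting the uniform bound $\gamma$. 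To bound the jump-sum, I would fix $K$ and enumerate the $K$ largest jumps $(y_k,t_k)$ of $\xi_0$; using parametrizations $(u,t)\in\Gamma'(\xi_0)$ and $(u_n,t_n)\in\Gamma'(\xi_n)$ with $\|u-u_n\|_\infty+\|t-t_n\|_\infty\to0$, each $y_k$ corresponds to a parameter interval on which $u$ is flat (in $t$) and on which $u_n$ must accumulate, via jumps of $\xi_n$, an increment tending to $y_k$; choosing these parameter intervals mutually disjoint for different $k$, concavity gives $\sum_j x_{n,k,j}^\alpha\ge\bigl(\sum_j x_{n,k,j}\bigr)^\alpha$, and summing over $k\le K$ then letting $n\to\infty$ and $K\to\infty$ yields $\sum_k y_k^\alpha\le\gamma$.

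The main obstacle will be the combinatorial bookkeeping in the last step: one must use the $M_1'$-parametrizations carefully to produce, for each $n$, collections of jumps of $\xi_n$ that (i) lie in disjoint parameter windows associated to different $t_k$'s and (ii) have total size converging to $y_k$. Everything else is either a routine calculation or a direct application of Proposition~\ref{thm:convergence} and Proposition~\ref{thm:relative_compactness}.
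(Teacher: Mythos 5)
Your proposal is correct and follows essentially the same route as the paper: boundedness of sublevel sets plus Proposition~\ref{thm:relative_compactness} gives relative compactness, and closedness is obtained by controlling jump contributions through $M_1'$-parametrizations of extended completed graphs together with subadditivity of $x\mapsto x^\alpha$. The only presentational difference is that you prove closedness sequentially (lower semi-continuity along a convergent sequence), whereas the paper shows directly that the complement of each sublevel set is open via a case analysis ($\xi(0)<0$, not non-decreasing, not pure jump, jump-sum too large); the underlying estimates are the same.
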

\begin{proof}
In view of Proposition~\ref{thm:relative_compactness}, it is enough to show that the sublevel sets of $I_{M_1'}$ are closed.
Let $a$ be an arbitrary finite constant, and consider the sublevel set $\Psi_{I_{M_1'}}(a)\triangleq \{\xi\in\mathbb D: I_{M_1'}(\xi)\leq a\}$.
Let $\xi^c\in \mathbb D$ be any given path that does not belong to $\Psi_{I_{M_1'}}(a)$.
We will show that there exists $\epsilon>0$ such that $d_{M_1'}\big(\xi^c, \Psi_{I_{M_1'}}(a)\big) \geq \epsilon$.
Note that $\Psi_{I_{M_1'}}(a)^\stcomp = A\cup B\cup C\cup D$ where
\begin{align*}
A
&=\{\xi\in\mathbb D: \xi(0) < 0\},
\\
B
&=\{\xi\in\mathbb D: \xi \text{ is not a non-decreasing function}\},
\\
C
&=\{\xi\in\mathbb D: \xi \text{ is non-decreasing but not a pure jump function}\},
\\
D
&=\{\xi\in \mathbb D: \xi \text{ is a non-decreasing pure jump function with }\xi(0)\geq 0 \text{ and } \sum_{t\in[0,1]}\big(\xi(t)-\xi(t-)\big)^\alpha > a\}.
\end{align*}
In each case, we will show that $\xi^c$ is bounded away from $\Psi_{I_{M_1'}}(a)$.
In case $\xi^c \in A$, note that for any parametrization $(x,t)$ of $\xi^c$, there has to be $s^*\in[0,1]$ such that $x(s^*) = \xi^c(0)<0$.
On the other hand, $y(s) \geq 0$ for all $s\in[0,1]$ for any parametrization $(y,r)$ of $\zeta$ such that $\zeta \in \Psi_I(a)$, and hence, $\|x-y\|_\infty\geq \xi^c(0)$.
Therefore,
$$
d_{M_1'}(\xi^c, \zeta) \geq \inf_{\substack{(x,t)\in\Gamma(\xi^c)\\(y,r)\in \Gamma(\zeta)}}\|x- y\|_\infty \geq |\xi^c(0)|.
$$
Since $\zeta$ was an arbitrary element of $\Psi_{I_{M_1'}}(a)$, we conclude that $d_{M_1'}(\xi^c, \Psi_I(a)) \geq \epsilon$ with $\epsilon = |\xi^c(0)|$.

Using a similar argument, it is straightforward to show that any $\xi^c \in B$ is bounded away from $\Psi_I(a)^c$.

If $\xi^c \in C$, there has to be $T_s$ and $T_t$ such that $0\leq T_s < T_t\leq 1$, $\xi^c$ is continuous on $[T_s, T_t]$, and $c\triangleq  \xi^c(T_t)-\xi^c(T_s) > 0$.
Pick a small enough $\epsilon\in(0,1)$ so that
\begin{equation}\label{eq:M_1-prime-rate-function-choice-of-epsilon}
(4\epsilon)^{\alpha-1}(c-5\epsilon) > a.
\end{equation}
Note that since $\xi^c$ is uniformly continuous on $[T_s,T_t]$, there exists $\delta>0$ such that $|\xi^c(t) - \xi^c(s)|<\epsilon$ if $|t-s|\leq \delta$.
In particular, we pick $\delta$ so that $\delta < \epsilon$ and $T_s + \delta < T_t - \delta$.
We claim that
$$
d_{M_1'}(\Psi_{I_{M_1'}}(a),\xi^c) \geq \delta.
$$
Suppose not.
That is, there exists $\zeta \in \Psi_{I_{M_1'}}(a)$ such that $d_{M_1'}(\zeta, \xi^c) < \delta$.
%Note first that $\zeta$ is a pure jump function.
Let $(x,t)\in \Gamma(\xi^c)$ and $(y,r)\in\Gamma(\zeta)$ be the parametrizations of $\xi^c$ and $\zeta$, respectively, such that $\|x-y\|_\infty + \|t - r\|_\infty < \delta$.
Since $I_{M_1'}(\zeta) \leq a <\infty$, one can find a finite set $K\subseteq \{t \in [0,1]: \zeta(t)-\zeta(t-) > 0\}$ of jump times of $\zeta$ in such a way that $\sum_{t\notin K} \big(\zeta(t)-\zeta(t-)\big)^\alpha < \epsilon$.
Note that since $\epsilon \in (0,1)$, this implies that $\sum_{t\notin K} \big(\zeta(t)-\zeta(t-)\big) < \epsilon$.
Let $T_1,\ldots,T_k$ denote (the totality of) the jump times of $\zeta$ in $K\cap (T_s + \delta, T_t - \delta]$, and let $T_0 \triangleq T_s + \delta$ and $T_{k+1} \triangleq T_t-\delta$.
That is, $\{T_1,\ldots, T_k\} = K\cap (T_s + \delta, T_t - \delta] = K\cap (T_0, T_{k+1}]$.
Note that
\begin{itemize}
\item There exist $s_0$ and $s_{k+1}$ in $[0,1]$ such that
$$
y(s_{0}) = \zeta(T_{0}), \qquad
r(s_{0}) = T_{0}, \qquad
y(s_{k+1}) = \zeta(T_{k+1}), \qquad
r(s_{k+1}) = T_{k+1}.
$$
\item For each $i=1,\ldots, k$, there exists $s_i^+$ and $s_i^-$ such that
$$
r(s_i^+) = r(s_i^-) = T_i,\qquad
y(s_i^+) = \zeta(T_i),\qquad
y(s_i^-) = \zeta(T_i-).
$$
\end{itemize}
Since $t(s_{k+1}) \in [r(s_{k+1})-\delta, r(s_{k+1}) + \delta] \subseteq [T_s,T_t]$, and $\xi^c$ is continuous on $[T_s,T_t]$ and non-decreasing,
\begin{align*}
y(s_{k+1}) 
&\geq x(s_{k+1}) - \delta = \xi^c(t(s_{k+1})) - \delta \geq \xi^c(r(s_{k+1})-\delta)-\delta
= \xi^c(T_{k+1} - \delta)-\delta 
\\
&
\geq \xi^c(T_{k+1}) - \epsilon - \delta 
\geq \xi^c(T_{k+1}) - 2\epsilon
.
\end{align*}
Similarly,
$$
y(s_0) \leq x(s_{0}) + \delta = \xi^c(t(s_{0})) + \delta \leq \xi^c(r(s_{0})+\delta)+\delta
= \xi^c(T_0 + \delta)+\delta \leq \xi^c(T_0) + \epsilon + \delta \leq \xi^c(T_0) + 2\epsilon
.
$$
Subtracting the two equations,
\begin{equation*}
y(s_{k+1}) - y(s_0) \geq \xi^c(T_{k+1}) - \xi^c(T_0) -4\epsilon = c - 4\epsilon.
\end{equation*}
Note that
\begin{align}\label{eq:M_1-prime-rate-function-on-the-one-hand}
\sum_{i=1}^k \big(\zeta(T_i)-\zeta(T_i-)\big)
&
=
\zeta(T_{k+1}) - \zeta(T_0) - \sum_{t\in (T_0,T_{k+1}]\cap K^\stcomp} (\zeta(t) - \zeta(t-)) \geq \zeta(T_{k+1}) - \zeta(T_0) - \epsilon
\nonumber
\\
&
=y(s_{k+1}) - y(s_0) -\epsilon \geq c-5\epsilon.
\end{align}
On the other hand,
\begin{align*}\label{eq:M_1-prime-rate-function-on-the-other-hand}
y(s_i^+) - y(s_i^-)
&
\leq
(x(s_i^+) + \delta) - (x(s_i^-)-\delta) = x(s_i^+) - x(s_i^-) + 2\delta
\leq \xi^c(t(s_i^+)) - \xi^c(t(s_i^-)) + 2\delta
\\
&
\leq
\xi^c(r(s_i^+)+\delta) - \xi^c(r(s_i^-) - \delta) + 2\delta
\leq
\xi^c(T_i+\delta) - \xi^c(T_i - \delta) + 2\delta
\leq 2\epsilon + 2\delta \leq 4 \epsilon.
\end{align*}
That is, $(\zeta(T_i) - \zeta(T_i-))^{\alpha-1} = (y(s_i^+)-y(s_i^-))^{\alpha-1} \geq (4\epsilon)^{\alpha -1}$.
Combining this with \eqref{eq:M_1-prime-rate-function-on-the-one-hand},
\begin{align*}
I_{M_1'}(\zeta) \geq \sum_{i=1}^k \big(\zeta(T_i)-\zeta(T_i-)\big)^\alpha
=
\sum_{i=1}^k \big(\zeta(T_i)-\zeta(T_i-)\big)\big(\zeta(T_i)-\zeta(T_i-)\big)^{\alpha -1}
\geq
(c-5\epsilon)(4\epsilon)^{\alpha-1}
>a,
\end{align*}
which is contradictory to the assumption that $\zeta \in \Psi_{I_{M_1'}}(a)$.
Therefore, the claim that $\xi^c$ is bounded away from $\Psi_{I_{M_1'}}(a)$ by $\delta$ is proved.

Finally, suppose that $\xi^c\in D$.
That is, there exists $\{(z_i,u_i)\in \R_+ \times [0,1]\}_{i=1,\ldots}$ such that $\xi^c = \sum_{i=1}^\infty z_i \I_{[u_i,1]}$ where $u_i$'s are all distinct and $\sum_{i=1}^\infty z_i^\alpha  > a$.
Pick sufficiently large $k$ and sufficiently small $\delta>0$ such that $\sum_{i=1}^k (z_i - 2\delta)^\alpha> a$
%Suppose that $d_{M_1'}(\zeta, \xi^c) < \epsilon$.
%Let $k$ be such that $\sum_{i=k+1}^{\infty} z_i^\alpha < \epsilon$.
and $u_{i+1} - u_i > 2\delta$ for $i=1,\ldots, k-1$.
We claim that $d_{M_1'}(\zeta, \xi^c) \geq \delta$ for any $\zeta \in \Psi_{I_{M_1'}}(a)$.
Suppose not and there is $\zeta \in \Psi_{I_{M_1'}}(a)$ such that $\|x-y\|_\infty + \|t-r\|_\infty < \delta$ for some parametrizations $(x,t)\in \Gamma(\xi^c)$ and $(y,r)\in \Gamma(\zeta)$.
Note first that there are $s_i^+$'s and $s_i^-$'s for each $i=1,\ldots, k$ such that
$t(s_i^-) = t(s_i^+) = u_i$, $x(s_i^-) = \xi^c(u_i-)$, and $x(s_i^+) = \xi^c(u_i)$.
Since $y(s_i^+) \geq x(s_i^+) - \delta = \xi^c(u_i) - \delta$
and
$y(s_i^-) \leq x(s_i^-)+ \delta = \xi^c(u_i-) + \delta$,
$$
\zeta(r(s_i^+)) - \zeta(r(s_i^-)) \geq y(s_i^+) - y(s_i^-) \geq \xi^c (u_i) - \xi^c(u_i^-) - 2\delta = z_i - 2\delta.
$$
Note that by construction, $r(s_i^+) < t(s_i^+) +\delta  = u_i + \delta < u_{i+1} - \delta = t(s_{i+1}^-) - \delta < r(s_{i+1}^-)$ for each $i=1,\ldots, k-1$, and hence, along with the subadditivity of $x\mapsto x^\alpha$,
$$
I_{M_1'}(\zeta) = \sum_{t\in[0,1]} (\zeta(t) - \zeta(t-)\big)^\alpha \geq \sum_{i=1}^k [\zeta(r(s_i^+)) - \zeta(r(s_i^-)) ]^\alpha\geq \sum_{i=1}^k (z_i - 2\delta)^\alpha > a,
$$
which is contradictory to the assumption $\zeta\in \Psi_{I_{M_1'}}(a)$.
\end{proof}

%\section{subexponentially concave random variables}

%
%\begin{definition}[\cite{jelenkovic2003large}]
%	A non-negative random variable $X$ belongs to the sub-exponentially concave class if its hazard function $Q(x) \triangleq -\log\P(X>x)$ is eventually concave, such that, as $x \to \infty$,
%	$$Q(x)/\log x \to \infty,$$
%	and for $x \geq x_0, \ c x \leq u \leq x,$
%	$$\frac{Q(x)-Q(u)}{Q(x)} \leq \beta \frac{x-u}{x},$$
%where $x_0, \, 0<c<1, \ \text{and} \ 0<\beta<1$ are fixed constants.	
%\end{definition}
%
%\begin{result}[\cite{jelenkovic2003large}]\label{subconc_ineq}
%Let $X$ belong to the sub-exponentially concave class and $Q$ be its hazard function, then
%
%\begin{equation*}
%Q(x) \leq Q(u)(x/u)^{\beta} \ \text{for all} \ x \geq u \geq x_0.
%\end{equation*}
%	
%	\end{result}
%\section*{Acknowledgements}
%And this is an acknowledgements section with a heading that was produced by the
%$\backslash$section* command. Thank you all for helping me writing this
%\LaTeX\ sample file. See \ref{suppA} for the supplementary material example.

%\begin{supplement}
%\sname{Supplement A}\label{suppA}
%\stitle{Title of the Supplement A}
%\slink[url]{http://www.e-publications.org/ims/support/dowload/imsart-ims.zip}
%\sdescription{Dum esset rex in
%accubitu suo, nardus mea dedit odorem suavitatis. Quoniam confortavit
%seras portarum tuarum, benedixit filiis tuis in te. Qui posuit fines tuos}
%\end{supplement}

\bibliographystyle{imsart-nameyear}
\bibliography{BBRZ-REF.bib}

\end{document}